\DeclareMathOperator*{\id}{id}
\DeclareMathOperator*{\linh}{span}
\DeclareMathOperator*{\sgn}{sgn}
\DeclareMathOperator*{\argmax}{arg\,max}
\DeclareMathOperator*{\argmin}{arg\,min}
\DeclareMathOperator*{\conv}{conv}
\DeclareMathOperator*{\aff}{aff}
\DeclareMathOperator*{\pos}{pos}
\DeclareMathOperator*{\bd}{bd}
\DeclareMathOperator*{\cl}{cl}
\DeclareMathOperator*{\relbd}{relbd}
\newcommand{\MID}{\,\middle|\,}
\DeclarePairedDelimiter\scpr{\langle}{\rangle}
\DeclarePairedDelimiter{\norm}{\lVert}{\rVert}
\newcommand{\dotnorm}{\norm{\cdotbox}}
\newcommand{\NORM}[1]{\left\|#1\right\|}
\newcommand{\SCPR}[1]{\left\langle #1\right\rangle}
\DeclareMathOperator*{\Subl}{Subl}
\newcommand{\R}{\mathbb{R}}
\newcommand{\N}{\mathbb{N}}
\theoremstyle{theorem}
\newtheorem{theorem}{Theorem}[section] %
\newtheorem{corollary}[theorem]{Corollary}
\newtheorem{proposition}[theorem]{Proposition}
\newtheorem{lemma}[theorem]{Lemma}
\theoremstyle{definition}
\newtheorem{problem}[theorem]{Problem}
\newtheorem{definition}[theorem]{Definition}
\newtheorem{construction}[theorem]{Construction}
\theoremstyle{remark}
\newtheorem{remark}[theorem]{Remark}
\numberwithin{equation}{section}
\DeclareMathOperator*{\vertices}{vert}
\DeclareMathOperator*{\interior}{int}
\DeclareMathOperator*{\relint}{relint}
\DeclareMathOperator*{\vol}{vol}
\newcommand{\volt}[1]{{\textstyle\vol_{#1}}}
\DeclareMathOperator*{\tr}{tr}
\DeclareRobustCommand{\One}{\text{\usefont{U}{bbold}{m}{n}1}}
\newcommand{\transp}{{\mkern-1.5mu\mathsf{T}}}
\newcommand{\Kn}{\mathcal{K}^n}
\newcommand{\Knn}{\mathcal{K}_n^n}
\newcommand{\Kno}{\mathcal{K}_{o}^n}
\newcommand{\E}{\mathbb{E}}
\newcommand{\Lpm}{\mathsf{L}^\pm}
\DeclareMathOperator*{\C}{C}
\DeclareMathOperator*{\Sym}{Sym}
\newcommand{\GRS}{\mathsf{G}}
\DeclareMathOperator*{\Orthogonal}{O}
\DeclareMathOperator*{\GL}{GL}
\newcommand{\On}{\Orthogonal(n)}
\newcommand{\GLnR}{\GL(n,\R)}
\newcommand{\Symn}{\textstyle \Sym_n}
\newcommand{\sphere}{\mathbb{S}^{n-1}}
\newcommand{\dint}{\,\mathrm{d}}
\newcommand{\ddt}{\frac{\mathrm{d}}{\mathrm{d}t}}
\newcommand{\ddtn}[1]{\frac{\mathrm{d}^{#1}}{\mathrm{d}t^{#1}}}
\newcommand{\fwa}{\mathsf{F}}
\newcommand{\cdotbox}{\hspace{0.4mm}\cdot\hspace{0.4mm}}
\title{Shadow systems, decomposability and isotropic constants}
\author{Christian Kipp}
\keywords{shadow system, decomposability, isotropic constant, polytope}
\address{Technion -- Israel Institute of Technology, Department of Mathematics, Haifa 32000, Israel}
\email{ckipp@campus.technion.ac.il}
\begin{document}
	
	\begin{abstract}
		We study necessary conditions for local maximizers of the isotropic constant that are related to notions of decomposability. Our main result asserts that the polar body of a local maximizer of the isotropic constant can only have few Minkowski summands; more precisely, its dimension of decomposability is at most $\frac12(n^2+3n)$. Using a similar proof strategy, a result by Campi, Colesanti and Gronchi concerning RS-decomposability is extended to a larger class of shadow systems. We discuss the po\-ly\-to\-pal case, which turns out to have connections to (affine) rigidity theory, and investigate how the bound on the maximal number of irredundant summands can be improved if we restrict our attention to convex bodies with certain symmetries.
	\end{abstract}
	
	\maketitle
	
	\setcounter{tocdepth}{1}
	
	\tableofcontents
	
	\section{Introduction} \label{sect_introduction}

	Let $\Kn$, $\Knn$ and $\Kno$ denote the nested spaces of convex bodies in $\R^n$, full-dimensional convex bodies in $\R^n$, and full-dimensional convex bodies in $\R^n$ whose interior contains the origin, respectively. The \emph{centroid} $c_K$ and the \emph{covariance matrix} $A_K$ of a (full-dimensional) convex body $K \in \Knn$ are defined by $c_K \coloneqq \frac1{\vol (K)} \int_K x \dint x$ and
	\begin{equation} \label{eq_def_L_K}
		A_K \coloneqq \frac1{\vol (K)}\int_K (x-c_K) (x-c_K)^\transp \dint x \in \R^{n \times n},
	\end{equation}
	respectively. The \emph{isotropic constant} $L_K$ of $K$ is given by
	\begin{equation}
		L_K^{2n} = \frac{\det A_K}{\vol(K)^2}.
	\end{equation}
	Because $L_K$ is invariant under affine transformations of $K$, it is often enough to study the case where $K$ is \emph{isotropic} in the sense that $c_K$ is the origin and $A_K$ is the identity matrix.

	The famous \emph{slicing problem} in asymptotic geometric analysis is equivalent to the question whether $L_K$ is bounded from above by a universal constant. The problem was first studied in the 1980's by Bourgain \cite{bou86}, who showed in \cite{bou91} that $L_K \leq C \cdot \log n \cdot n^{\frac14}$. The log-factor was removed in 2006 by Klartag \cite{kla06}. More recently, Eldan's method of stochastic localization \cite{eld13} has led to spectacular progress on this notoriously hard problem. Starting with a breakthrough result by Chen \cite{che21}, a series of improvements of the upper bound on $L_K$ was achieved \cite{kl22,jls22} through progress on the related \emph{KLS conjecture}. Until December 2024, the best known bound, due to Klartag \cite{kla23}, was $L_K \leq C\cdot\sqrt{\log n}$. In December 2024, Guan uploaded a preprint \cite{gua24} to \texttt{arxiv.org}, whose title ``A note on Bourgain’s slicing problem'' turned out to be quite an understatement. Exactly one week later, Klartag and Lehec replied with a further preprint \cite{kl24}, revealing that a bound from Guan's preprint \cite[Lem.~2.1]{gua24} provided the ``missing link'' in a proof strategy they had developed a few years earlier, and showing how this leads to an affirmative solution of the slicing problem. Since the slicing problem has numerous deep connections to other fundamental questions in high-dimensional convex geometry (see \cite{km22,bgvv16,gar24}), the significance of this breakthrough can hardly be overstated.
	
	A natural follow-up question is to ask for a sharp upper bound on $L_K$ or, equivalently, for the global maximizers of $K\mapsto L_K$ in a given dimension $n \in \N$. With regard to this problem, a non-asymptotic version of the isotropic constant conjecture asserts that every convex body $K \in \Knn$ satisfies
	\begin{equation} \label{eq:intro:strong_isotropic_constant_conjecture}
		L_K \leq L_{\Delta_n} = \frac{\sqrt[n]{n!}}{(n+1)^\frac{n+1}{2n} \cdot \sqrt{n+2}},
	\end{equation}
	where $\Delta_n$ is an $n$-dimensional simplex. Since $L_{\Delta_n} \leq C$, this statement is sometimes called the \emph{strong isotropic constant conjecture}. As shown by Klartag \cite{kla18}, the strong isotropic constant conjecture implies the asymmetric version of the well-known Mahler conjecture.
	
	In dimension $n=2$, \eqref{eq:intro:strong_isotropic_constant_conjecture} follows from a result by Campi, Colesanti and Gronchi \cite{ccg99} and it is known that triangles are the only local maximizers of $K \mapsto L_K$, as shown by Saroglou \cite{sar10}. The affirmative resolution of the slicing problem notwithstanding, not much is known about local maximizers of $K \mapsto L_K$ in dimensions $n \geq 3$. In the following, we collect some known facts about such local maximizers.
	
	Campi, Colesanti and Gronchi \cite{ccg99} proved that a convex body $K\in \Knn$ whose boundary contains an open set $U$ which is $C^2$ with positive principal curvatures cannot be a local maximizer of the isotropic constant. Meyer and Reisner \cite{mr16} strengthened this result, showing the following.
	
	\begin{theorem}[Meyer-Reisner] \label{thm_mr}
		If the boundary of $K \in \Kno$ contains a %
		relatively open set $U$ that is strictly convex, then $K$ cannot be a local maximizer of the isotropic constant.
	\end{theorem}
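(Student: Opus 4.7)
The plan is to invoke the theory of shadow systems, the natural framework for studying affine invariants on $\Knn$. Given $K$ as in the statement, the aim is to construct a shadow system $\{K_t\}_{t \in (-\varepsilon, \varepsilon)}$ with $K_0 = K$ along which $t \mapsto L_{K_t}^{2n}$ is convex but nonconstant; convexity and nonconstancy together would then force $L_{K_t} > L_K$ for some small $t \ne 0$, contradicting the local maximality of $L_K$.

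Using the affine invariance of $L$, I would first place $K$ in isotropic position. I would then pick $x_0 \in U$ with an outward unit normal $e \in \sphere$ and build a shadow system in direction $e$ that slightly deforms $\partial K$ near $x_0$. Recall that a shadow system can be encoded by a convex body $\tilde{K} \subset \R^n \times \R$ whose projection onto the first $n$ coordinates is $K$, via $K_t = \{x + t s e : (x,s) \in \tilde{K}\}$; each $K_t$ is then automatically convex. The classical convexity results along shadow systems (going back to Rogers--Shephard, with later contributions by Campi, Colesanti and Gronchi) yield that $t \mapsto \det A_{K_t}$ is convex and, after a volume-preserving normalization, that $t \mapsto L_{K_t}^{2n}$ is convex as well.

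The crux of the argument is showing that this convex function is not constant on any neighborhood of $0$. Constancy along a shadow system typically forces the family $K_t$ to differ from $K_0$ only by volume-preserving linear shears parallel to $e$, since $L$ is affine invariant. The strict convexity of $U$ should rule out such a rigid identification: were $K_t$ merely a shear of $K_0$, the chord structure of $\partial K$ near $x_0$ in the direction $e$ would have to be compatible with a linear deformation, and a careful examination shows this compatibility forces the existence of a line segment in $U$, contradicting the assumption. Hence a shadow system whose encoding body $\tilde{K}$ is non-cylindrical above $x_0$ yields a genuinely varying $t \mapsto L_{K_t}^{2n}$.

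I expect the principal obstacle to lie precisely in this nonconstancy argument, because strict convexity is strictly weaker than the $C^2$-plus-positive-Gauss-curvature hypothesis used by Campi--Colesanti--Gronchi, which already permitted an explicit second-order perturbation computation. A rigorous implementation would likely either compute $\left.\ddtn{2} \det A_{K_t}\right|_{t=0}$ directly and extract strict positivity from the strict convexity of $U$ via a local analysis of the support function of $K$ near $x_0$, or proceed by approximating $K$ with bodies of $C^2$ boundary with positive Gauss curvature on a piece contained in $U$; the latter approach is delicate because local maximality need not pass to Hausdorff limits, so stability of the increase in $L_{K_t}$ has to be tracked quantitatively.
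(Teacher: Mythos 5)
The paper does not prove Theorem~\ref{thm_mr} directly; it cites Meyer--Reisner and then observes (in Sections~\ref{sect_introduction} and~\ref{sect_minkowski_decomposability}) that Theorem~\ref{thm_minkowski_summands} re-derives it: a strictly convex open piece of $\bd K$ forces, by \cite{mr16} and Sallee's result, $\dim\mathcal{S}(K^\circ)=\infty$, which violates the $\tfrac{n^2+3n}{2}$ bound for local maximizers. That argument lives entirely on the polar side. Your proposal works on the primal side via shadow systems, so the route is genuinely different from the paper's --- but it contains a gap you flag without closing.

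The central problem is the assertion that $t\mapsto\det A_{K_t}$, and hence $t\mapsto L_{K_t}^{2n}$, is convex along an \emph{arbitrary} shadow system. Rogers--Shephard gives convexity of $t\mapsto\vol(K_t)$, but the convexity of $L_{K_t}^{2n}$ used in the paper (Theorem~\ref{thm_ccg}, due to Campi--Colesanti--Gronchi) holds only for RS-movements, i.e.\ shadow systems whose speed function is constant on every chord parallel to $u$, and the \emph{strictness} of that convexity additionally requires the speed function to be non-affine, that is, $K$ must be RS-decomposable in direction $u$. Your construction stipulates merely that the encoding body $\tilde K$ be non-cylindrical above $x_0$; that does not enforce the chord-constant structure, so the CCG convexity statement is not available, and the general-shadow-system version you invoke is not established. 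To make this route rigorous you would need to produce a non-affine speed function $\beta$ on $\pi_{u^\bot}(K)$, supported near $\pi_{u^\bot}(x_0)$, with $f_{K,u}\pm\beta$ and $g_{K,u}\pm\beta$ all convex, and this is exactly where strict convexity of $U$ must do work: it creates second-order slack in the graph of $f_{K,u}$ near $\pi_{u^\bot}(x_0)$ so that a small non-affine bump preserves convexity. That construction is the substance of the proof and is absent from the sketch. You correctly identify that strict convexity is weaker than the $C^2$-plus-positive-Gauss-curvature hypothesis (which makes the explicit second-derivative computation trivial), and you correctly reject the $C^2$-approximation route on the grounds that local maximality is not closed under Hausdorff limits; what remains missing is precisely the RS-decomposability of $K$ under the strict-convexity hypothesis.
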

	
	Actually, Meyer and Reisner showed a much stronger statement in \cite{mr16}, namely, that even a single point $x \in \bd K$ of strict convexity and with positive generalized Gauss curvature suffices to rule out that $K$ is a local maximizer. With regard to the polytopal case, Rademacher proved in \cite{rad16} that a simplicial polytope $P$ that maximizes the isotropic constant has to be a simplex. In \cite{kip25}, this statement was strengthened as follows.
	
	\begin{theorem} \label{thm_simplicial_vertex}
		A polytope $P$ has to be a simplex if it is a local maximizer of $K \mapsto L_K$ and it has a simplicial vertex, i.e., a vertex $v$ such that every facet $F \subset P$ with $v \in F$ is a simplex.
	\end{theorem}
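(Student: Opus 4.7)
The plan is to perturb the simplicial vertex $v_0$ via a multi-parameter shadow system, use Campi--Gronchi convexity together with local maximality to obtain a polynomial rigidity identity, and then extract from this identity that $P$ must be a simplex.

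Let $v_0$ denote the simplicial vertex and $v_1, \ldots, v_m$ the remaining vertices of $P$. For $u \in \R^n$ I set $K_u := \conv\{v_0 + u, v_1, \ldots, v_m\}$. Because every facet of $P$ at $v_0$ is an $(n-1)$-simplex, moving $v_0$ to $v_0 + u$ does not break any facet at $v_0$ into non-coplanar pieces, so for $u$ in a small neighborhood $U$ of $0 \in \R^n$ the combinatorial type of $K_u$ agrees with that of $P$. Moreover, the portion of $K_u$ near $v_0 + u$ admits a triangulation into $n$-simplices with common apex $v_0 + u$, one per $(n-1)$-simplex in a fixed triangulation of the vertex figure; this is where the simplicial-vertex hypothesis is essential.

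For each direction $w \in \R^n$, $t \mapsto K_{tw}$ is a Rogers--Shephard shadow system through $P = K_0$. By the classical convexity theorem of Campi--Gronchi, $t \mapsto L_{K_{tw}}^{-2n}$ is convex, and local maximality of $L$ at $P$ makes $t = 0$ a local minimum of this convex function, forcing it to be constant near $0$. Since this holds for every $w$, the map $u \mapsto L_{K_u}$ is constant on $U$. Using the triangulation about $v_0 + u$, the volume $\vol(K_u)$ is affine in $u$, the first-moment vector $\int_{K_u} x \dint x$ has degree at most $2$, and the second-moment matrix $\int_{K_u} xx^{\transp} \dint x$ has degree at most $3$. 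Assembling them into the $(n+1) \times (n+1)$ block matrix
\[
    \mathbf M_{K_u} := \begin{pmatrix} \vol(K_u) & \int_{K_u} x^{\transp} \dint x \\[2pt] \int_{K_u} x \dint x & \int_{K_u} xx^{\transp} \dint x \end{pmatrix},
\]
a Schur-complement computation gives $\det \mathbf M_{K_u} = L_{K_u}^{2n}\,\vol(K_u)^{n+3}$. Combined with the local constancy of $L_{K_u}$, this yields the polynomial identity
\[
    \det \mathbf M_{K_u} \;=\; L_P^{2n}\,\vol(K_u)^{n+3},
\]
which holds on $U$ and hence globally as an identity of polynomials in $u$.

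The main obstacle is the final step of converting this identity into the conclusion that $P$ is a simplex. The right-hand side has degree exactly $n+3$ in $u$, whereas $\det \mathbf M_{K_u}$ a priori has degree up to $3n+1$, so all coefficients of $u$-monomials of degree strictly greater than $n+3$ on the left must vanish. Expanding these high-degree coefficients via Leibniz's formula and the simplex decomposition at $v_0$ produces a system of determinantal relations among $v_0, v_1, \ldots, v_m$. The plan is to exploit the simplicial structure at $v_0$ to show that this system is inconsistent unless $m = n$ and $v_0, v_1, \ldots, v_n$ are affinely independent, i.e., unless $P$ is the $n$-simplex $\conv\{v_0, v_1, \ldots, v_n\}$. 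Carrying out this combinatorial-algebraic reduction is the delicate heart of the proof.
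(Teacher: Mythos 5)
There is a fundamental gap at the step where you invoke convexity of the isotropic constant along your shadow systems. The convexity result of Campi--Colesanti--Gronchi (Theorem \ref{thm_ccg}, from \cite{ccg99}) is not about arbitrary shadow systems: it applies specifically to \emph{RS-movements}, where the speed function $\beta$ is constant on each chord of $K$ in direction $u$, and it asserts that $t \mapsto L_{K_t}^{2n}$ is \emph{strictly convex} (not that $L^{-2n}$ is convex). Moving the simplicial vertex $v_0$ to $v_0 + tw$ and re-taking the convex hull produces a shadow system, but one whose speed function is only \emph{affine}, not constant, along chords meeting the facets at $v_0$ (the speed at a boundary point of $F_i$ is the barycentric coordinate of $v_0$, which varies). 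So it is not an RS-movement and Theorem \ref{thm_ccg} does not apply. No convexity result for $L_{K_t}^{\pm 2n}$ along general shadow systems is known; indeed the whole point of Section \ref{sect_shadow_systems} of this paper is to extend the RS-movement criterion to a broader class (generalized RS-movements), and even there the conclusion is conditional: Proposition \ref{prop_generalized_RS_movements} requires that the first derivatives $\left.\ddt\int_{K_t}x_ix_j\dint x\right|_{t=0}$ and $\left.\ddt\int_{K_t}x_i\dint x\right|_{t=0}$ all vanish, which must be arranged by a separate dimension-counting argument (Corollary \ref{cor_generalized_RS_movements_dim}). Your proposal skips this entirely.

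There are two further difficulties even on your own terms. First, your inference ``convex with a local minimum at $t=0$, hence constant near $0$'' is false ($t\mapsto t^2$). The correct argument would use \emph{strict} convexity of $L^{2n}$, which immediately forbids an interior local maximum without any constancy step or polynomial identity. That the proposal then proceeds via a long polynomial-degree argument suggests a misremembering of the shape of the convexity statement; if the needed convexity held, the conclusion would already be in hand. Second, the final ``combinatorial-algebraic reduction'' that you describe as ``the delicate heart of the proof'' is left completely open, so even accepting all earlier steps the proposal does not prove the theorem. The actual proof in \cite{kip24_2} works by exhibiting a genuine RS-movement (or a movement satisfying the extra vanishing conditions) using the flexibility afforded by the simplicial vertex, and then citing the strict convexity of $L^{2n}$ along it.
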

	
	Moreover, it was shown in \cite{kip25} that a zonotope that maximizes the isotropic constant in the space of centrally symmetric convex bodies and that has a cubical zone (a zone that contains only parallelepipeds) must be a cube \cite[Thm.~1.4]{kip25}.
		
	Intuitively speaking, a common theme of the aforementioned results is that a local maximizer of $K \mapsto L_K$ cannot be too ``flexible'' or, equivalently, has to be sufficiently ``rigid'' to rule out modifications that provably increase the isotropic constant. A goal of the present article is to substantiate this intuition.
	
	The overarching theme of our investigation will be notions of decomposability of a convex body.  %
	In this article, a notion of decomposability corresponds to an embedding of a class of convex bodies $\mathcal{A} \subset \Kn$ into a function space $V$, which assigns to each convex body $K \in \mathcal{A}$ a function $f_K$ (e.g., its support function $h_K$ or its gauge function $\dotnorm_K$). In the cases under consideration, the image of the embedding $A\coloneqq \{f_K \mid K \in \mathcal{A}\}$ will be convex. Given $f_K$, we consider the lineality space of the support cone of $A$ at $f_K$, which is given by
	\begin{equation}
		\Lpm(f_K,A) \coloneqq \{g \in V \mid \exists \varepsilon>0 \colon f_K+\varepsilon g,f_K-\varepsilon g \in A\}.
	\end{equation}
	The dimension of the vector space $\Lpm(f_K,A)$ can be interpreted as a quantitative measure of the decomposability of $K$, i.e., the ``number of substantially different ways'' in which we can write $f_K$ as a convex combination $f_K = \frac12 (f_L+f_M)$ with $L,M \in \mathcal{A}$. Our goal is to prove necessary conditions for local maximizers of $K\mapsto L_K$ of the form:
	\begin{quotation}
		If $\dim\Lpm(f_K,A)$ is ``too large'', then $K$ cannot be a local maximizer of the isotropic constant.
	\end{quotation}
	
	We first study the notion of RS-decomposability, due to Campi, Colesanti and Gronchi \cite{ccg99}. This notion provides a versatile tool for determining structural properties of local maximizers of $K\mapsto L_K$; it is used to prove the mentioned results in \cite{ccg99,rad16,kip25} and can be used to prove Theorem \ref{thm_mr}. Let $K \in \Knn$ and $u \in \sphere$.  We consider the set of full-dimensional convex bodies that have the same X-ray \cite[Ch.~2]{gar06} in direction $u$ as $K$:
	\begin{equation}
		\mathcal{K}_{K,u} \coloneqq \{\tilde{K} \in \Knn \mid \volt{1}[(x+\linh\{u\})\cap \tilde{K}]=\volt{1}[(x+\linh\{u\})\cap K]\text{ for all } x\in \R^n\}.
	\end{equation}
	Let $\pi_{u^\bot}$ denote the orthogonal projection onto the orthogonal complement $u^\bot$ of the line $\linh \{u\}$. For every $L \in \mathcal{K}_{K,u}$, there exist two convex functions $f_{L,u},g_{L,u} \colon \pi_{u^\bot}(K) \rightarrow \R$ with
	\begin{equation} \label{eq_def_f_and_g}
		L = \left\{x \in \pi_{u^\bot}^{-1}(K)  \MID f_{L,u}(\pi_{u^\bot}(x))\leq \scpr{x,u} \leq -g_{L,u}(\pi_{u^\bot}(x))\right\}.
	\end{equation}
	The process of assigning the functions $f_{K,u}$ and $g_{K,u}$ to $K$ can be interpreted as embedding the set $\mathcal{K}_{K,u}$ into the vector space of %
	functions $(\pi_{u^\bot}(K) \times \{0,1\}) \rightarrow \R$. %
	This embedding entails a notion of decomposability, which is the content of the following definition.
	\begin{definition} \label{def_RS_decomposable}
		A convex body $K \in \Knn$ is called \emph{RS-decomposable in  direction $u \in \sphere$} if there exists a non-affine \emph{speed function} $\beta \colon \pi_{u^\bot}(K) \rightarrow \R$ such that the functions
		\begin{equation}
			f_{K,u}+\beta,\quad f_{K,u}-\beta,\quad g_{K,u}+\beta,\quad g_{K,u}-\beta
		\end{equation}
		are all convex.
	\end{definition}
	Geometrically speaking, a convex body $K \in \Knn$ is RS-decomposable if there exists a direction $u$ and two convex bodies $K_1,K_2 \in \mathcal{K}_{K,u}$, none of which is an affine image of $K$, such that $f_{K,u}=\frac12(f_{K_1,u}+f_{K_2,u})$ and $g_{K,u}=\frac12(g_{K_1,u}+g_{K_2,u})$. %
	The following powerful result from \cite{ccg99} asserts that a convex body $K$ that admits such a decomposition cannot be a local maximizer of the isotropic constant.
	
	\begin{theorem}[Campi-Colesanti-Gronchi] \label{thm_ccg}
		Let $K \in \Knn$ be RS-decomposable in direction $u \in \sphere$ with speed function $\beta$. For $t \in [-1,1]$, let $K_t$ be the unique convex body in $\mathcal{K}_{K,u}$ with
		\begin{equation}
			f_{K_t,u} = f_{K,u} +t\cdot \beta \quad \text{and} \quad  g_{K_t,u} = g_{K,u} -t\cdot \beta.
		\end{equation}
		Then $t \mapsto L_{K_t}^{2n}$ is strictly convex. In particular, $K$ is not a local maximizer of the isotropic constant.
	\end{theorem}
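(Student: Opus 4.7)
My plan is to compute $L_{K_t}^{2n}$ explicitly as a polynomial in $t$ and read off strict convexity from the coefficient of $t^2$. First, choose coordinates so that $u = e_n$ and write points of $\R^n$ as $(y, s)$ with $y \in \R^{n-1}$ and $s \in \R$. Inspection of \eqref{eq_def_f_and_g} shows that $K_t$ is the image of $K$ under the shear
\begin{equation}
    \phi_t \colon (y, s) \mapsto (y,\, s + t\beta(y)),
\end{equation}
since $s \in [f_{K,u}(y), -g_{K,u}(y)]$ if and only if $s + t\beta(y) \in [f_{K_t,u}(y), -g_{K_t,u}(y)]$. The map $\phi_t$ has Jacobian determinant $1$, so $\vol(K_t) = \vol(K)$ for all $t$. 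Strict convexity of $t \mapsto L_{K_t}^{2n}$ is therefore equivalent to strict convexity of $t \mapsto \det A_{K_t}$.

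Second, I would transport the integrals defining $c_{K_t}$ and $A_{K_t}$ back to $K$ via $\phi_t$. Letting $(Y, S)$ denote a random point with uniform distribution on $K$, the push-forward by $\phi_t$ shows that $(Y, S + t\beta(Y))$ is uniform on $K_t$. A short calculation then gives the block decomposition
\begin{equation}
    A_{K_t} = \begin{pmatrix} M & b_0 + tb \\ (b_0 + tb)^\transp & a_0 + 2tc + t^2 d \end{pmatrix},
\end{equation}
where $M \in \R^{(n-1) \times (n-1)}$, $b_0 \in \R^{n-1}$, $a_0 \in \R$ are the blocks of $A_K$, the vector $b$ is the covariance of $Y$ with $\beta(Y)$, the scalar $c$ is the covariance of $S$ with $\beta(Y)$, and $d = \mathrm{Var}(\beta(Y))$.

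Third, applying the Schur complement formula yields
\begin{equation}
    \det A_{K_t} = \det M \cdot \bigl[(a_0 - b_0^\transp M^{-1} b_0) + 2t(c - b_0^\transp M^{-1} b) + t^2(d - b^\transp M^{-1} b)\bigr],
\end{equation}
a polynomial of degree at most two in $t$. Since $K$ is full-dimensional, $M$ is positive definite, so the sign of the leading coefficient coincides with the sign of $d - b^\transp M^{-1} b$. The key step, which I expect to be the main technical obstacle, is to identify this quantity as the minimum of $\E[(\beta(Y) - \ell(Y))^2]$ over affine functions $\ell \colon u^\bot \to \R$, i.e.\ as the squared $L^2$-residual of the best affine approximation to $\beta$ as a function of $y$. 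Because $\beta$ is non-affine by assumption, this residual is strictly positive, so $\det A_{K_t}$, and hence $L_{K_t}^{2n}$, is strictly convex in $t$. The ``in particular'' conclusion then follows, since strict convexity on $[-1, 1]$ forces $L_K^{2n} < \max(L_{K_\varepsilon}^{2n}, L_{K_{-\varepsilon}}^{2n})$ for every $\varepsilon > 0$, while $K_{\pm \varepsilon}$ converges to $K$ as $\varepsilon \to 0$.
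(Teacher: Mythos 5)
Your proof is correct, and it takes a genuinely different route from the one in this paper. You observe that $K_t = \phi_t(K)$ for the measure-preserving shear $\phi_t(y,s) = (y,\,s + t\beta(y))$, push the uniform measure on $K$ forward, read off $A_{K_t}$ in block form, and use the Schur complement to exhibit $\det A_{K_t}$ (and hence $L_{K_t}^{2n}$, since $\vol K_t$ is constant) as an explicit quadratic polynomial in $t$ with leading coefficient $\det M \cdot (d - b^\transp M^{-1} b)$. The step you flag as the technical obstacle is in fact standard least-squares linear algebra: for a random vector $Y$ with covariance $M \succ 0$ (which holds because $K$ is full-dimensional, so the leading principal block of $A_K$ is positive definite) and a square-integrable scalar $Z = \beta(Y)$, the quantity $d - b^\transp M^{-1} b$ is exactly $\min_\ell \E\bigl[(Z - \ell(Y))^2\bigr]$ over affine $\ell$, attained at $\ell(y) = \E[Z] + b^\transp M^{-1}(y - \E[Y])$. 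Since $Y$ has a density with full-dimensional support on $\pi_{u^\bot}(K)$ and $\beta$ is continuous on the relative interior (as a difference of convex functions), this residual vanishes only if $\beta$ is affine, which is excluded; so the leading coefficient is strictly positive and you get strict convexity on all of $[-1,1]$ in one stroke.

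By contrast, the paper's argument is local and goes through the second-derivative formula of Proposition \ref{prop_L_K_second_derivative}: at each $s \in [-1,1]$ one normalizes so $K_s$ is isotropic, runs a dimension count on the $(n+1)$-dimensional space spanned by $\tilde\beta$ and the affine functions to find a modified speed function $\alpha$ that annihilates the first-order variations of the mixed moments, uses affine invariance to replace $K_t$ by $K_t^\alpha$, and then invokes Lemma \ref{lemma_generalized_RS_movement_second_derivative} to get strict positivity of the second derivative at $s$. This is heavier machinery, but it is deliberately set up so that the same lemmas carry over to the generalized RS-movements of Proposition \ref{prop_generalized_RS_movements} and to the polar Minkowski-summand argument of Section \ref{sect_minkowski_decomposability}, where your shear-and-Schur approach does not apply: for a generalized RS-movement the speed depends on $s$ as well as $y$, so the corresponding map is no longer a volume-preserving fiberwise translation and $L_{K_t}^{2n}$ is no longer a quadratic polynomial. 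Your proof is the cleaner argument for Theorem \ref{thm_ccg} itself; the paper's is the one that generalizes.
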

	Following \cite{ccg99}, we call a one-parameter family of convex bodies $t \mapsto K_t$ as in Theorem \ref{thm_ccg} an \emph{RS-movement} of $K$. In Section \ref{sect_shadow_systems}, we give a new proof of Theorem \ref{thm_ccg}, based on a computation of the second derivative of $K \mapsto L_K$ (which is done in Section \ref{sect_second_derivative}). 
	Essentially as a byproduct of our new proof for Theorem \ref{thm_ccg}, we obtain a new necessary condition for local maximizers of $K \mapsto L_K$ involving certain variations of $K$ which we call \emph{generalized RS-movements}. Because the definition of generalized RS-movements and the corresponding necessary condition for local maximizers are somewhat technical, we defer them to Section \ref{sect_shadow_systems}.
	
	The second notion of decomposability that we study results from identifying convex bodies with their gauge functions or, equivalently, with the support functions of their polar bodies. We recall that a function $h \colon \R^n \rightarrow \R$ is called \emph{positively homogeneous} if we have $h(\lambda x) = \lambda h(x)$ for all $\lambda \geq 0$, $x \in \R^n$.
	A function is called \emph{sublinear} if it is positively homogeneous and convex. Every sublinear function is continuous. We denote the convex cone of all sublinear functions on $\R^n$ by $\Subl(\R^n)$. Every convex body $K \in \Kn$ induces a sublinear function $h_K$ via
	\begin{equation}
		h_K(x) = \sup_{y \in K} \scpr{x,y} \quad \text{for } x \in \R^n.
	\end{equation}
	The function $h_K$ is called the \emph{support function} of $K$. Since every sublinear function $\R \rightarrow \R$ is the support function of a unique convex body, there is a one-to-one correspondence between convex bodies and sublinear functions \cite[Sect.~1.7]{sch13}. Support functions are the natural functional representatives of convex bodies in the context of the Brunn-Minkowski theory because the vector addition given by the embedding $\Kn \rightarrow \Subl(\R^n)$, $K \mapsto h_K$ is the Minkowski sum $K+L \coloneqq \{x+y \mid x \in K, y \in L\}$. A convex body $L\in \Kn$ is called a \emph{Minkowski summand} of $K\in \Kn$ if there exists a decomposition
	\begin{equation}
		K = L+M
	\end{equation}
	with $M \in \Kn$. We denote the convex cone of convex bodies that are homothetic to (i.e., result from scaling and translating) a Minkowski summand of $K$ by $\mathcal{S}(K)$ and call the dimension of this cone the \emph{dimension of decomposability} of $K$.%
	
	Now let $K \in \Kno$. The \emph{gauge function} of $K$ is the sublinear function
	\begin{equation}
		\norm{x}_K \coloneqq  \min \{\lambda \geq 0 \mid  x \in \lambda K\}.
	\end{equation}
	Equivalently, $\dotnorm_K$ is the unique asymmetric norm on $\R^n$ whose unit ball is $K$. As a sublinear function, $\dotnorm_K$ is also the support function of a convex body $K^\circ$, which is called the \emph{polar body} of $K$. The map $K \mapsto K^\circ$ defines an involution on $\Kno$. If we identify convex bodies with their gauge functions, the analogue of an RS-movement is a linear interpolant of gauge functions, i.e., a family $(K_t)_{t \in [0,1]}$ given by an equation of the form $\dotnorm_{K_t} = (1-t) \dotnorm_{K_{0}} +t \dotnorm_{K_{1}}$.
	
	In Section \ref{sect_minkowski_decomposability}, we prove our main result, which asserts that the polar body $K^\circ$ of a local maximizer $K$ of the isotropic constant has few Minkowski summands.
	
	\begin{theorem} \label{thm_minkowski_summands}
		Let $K \in \Kno$ be a local maximizer of the isotropic constant. Then the dimension of decomposability of $K^\circ$ satisfies
		\begin{equation}
			\dim \mathcal{S}(K^\circ) \leq \frac{n^2+3n}{2}.
		\end{equation}
	\end{theorem}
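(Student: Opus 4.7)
The plan is to combine the second-variation machinery of Section \ref{sect_second_derivative} with a dimension count in the target of the covariance-centroid linearization. First I would place $K$ in isotropic position ($c_K = 0$, $A_K = I$) and set $V := \vol(K)$; this is legitimate because both $L_K$ and $\dim\mathcal{S}(K^\circ)$ are invariant under $\GL(n,\R)$. The lineality space $\Lpm(h_{K^\circ},A)$ is canonically identified with the linear span of the cone of homothety classes of Minkowski summands, so its dimension equals $d := \dim \mathcal{S}(K^\circ)$. For each $g$ in this space, the perturbation $h_{K^\circ_t} = h_{K^\circ} + tg$ defines a one-parameter family of polar bodies in $\Kno$ for $|t|$ small, and hence a path $K_t = (K^\circ_t)^\circ$ in $\Kno$.

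Next I would consider the linear map
\begin{equation*}
\phi \colon \Lpm(h_{K^\circ},A) \longrightarrow \Sym_n \times \R^n,\qquad g \longmapsto \left(\ddt\Big|_{t=0} A_{K_t},\;\; \ddt\Big|_{t=0} c_{K_t}\right),
\end{equation*}
which records the first-order variations of covariance and centroid along the path associated with $g$. The codomain has dimension $\binom{n+1}{2} + n = \frac{n^2+3n}{2}$. Suppose, toward a contradiction, that $d > \frac{n^2+3n}{2}$. Then $\ker\phi$ contains a nonzero $g$; for this $g$ both $\dot A := \ddt|_{t=0} A_{K_t} = 0$ and $\dot c := \ddt|_{t=0} c_{K_t} = 0$, so $K_t$ stays isotropic to first order. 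The first-order optimality condition $\ddt|_{t=0} L_{K_t}^{2n} = 0$, combined with the identity $L_{K_t}^{2n} = \det A_{K_t}/\vol(K_t)^2$ and $\dot A = 0$, then forces $\dot V := \ddt|_{t=0} \vol(K_t) = 0$ as well.

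The third step is to invoke the second-variation formula from Section \ref{sect_second_derivative} and to use the cancellations $\dot A = \dot c = \dot V = 0$ to reduce $\ddtn{2}|_{t=0} L_{K_t}^{2n}$ to a residual quadratic form $Q(g)$. The aim is to show that $Q(g) > 0$ for every nonzero $g \in \ker\phi$, which would contradict that $K$ is a local maximizer. Strict positivity should come from a sharp Brunn--Minkowski--type convexity along Minkowski-linear paths of support functions of the polar body, in analogy with the strict convexity exploited by Campi--Colesanti--Gronchi in the proof of Theorem \ref{thm_ccg}.

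The main obstacle is the strict-positivity claim for $Q$ on $\ker\phi$. Semi-definiteness follows for free from the local max hypothesis, but upgrading it to strict positivity rules out complete cancellation for atypical $g$ only if a sharp convexity argument is available. I expect this step to hinge on a careful analysis of how the polarity transform interacts with Minkowski-linear paths of support functions, combined with an integration-by-parts identity on $K$, so as to extract an unambiguous positive contribution from the residual second-order data.
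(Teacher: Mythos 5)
Your setup is essentially the paper's: place $K$ in isotropic position, identify $\dim\mathcal{S}(K^\circ)$ with $\dim\Lpm(\norm{\cdot}_K)$ (the paper's Lemma \ref{lemma_L_is_span_of_S}), perturb the gauge function linearly, and map into $\Sym_n\times\R^n$ via first-order variations. Your derivation that $\dot A=\dot c=0$ forces $\dot V=0$ on the kernel (via the first-order optimality condition) is also correct and matches the role played by the paper's linear map $A$. Up to this point you have reproduced the right skeleton.

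The gap is exactly the one you flag: you do not supply the argument that the residual second-order term is strictly positive, and your guess about how to supply it (a ``sharp Brunn--Minkowski-type convexity'' plus integration by parts) is not the mechanism the paper uses. The actual mechanism is more specific and is worth spelling out because it is not something you would guess from convexity alone. First, the paper computes explicitly (Lemma \ref{lemma_K_t_f_is_an_admissible_variation}) that for the gauge-linear family $K_t^f$ one has the \emph{strict} lower bound
\begin{equation}
\left.\ddtn{2}\int_{K_t^f}\bigl[\norm{x}^2-(n+2)\bigr]\dint x\right|_{t=0}
\;>\; (n+3)\int_{\sphere} f(\rho(K,u)u)^2\bigl[\rho(K,u)^2-(n+2)\bigr]\rho(K,u)^{n}\dint u,
\end{equation}
where the strict inequality comes from comparing the coefficients $(n+3)$ and $(n+1)$ in the radial formulas for $\ddtn{2}\int\norm{x}^2$ and $\ddtn{2}\vol$; this is purely computational and has no sign information by itself. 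The crucial second ingredient (Lemma \ref{lemma_f_squared_construction}) is a \emph{different, one-sided} auxiliary family $\tilde K_t$ obtained by perturbing the gauge function \emph{quadratically} by $t\,f^2$. One computes that the first derivative
\begin{equation}
\left.\ddt\int_{\tilde K_t}\bigl[\norm{x}^2-(n+2)\bigr]\dint x\right|_{t=0}
\;=\;-\int_{\sphere} f(\rho(K,u)u)^2\bigl[\rho(K,u)^2-(n+2)\bigr]\rho(K,u)^{n}\dint u
\end{equation}
is exactly the negative of the spherical integral appearing above. Applying the one-sided first-order necessary condition (Corollary \ref{cor_first_order_condition}) for a local maximizer to $\tilde K_t$ then forces the spherical integral to be $\geq 0$, which combined with the strict lower bound gives $\ddtn{2}L_{K_t^f}^{2n}|_{t=0}>0$, a contradiction. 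So the positivity is not extracted from a convexity inequality on the family $K_t^f$ itself, but by recognizing the bad term as the first-order defect of a second, $f^2$-driven family and letting the local-max hypothesis kill it. Without this auxiliary construction your proposed quadratic form $Q(g)$ need not be positive on $\ker\phi$, and the argument does not close.
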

	
	Recalling Theorem \ref{thm_mr} and Theorem \ref{thm_simplicial_vertex} and the related results from the beginning of this section, let us shortly discuss how Theorem \ref{thm_minkowski_summands} illustrates a common underlying geometric phenomenon of these facts. We first note that if the boundary of a local maximizer $K$ contains a strictly convex open set $U$, then \cite[Lem.~5]{mr16} in combination with a result by Sallee \cite[Lem.~4.1]{sal72} implies that $\dim \mathcal{S}(K^\circ)=\infty$. In this sense, our result provides an alternative proof for Theorem \ref{thm_mr}. With regard to Theorem \ref{thm_simplicial_vertex}, it is known that if $P$ is a polytope, %
	then $\dim\mathcal{S}(P^\circ)$ can be expressed in terms of the affine dependences of the vertices on each facet $F \subset P$. If we partition the boundary complex of $P$ into connected components by removing all facets of $P$ that are simplices (but keeping their vertices), then Theorem \ref{thm_minkowski_summands} implies that there cannot be ``too many'' of these connected components. This phenomenon will be discussed in more detail in Section \ref{sect_the_polytopal_case}, where we analyze the polytopal case of Theorem \ref{thm_minkowski_summands}. Concerning the class of zonotopes, we note in passing that $\dim\mathcal{S}(Z^\circ)$ is related to the number of cubical zones of a zonotope $Z \in \Kno$ (this follows from the correspondence between Minkowski summands of the polar body and facewise affine maps, which is discussed in Section \ref{sect_the_polytopal_case}).
	
	In Section \ref{sect_symmetric_convex_bodies}, we consider classes of convex bodies with certain symmetries. In particular, we consider centrally symmetric, unconditional and 1-symmetric convex bodies. It turns out that if we restrict our attention to convex bodies with symmetries, then the upper bound on the dimension of decomposability of the polar body can be improved.
	
	The article ends with some concluding remarks in Section \ref{sect_concluding_remarks}, where we discuss whether the bound in Theorem \ref{thm_minkowski_summands} can be improved in general.
	
	\section{Derivatives of the isotropic constant} \label{sect_second_derivative}

	The main goal of this section is to compute the second derivative of $K \mapsto L_K$. As a starting point, we reproduce Rademacher's computation of the first derivative. Rademacher's computation \cite{rad16} yields the derivative of $P_t \mapsto L_{P_t}$ as a facet of a simplicial polytope $P_0$ is hinged around one of its ridges. In order to state the result slightly more generally, we recall a notion of weak differentiability of convex-body-valued maps that was introduced and studied by Pflug and Weisshaupt \cite{pfl96,wei01,pw05}. Here and in the following, we use the term \emph{signed measure} to denote a linear combination of Borel probability measures on $\R^n$ and write $C(\R^n)$ for the vector space of continuous functions $\R^n \rightarrow \R$, equipped with the supremum norm.
	
	\begin{definition} \label{def_weak_derivative}
		A map $[t_1,t_2] \rightarrow \Knn$, $t \mapsto K_t$ is called \emph{weakly differentiable} at $t^* \in [t_1,t_2]$ if there exists a continuous linear functional $\mu \colon C(\R^n) \rightarrow \R$ with
		\begin{equation} \label{eq_def_weak_derivative}
			\left.\ddt \int_{K_t} g(x) \dint x \right|_{t=t^*} = \mu(g) \quad \text{for all } g \in C(\R^n).
		\end{equation}
		By the Riesz representation theorem, the linear functional $\mu$ can be identified with a signed measure (which happens to be supported on $\bd K_{t^*}$). The linear functional $\mu$ is called the \emph{weak derivative} of $t \mapsto K_t$ at $t^*$.
	\end{definition}
	
	We note that a map $[t_1,t_2] \rightarrow \Knn$ can be weakly differentiable at the boundary points $t_1$ and $t_2$ of its domain. In these cases, \eqref{eq_def_weak_derivative} refers to the one-sided derivative from the right or the left, respectively.
	
	Using the terminology from Definition \ref{def_weak_derivative}, Rademacher's result \cite[Lem.~5]{rad16} reads as follows. We recall from Section \ref{sect_introduction} that $K \in \Knn$ is isotropic if $c_K=0$ and $A_K=I_n$.
	
	\begin{proposition}[Rademacher] \label{prop_rademacher} Let $[-1,1] \rightarrow \Knn$, $t \mapsto K_t$ be weakly differentiable at $t=0$. %
	If $K_0$ is isotropic, then $t \mapsto L_{K_t}^{2n}$ is differentiable at $t=0$ with
		\begin{equation}
			\left.\ddt L_{K_t}^{2n}\right|_{t=0}=\frac{1}{\vol(K_0)^3}\left.\ddt\int_{K_t}\norm{x}_2^2 -(n+2)\dint x\right|_{t=0}.
		\end{equation}
	\end{proposition}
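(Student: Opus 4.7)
The plan is to combine Jacobi's formula for the derivative of a determinant with the quotient rule, using isotropy of $K_0$ to eliminate the cross terms involving the centroid. As a first step, I would use weak differentiability to secure existence of all relevant derivatives: since $\left.\ddt \int_{K_t} g(x) \dint x\right|_{t=0}$ exists for every continuous $g$, the maps $t \mapsto \vol(K_t)$, $t \mapsto \int_{K_t} x_i \dint x$ and $t \mapsto \int_{K_t} x_i x_j \dint x$ are all differentiable at $t=0$, and hence so is the centroid $t \mapsto c_{K_t}$ by the quotient rule.

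Next, I would work with the unnormalized covariance matrix $M_t \coloneqq \vol(K_t)\, A_{K_t} = \int_{K_t} xx^\transp \dint x - \vol(K_t)\, c_{K_t} c_{K_t}^\transp$, so that $L_{K_t}^{2n} = \det M_t / \vol(K_t)^{n+2}$. The product rule applied to the second summand produces only terms that carry a factor of $c_{K_0} = 0$, and that summand therefore contributes nothing at $t=0$. Consequently,
\[
\left.\ddt M_t\right|_{t=0} = \left.\ddt \int_{K_t} xx^\transp \dint x\right|_{t=0}, \qquad \tr\left.\ddt M_t\right|_{t=0} = \left.\ddt \int_{K_t} \norm{x}^2 \dint x\right|_{t=0}.
\]

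Finally I would apply Jacobi's formula. Since $K_0$ is isotropic, $M_0 = \vol(K_0)\, I$, so $\det M_0 = \vol(K_0)^n$ and $M_0^{-1} = \vol(K_0)^{-1} I$, whence
\[
\left.\ddt \det M_t\right|_{t=0} = \det(M_0)\,\tr\!\left(M_0^{-1}\left.\ddt M_t\right|_{t=0}\right) = \vol(K_0)^{n-1}\left.\ddt \int_{K_t} \norm{x}^2 \dint x\right|_{t=0}.
\]
Combined with $\left.\ddt \vol(K_t)\right|_{t=0} = \left.\ddt \int_{K_t} 1 \dint x\right|_{t=0}$, the quotient rule applied to $\det M_t/\vol(K_t)^{n+2}$ yields the claimed identity after collecting powers of $\vol(K_0)$; the coefficient $-(n+2)$ appears as the exponent of $\vol(K_t)$ in the denominator. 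The argument is essentially a direct calculation, and the only step requiring care is certifying differentiability of the centroid so that the $c_{K_t} c_{K_t}^\transp$ terms may be discarded; this is precisely what the weak differentiability hypothesis provides.
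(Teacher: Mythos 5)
Your proof is correct, and the approach is fundamentally the same as the paper's: secure differentiability of $\vol(K_t)$, the moments, and the centroid from weak differentiability; kill the $c_{K_t}c_{K_t}^\transp$ contributions at $t=0$ using $c_{K_0}=0$; apply Jacobi's formula, with isotropy making the inverse matrix trivial; then collect terms via the quotient rule. The one substantive difference is bookkeeping. The paper differentiates the normalized covariance matrix $A_{K_t}$, whose $1/\vol(K_t)$ prefactor injects an extra $\ddt\vol$ term into $\ddt A_{K_t}$; there the $-(n+2)$ emerges as $-2$ (from the $\vol(K_t)^{2}$ in the denominator of $L_{K_t}^{2n}$) plus $-n$ (from the trace of $I_n$ after Jacobi). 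You instead absorb the volume into the unnormalized matrix $M_t=\vol(K_t)A_{K_t}$, writing $L_{K_t}^{2n}=\det M_t/\vol(K_t)^{n+2}$; this keeps $\left.\ddt M_t\right|_{t=0}$ clean (just $\ddt\int_{K_t}xx^\transp\dint x$) and makes the $-(n+2)$ drop out directly as the exponent in the denominator. It is a modest but genuine streamlining: one fewer quotient-rule term to track, at the cost of a slightly less direct connection to the quantity $A_{K_t}$ that the isotropy condition normalizes. Either route is fine; yours is arguably a touch cleaner for this particular computation.
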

	
	To make this article more self-contained, we reproduce the proof of  \cite[Lem.~5]{rad16} with some minor modifications to suit our setting.
	
	\begin{proof}[Proof of Proposition \ref{prop_rademacher}] Recalling \eqref{eq_def_L_K} and using that $t \mapsto K_t$ is weakly differentiable at $t=0$, we have
		\begin{equation} \label{eq_ddt_L_K}
			\left.\ddt L_{K_t}^{2n}\right|_{t=0}=\frac{-2 \det A_{K_t}}{\vol (K_0)^3}\left.\ddt\vol K_t\right|_{t=0}+\frac{1}{\vol(K_0)^2}\left.\ddt\det A_{K_t}\right|_{t=0}.
		\end{equation}
		By definition, we have
		\begin{align}
			A_{K_t} &= \frac1{\vol(K_t)}\int_{K_t}(x-c_{K_t})(x-c_{K_t})^\transp \dint x =\frac1{\vol(K_t)}\int_{K_t}xx^\transp \dint x-c_{K_t}c_{K_t}^\transp
		\end{align}
		for all $t \in [-1,1]$ and, moreover,
		\begin{equation}
			\left.\ddt c_{K_t} c_{K_t}^\transp\right|_{t=0} = \left.\left[ \left(\ddt c_{K_t}\right)_i \left(c_{K_t}\right)_j+\left(c_{K_t}\right)_i \left(\ddt c_{K_t}\right)_j\right]_{i,j=1}^n\right|_{t=0}.
		\end{equation}
		Because $K_0$ is isotropic, we have $c_{K_0}=0$ and hence $\left.\ddt c_{K_t} c_{K_t}^\transp\right|_{t=0}= 0$. This implies
		\begin{equation} \label{eq_ddt_A_K_t}
			\left.\ddt A_{K_t}\right|_{t=0}= \left.\left(-\frac{\ddt \vol(K_t)}{\vol(K_t)^2}\int_{K_t}xx^\transp \dint x+\frac{1}{\vol(K_t)}\ddt \int_{K_{t}}xx^\transp \dint x\right)\right|_{t=0}.
		\end{equation}
		Now Jacobi's formula asserts that
		\begin{equation} \label{eq_jacobis_formula}
			\left.\ddt\det A_{K_t}\right|_{t=0} = \left.\SCPR{\det(A_{K_t})A_{K_t}^{-\transp},\ddt A_{K_t}}\right|_{t=0},
		\end{equation}
		where $\scpr{\cdot,\cdot}$ denotes the Frobenius scalar product. Since $K_0$ is isotropic, we have $(\det A_{K_0})\cdot A_{K_0}^{-\transp}=I_n$ and hence
		\begin{align} \label{eq_ddt_det_A_K_t}
			\left.\ddt\det A_{K_t}\right|_{t=0}&=\left. \left(-\frac{\ddt \vol(K_t)}{\vol(K_t)^2}\int_{K_t}\norm{x}_2^2 \dint x+\frac{1}{\vol(K_t)}\ddt \int_{K_{t}}\norm{x}_2^2 \dint x \right)\right|_{t=0}\\&=\left.\frac{1}{\vol(K_0)}\left(-n\ddt \vol(K_t)+\ddt\int_{K_t}\norm{x}_2^2 \dint x\right)\right|_{t=0}.
		\end{align}
		Plugging this into \eqref{eq_ddt_L_K}, we get
		\begin{equation} 
			\left.\ddt L_{K_t}^{2n}\right|_{t=0}=\frac{1}{\vol (K_0)^3}\left.\left(-2\ddt\vol (K_t)-n\ddt \vol(K_t)+\ddt\int_{K_t}\norm{x}_2^2 \dint x\right)\right|_{t=0}. \qedhere
		\end{equation}
	\end{proof}
	
	Proposition \ref{prop_rademacher} implies the following first-order condition for local maximizers of the isotropic constant.
	
	\begin{corollary} \label{cor_first_order_condition}
		Let $K \in \Knn$ be an isotropic local maximizer of $K \mapsto L_K$. If there exists a weakly differentiable map $[0,1) \rightarrow \Knn$, $t \mapsto K_t$ with $K_{t^*}=K$ for some $t^* \in [0,1)$, then
		\begin{equation} \label{eq_cor_first_order_condition_one_sided} 
			\left.\ddt \int_{K_t} [\norm{x}_2^2 -(n+2)] \dint x \right|_{t=t^*} \leq 0.
		\end{equation}
		If $t^* \in (0,1)$, then we have
		\begin{equation} \label{eq_cor_first_order_condition}
			\left.\ddt \int_{K_t} [\norm{x}_2^2 -(n+2)] \dint x \right|_{t=t^*} = 0.
		\end{equation}
	\end{corollary}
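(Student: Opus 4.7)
The plan is to derive this as a direct consequence of Proposition \ref{prop_rademacher} combined with the elementary first-order necessary condition for a local extremum of a real-valued function; the entire content of the corollary is essentially packaged in Rademacher's formula, and the rest is just a translation into the language of a scalar optimization problem.

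First, I would observe that although Proposition \ref{prop_rademacher} is stated at the parameter value $t=0$, its proof uses only the isotropicity of $K_0$ and not the specific value of the parameter. Replacing $t$ by $t-t^*$ (or, equivalently, rerunning the proof with $t^*$ in the role of $0$), the same computation yields
\begin{equation}
\left.\ddt L_{K_t}^{2n}\right|_{t=t^*}=\frac{1}{\vol(K_{t^*})^3}\left.\ddt\int_{K_t}\norm{x}^2 -(n+2)\dint x\right|_{t=t^*},
\end{equation}
with $K=K_{t^*}$ isotropic by assumption. Since the prefactor $1/\vol(K_{t^*})^3$ is strictly positive, both derivatives share the same sign (and vanish simultaneously), so every sign condition on the left-hand side transfers to the right-hand side.

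Next, I would invoke the hypothesis that $K$ is a local maximizer of $K \mapsto L_K$ in $\Knn$. Together with the continuity of $t \mapsto K_t$ (implied by weak differentiability), this forces the scalar function $t \mapsto L_{K_t}^{2n}$ to attain a local maximum at $t^*$ along the path. If $t^* \in (0,1)$, the usual two-sided first-order condition forces $\ddt L_{K_t}^{2n}|_{t=t^*} = 0$, which translates into \eqref{eq_cor_first_order_condition}. If $t^*$ is the left endpoint $0$ of $[0,1)$, only the right derivative is available; the one-sided first-order condition at a left-endpoint local maximum then yields \eqref{eq_cor_first_order_condition_one_sided}.

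There is essentially no obstacle: the result is a formal corollary of Rademacher's formula. The only minor subtlety is to ensure that weak differentiability at the boundary value $t^*=0$ is interpreted as a one-sided derivative, as the paper explicitly remarks after Definition \ref{def_weak_derivative}; this legitimizes the application of Proposition \ref{prop_rademacher} at that endpoint and guarantees that the right derivative of $L_{K_t}^{2n}$ exists and is given by the right-hand side of the displayed identity.
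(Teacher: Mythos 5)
Your approach mirrors exactly what the paper leaves implicit: Corollary~\ref{cor_first_order_condition} is intended as an immediate consequence of Proposition~\ref{prop_rademacher} together with the elementary first-order necessary condition for a local extremum (two-sided at an interior point, one-sided at the left endpoint of the domain), and your observation that the proposition applies at any isotropic $K_{t^*}$ after a trivial reparametrization is the right way to set it up. You also correctly, if silently, read the ``$t=0$'' in \eqref{eq_cor_first_order_condition} as meaning $t=t^*$.

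There is, however, a sign issue that you assert past rather than derive. At a left-endpoint local maximum $t^*=0$ of $t\mapsto L_{K_t}^{2n}$, the one-sided (right) derivative is \emph{nonpositive}, and since Proposition~\ref{prop_rademacher} relates $\ddt L_{K_t}^{2n}$ to $\ddt\int_{K_t}[\norm{x}^2-(n+2)]\dint x$ through the strictly positive factor $1/\vol(K_{t^*})^3$, the argument you invoke actually gives
\begin{equation}
\left.\ddt \int_{K_t}\norm{x}^2 -(n+2)\dint x \right|_{t=t^*} \leq 0,
\end{equation}
which is the opposite of \eqref{eq_cor_first_order_condition_one_sided}. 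The $\leq$ direction is also what the paper itself uses downstream, for instance the step $\left.\ddt L_{\tilde K_t}^{2n}\right|_{t=0}\leq 0$ in the proof of Theorem~\ref{thm_minkowski_summands}, so the $\geq$ in the corollary's statement is almost certainly a typographical error. Your proof should have exposed this: by stating that the one-sided first-order condition ``yields'' the displayed inequality without tracking the direction, you end up endorsing a claim that your own argument contradicts. Working out the sign explicitly would have turned this into a caught typo rather than a reproduced one.
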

	
	Looking at Corollary \ref{cor_first_order_condition}, it is natural to ask which  derivatives ``can appear'' on the left-hand sides of \eqref{eq_cor_first_order_condition_one_sided} and \eqref{eq_cor_first_order_condition}. In other words, which 
	signed measures can be realized as weak-derivatives at $t=t^*$ of maps $[0,1)\rightarrow \Knn$ with $K_{t^*}=K$? Continuing earlier work by Weisshaupt \cite{wei01}, this question was studied in \cite{kip24}; a characterization of the space of realizable signed measures was given in the special case where $K$ is a polytope.
	
	We now turn to the second derivative of $K \mapsto L_K$. In order to state the result of our computation, we need a notion of a ``sufficiently smooth'' convex-body-valued map. This leads to us to the following definition.
	\begin{definition}
		Let $t_1<0<t_2$ and $K \in \Kno$. A map $[t_1,t_2] \rightarrow \Kno$, $t \mapsto K_t$ with $K_0=K$ is called an \emph{admissible variation of $K$} if
		\begin{enumerate}[label=(\roman*)]
			\item $t \mapsto K_t$ is weakly differentiable at $t=0$, and
			\item the functions $[t_1,t_2] \rightarrow \R$ given by
			\begin{equation}
				t \mapsto\int_{K_t} \norm{x}_2^2 \dint x \quad \text{and} \quad  t\mapsto \vol (K_t)
			\end{equation}
			are twice differentiable at $t=0$.
		\end{enumerate}
	\end{definition}
	In this article, we will study two sorts of admissible variations: RS-movements and linear interpolants of gauge functions. We are now ready to state the main result of this section.
	
	\begin{proposition} \label{prop_L_K_second_derivative}
		Let $[t_1,t_2] \rightarrow \Knn$, $t \mapsto K_t$ be an admissible variation of an isotropic convex body $K_{0}=K \subset \R^n$. If the weak derivative of $t \mapsto K_t$ at $t=0$ is centered, i.e., if
		\begin{equation} \label{eq_prop_L_K_second_derivative_assumption_centered}
			\left.\ddt \int_{K_t} x \dint x \right|_{t=0}= 0,
		\end{equation}
		then $t\mapsto L_{K_t}^{2n}$ is twice differentiable at $t=0$ with
		\begin{align*}
			\left.\ddtn{2}L_{K_t}^{2n}\right|_{t=0}
			&= \frac{1}{\vol(K_t)^4}\left[(n^2+5n+6)\left(\ddt \vol(K_t)\right)^2+\left(\ddt \int_{K_t} \norm{x}_2^2 \dint x\right)^2\right.\\&\quad\left.\left.-(2n+4)\left(\ddt \vol (K_t)\right)\left( \ddt \int_{K_t}\norm{x}_2^2\dint x\right)-\sum_{i,j=1}^n\left(\ddt\int_{K_t}x_ix_j\dint x\right)^2\right]\right|_{t=0}\\
			&\quad+\left.\frac{1}{\vol(K_t)^3} \ddtn{2}\int_{K_t}[\norm{x}_2^2-(n+2)]\dint x \right|_{t=0}.
		\end{align*}
	\end{proposition}
	
	\begin{proof}
		Differentiating the right-hand side of \eqref{eq_ddt_L_K} once again, we have
		\begin{align} \label{eq_ddt2_L_K}
			\quad\quad\ddtn{2}L_{K_t}^{2n}
			&=-\frac{2 \left(\ddt\det A_{K_t}\right)}{\vol(K_t)^3}\ddt\vol (K_t) +\frac{6 \det A_{K_t}}{\vol(K_t)^4}\left(\ddt\vol (K_t)\right)^2\\
			&\quad-\frac{2 \det A_{K_t}}{\vol(K_t)^3}\ddtn{2}\vol (K_t)-\frac{2\ddt \vol (K_t)}{\vol(K_t)^3}\ddt\det A_{K_t}+\frac{1}{\vol(K_t)^2}\ddtn{2}\det A_{K_t}
		\end{align}
		for all $t\in [t_1,t_2]$ for which the second derivatives on the right-hand side exist.
		At first sight, the right-hand side looks complicated, but most of the terms already appeared in the proof of Proposition \ref{prop_rademacher}. The crucial term is $\ddtn{2}\det A_{K_t}$. Applying \eqref{eq_jacobis_formula} twice, we see that this term satisfies
		\begin{align*}
			&\ddtn{2}\det A_{K_t}=\ddt\SCPR{\det(A_{K_t})A_{K_t}^{-\transp}, \ddt A_{K_t}}\\
		&\quad=\SCPR{\ddt\left(\det(A_{K_t})A_{K_t}^{-\transp}\right), \ddt A_{K_t}}+\SCPR{\det(A_{K_t})A_{K_t}^{-\transp}, \ddtn{2} A_{K_t}}\\
			&\quad=\det(A_{K_t})\left[\SCPR{\ddt\left(A_{K_t}^{-\transp}\right), \ddt A_{K_t}}+\SCPR{A_{K_t}^{-\transp},\ddt A_{K_t}}^2+\right.
			\\&\quad\quad\left.\SCPR{A_{K_t}^{-\transp}, \ddtn{2} A_{K_t}}\right]
		\end{align*}
		for all $t\in [t_1,t_2]$ for which the second derivatives on the right-hand side exist.
		
		Using the identity $\ddt A_{K_t}^{-1}=-A_{K_t}^{-1} \cdot \ddt A_{K_t}\cdot A_{K_t}^{-1}$ and the fact that $K$ is isotropic, this implies
		\begin{align} \label{eq_thm_second_derivative_L_K_det_AKt}
			\left.\ddtn{2}\det A_{K_t}\right|_{t=0}=\left.\left(-\NORM{\ddt A_{K_t}}_F^2+\tr\left(\ddt A_{K_t}\right)^2+\tr\left(\ddtn{2} A_{K_t}\right)\right)\right|_{t=0},
		\end{align}
		where $\dotnorm_F$ denotes the Frobenius norm. We now compute each of the three summands on the right-hand side of \eqref{eq_thm_second_derivative_L_K_det_AKt} separately.
		
		Recalling \eqref{eq_ddt_A_K_t}, we have
		\begin{align} \label{eq_thm_second_derivative_L_K_first_term}
			\quad\quad&\left.-\NORM{\ddt A_{K_t}}_F^2\right|_{t=0}=-\frac{1}{\vol(K_0)^2}\left[\NORM{\ddt \vol (K_t) I_n}_F^2\right.\\&\quad\quad\quad\quad\quad\quad\quad\quad\quad\left.\left.-2 \SCPR{\ddt \vol (K_t) I_n, \ddt \int_{K_t}xx^\transp\dint x}+ \NORM{\ddt\int_{K_t}xx^\transp\dint x}_F^2\right]\right|_{t=0}\\
			&\quad\quad\quad=\frac{1}{\vol(K_0)^2}\left[-n\left(\ddt \vol(K_t)\right)^2+2\left(\ddt \vol(K_t)\right)\left( \ddt \int_{K_t}\norm{x}_2^2\dint x\right)\right.\\&\quad\quad\quad\quad\quad\quad\left.\left.-\sum_{i,j=1}^n\left(\ddt\int_{K_t}x_ix_j\dint x\right)^2\right]\right|_{t=0}.
		\end{align}
		Again using \eqref{eq_ddt_A_K_t} in combination with the fact that $K_0$ is isotropic, the second summand becomes
		\begin{align} \label{eq_thm_second_derivative_L_K_second_term}
			\quad\quad\tr\left(\ddt A_{K_t}\right)^2 &= \frac{1}{\vol(K_0)^2}\left[ n^2 \left(\ddt \vol(K_t)\right)^2 \right.\\
			&\quad\quad\quad\left.\left.-2n \ddt \vol(K_t) \ddt \int_{K_t}\norm{x}_2^2\dint x + \left(\ddt \int_{K_t}\norm{x}_2^2\dint x\right)^2  \right]\right|_{t=0}.
		\end{align}
		It remains to compute the third summand. By assumption \eqref{eq_prop_L_K_second_derivative_assumption_centered}, we have
		\begin{equation}
			\left.\ddt c_{K_t}\right|_{t=0} = \left.\frac{-\ddt \vol (K_t)}{\vol(K_t)^2} \int_{K_t} x \dint x + \frac{1}{\vol(K_t)} \ddt \int_{K_t} x \dint x \right|_{t=0} = 0
		\end{equation}
		and hence, again using that $K$ is isotropic,
		\begin{align}
			\left.\ddtn{2} c_{K_t} c_{K_t}^\transp\right|_{t=0} &= \left.\left[ \left(\ddtn{2}c_{K_t}\right)_i \left(c_{K_t}\right)_j+2\left(\ddt c_{K_t}\right)_i \left(\ddt c_{K_t}\right)_j+\left(c_{K_t}\right)_i \left(\ddtn{2}c_{K_t}\right)_j\right]_{i,j=1}^n\right|_{t=0}\nonumber\\&=0.
		\end{align}
		This implies that the third summand satisfies
		\begin{align} \label{eq_thm_second_derivative_L_K_third_term}
			&\left.\tr\left(\ddtn{2} A_{K_t}\right)\right|_{t=0} = \left. \ddtn{2}\E_{X\sim K_t}[\norm{X}_2^2]\right|_{t=0}\\
			&\quad\quad\quad=\left[\left(\frac{2\left(\ddt\vol(K_t)\right)^2}{\vol (K_t)^3}-\frac{\ddtn{2}\vol(K_t)}{\vol(K_t)^2}\right)\int_{K_t}\norm{x}_2^2\dint x\right.\\&\quad\quad\quad\quad\quad\left.\left.-2\frac{\ddt \vol(K_t)}{\vol (K_t)^2}\ddt\int_{K_t}\norm{x}_2^2\dint x+\frac{1}{\vol(K_t)}\ddtn{2} \int_{K_{t}}\norm{x}_2^2 \dint x\right]\right|_{t=0}\\
			&\quad\quad\quad=\left[n\left(\frac{2\left(\ddt\vol(K_t)\right)^2}{\vol (K_t)^2}-\frac{\ddtn{2}\vol(K_t)}{\vol(K_t)}\right)\right.\\&\quad\quad\quad\quad\quad\left.\left.-2\frac{\ddt \vol(K_t)}{\vol (K_t)^2}\ddt\int_{K_t}\norm{x}_2^2\dint x+\frac{1}{\vol(K_t)}\ddtn{2} \int_{K_{t}}\norm{x}_2^2 \dint x\right]\right|_{t=0}.
		\end{align}
		Plugging our expressions \eqref{eq_thm_second_derivative_L_K_first_term}, \eqref{eq_thm_second_derivative_L_K_second_term} and \eqref{eq_thm_second_derivative_L_K_third_term} for the three summands into the original formula \eqref{eq_thm_second_derivative_L_K_det_AKt}, we obtain
		\begin{align} \label{eq_thm_second_derivative_L_K_eq_3}
			\quad\quad&\left.\ddtn{2}\det A_{K_t}\right|_{t=0} = \frac{1}{\vol (K_t)^2}\left[(n^2+n)\left(\ddt \vol(K_t)\right)^2+\left(\ddt \int_{K_t} \norm{x}_2^2 \dint x\right)^2\right.\\&\quad\quad\quad\quad\quad\left.\left.-2n\left(\ddt \vol (K_t)\right)\left( \ddt \int_{K_t}\norm{x}_2^2\dint x\right)-\sum_{i,j=1}^n\left(\ddt\int_{K_t}x_ix_j\dint x\right)^2\right]\right|_{t=0}\\
			&\quad\quad\quad\quad\quad\left.+\frac{1}{\vol(K_t)} \ddtn{2}\int_{K_t}[\norm{x}_2^2-n]\dint x\right|_{t=0}.
		\end{align}
		Finally, we observe that \eqref{eq_ddt2_L_K}, \eqref{eq_ddt_det_A_K_t} and the fact that $K$ is isotropic imply
		\begin{align} \label{eq_thm_second_derivative_L_K_eq_4}
			\left.\ddtn{2}L_{K_t}^{2n}\right|_{t=0}
			&=\left[\frac{-4 \left(\ddt\det A_{K_t}\right)}{\vol (K_t)^3}\ddt\vol K_t +\frac{6}{\vol (K_t)^4}\left(\ddt\vol K_t\right)^2\right.\\
			&\quad\quad\quad\left.\left.-\frac{2}{\vol (K_t)^3}\ddtn{2}\vol K_t+\frac{1}{\vol(K_t)^2}\ddtn{2}\det A_{K_t}\right]\right|_{t=0}\\
			&=\left[\frac{-4 \left(\ddt\int_{K_t}\norm{x}_2^2 \dint x\right)}{\vol( K_t)^4}\ddt\vol K_t +\frac{4n+6}{\vol (K_t)^4}\left(\ddt\vol K_t\right)^2\right.\\
			&\quad\quad\quad\left.\left.-\frac{2}{\vol (K_t)^3}\ddtn{2}\vol K_t+\frac{1}{\vol(K_t)^2}\ddtn{2}\det A_{K_t}\right]\right|_{t=0}.
		\end{align}
		The claim follows by plugging \eqref{eq_thm_second_derivative_L_K_eq_3} into \eqref{eq_thm_second_derivative_L_K_eq_4}.
	\end{proof}
	
	\begin{remark}
		Let $[t_1,t_2] \rightarrow \Knn$, $t \mapsto K_t$ be as in Proposition \ref{prop_L_K_second_derivative}. If we are interested in a second-order necessary condition for local maximizers of the isotropic constant, then it makes sense to assume that the first derivative vanishes, as the second-order condition is redundant otherwise. If we have $\left.\ddt L_{K_t}^{2n}\right|_{t=0} = 0$, then Proposition \ref{prop_rademacher} implies $\ddt \left.\int_{K_t} \norm{x}_2^2 \dint x \right|_{t=0} = (n+2) \ddt \left. \vol(K_t)^2 \right|_{t=0}$ and the rather complicated expression from Proposition \ref{prop_L_K_second_derivative} simplifies to
		\begin{align*}
			\left.\ddtn{2}L_{K_t}^{2n}\right|_{t=0}
			&=\left[\frac{n+2}{(\vol K_0)^4}\left(\ddt\vol K_t\right)^2 -\frac{1}{\vol(K_0)^4}\sum_{i,j=1}^n\left(\ddt\int_{K_t}x_ix_j\dint x\right)^2\right.\\
			&\left.\left.\quad\quad+\frac{1}{\vol(K_0)^3}\left(\ddtn{2} \int_{K_{t}}[\norm{x}_2^2 -(n+2)]\dint x\right)\right]\right|_{t=0}.
		\end{align*}
	\end{remark}

	\section{Shadow systems and generalized RS-movements} \label{sect_shadow_systems}
	Let $K \in \Knn$ and $u \in \sphere$. A \emph{shadow system} is a one-parameter family of convex bodies of the form
	\begin{equation}
		K_t = \conv \{x +t\beta(x)  u \mid x \in K\} \quad \text{for } t \in [t_1,t_2],
	\end{equation}
	where $\beta \colon K \rightarrow \R$ is an arbitrary function. Motivated by certain isoperimetric-type inequalities, shadow systems were first studied by Rogers and Shephard \cite{rs58}, who showed that the volume $\vol(K_t)$ of a shadow system is a convex function in $t$. Shadow systems have also turned out to be a useful tool in the study of the Mahler volume \cite{cg06,mr06,fmz12,sar13,afz19,mr19}.
	
	In the introduction, we have already encountered a special class of shadow systems, namely, RS-movements. An RS-movement can be defined as a shadow system with the additional requirements that 
	\begin{enumerate}[label=(\roman*)]
		\item \label{item_rs_movement_1} the \emph{speed function} $\beta$ is constant on each chord $K \cap [x+\linh \{u\}]$ for $x \in K$, and
		\item \label{item_rs_movement_2} the set $\{x +t\cdot\beta(x)u \mid x \in K\}$ is already convex for all $t \in [t_1,t_2]$.
	\end{enumerate}
	We note that the speed function $\beta$ of an RS-movement can be affine (on $K$). In this case, $K_t$ is an affine image of $K$ for all $t \in [t_1,t_2]$. Because such RS-movements provide no information about extremizers of affinely invariant functionals, the definition of RS-decomposability (Definition \ref{def_RS_decomposable}) requires $\beta$ to be non-affine. %

	RS-movements were introduced by Campi, Colesanti and Gronchi \cite{ccg99} to study local maximizers of Sylvester-type functionals. Theorem \ref{thm_ccg} is a special case of \cite[Thm.~3.1]{ccg99}, corresponding to the exponent $r=2$. The latter result is stated in the setting of Sylvester's problem and addresses arbitrary exponents. For the connection between Sylvester's problem and isotropic constants, we refer to \cite{kin69}.
	
	In this section, we study a class of shadow systems that is situated between RS-movements and arbitrary shadow systems. Relaxing condition \ref{item_rs_movement_1}, we allow that the speed function $\beta$ is affine (rather than constant) on each chord $K \cap [x+\linh \{u\}]$ that intersects the hyperplane $u^\bot$. The purpose of this somewhat technical condition will become clear below.	
	\begin{definition} \label{def_generalized_RS_movement}
		Let $K \in \Knn$ be centered, $u \in \sphere$ and $\beta \colon K \rightarrow \R$. Moreover, let $t_1< 0 < t_2 \in \R$ and $L \coloneqq \linh \{u\}$. We call the shadow system
		\begin{equation}
			[t_1,t_2] \rightarrow \Knn, \quad t \mapsto K_t \coloneqq \conv \{x +t\cdot\beta(x)u \mid x \in K\}
		\end{equation}
		a \emph{generalized RS-movement} of $K$ if
		\begin{enumerate}[label=(\roman*)]
			\item \label{def_generalized_RS_movement_1} $\beta$ is affine on $(x+L) \cap K$ for all $x \in K$,
			\item \label{def_generalized_RS_movement_2} $\beta$ is constant on $(x+L) \cap K$ for all $x\in \pi_{u^\bot}(K)$ with $x\notin K$, and
			\item \label{def_generalized_RS_movement_3} the set $\{x +t\cdot\beta(x)u \mid x \in K\}$ is a full-dimensional convex body for all $t \in [t_1,t_2]$.
		\end{enumerate}
		Given a generalized RS-movement with speed function $\beta$, we define two functions $\pi_{u^\bot}(K) \rightarrow \R$ via
		\begin{equation} \label{eq_def_beta_plus_minus}
			\beta^+(x) \coloneqq \beta\left(\argmax_{y \in (x+L) \cap K} \scpr{y,u}\right) \quad \text{and} \quad \beta^-(x) \coloneqq \beta\left(\argmin_{y \in (x+L) \cap K} \scpr{y,u}\right).
		\end{equation}
	\end{definition}
	
	For an illustration of Definition \ref{def_generalized_RS_movement}, we refer to Figure \ref{fig_generalized_RS_movement}.  We start with some general properties of generalized RS-movements.
	\begin{lemma} \label{lemma_generalized_RS_movement_basic_properties}
		Let $[t_1,t_2] \rightarrow \Knn$, $t \mapsto K_t$ be a generalized RS-movement in direction $u$ with speed function $\beta$. Then
		\begin{enumerate}[label=(\roman*)]
			\item \label{lemma_generalized_RS_movement_basic_properties_1} for all $x \in \relint \pi_{u^\bot}(K)$ and $t\in [t_1,t_2]$, we have
			\begin{equation} \label{eq_lemma_generalized_RS_movement_basic_properties}
				f_{K_t,u}(x) = f_{K_0,u}(x) + t \cdot \beta^-(x) \quad \text{and} \quad g_{K_t,u}(x) = g_{K_0,u}(x) - t \cdot \beta^+(x),
			\end{equation}
		where $f_{K_t,u}$ and $g_{K_t,u}$ are defined as \eqref{eq_def_f_and_g},
		\item \label{lemma_generalized_RS_movement_basic_properties_2} $\beta^+$ and $\beta^-$ are continuous on $\relint \pi_{u^\bot}(K)$, and
		\item \label{lemma_generalized_RS_movement_basic_properties_3} if $\beta$ does not vanish, then there exists a point $x \in \relint \pi_{u^\bot}(K)$ with $\beta^+(x) \neq 0$ or $\beta^-(x) \neq 0$.
		\end{enumerate}
	\end{lemma}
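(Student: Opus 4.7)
The plan is to address the three parts in order, noting that (ii) and (iii) both follow from (i) with short additional arguments.

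For part (i), I would work with the map $\Phi_t \colon K \to \R^n$ defined by $\Phi_t(y) = y + t\beta(y)u$, which preserves the projection onto $u^\bot$. By condition (iii) of Definition~\ref{def_generalized_RS_movement}, $K_t = \Phi_t(K)$ already (no convex hull needed), so for $x \in \relint \pi_{u^\bot}(K)$ the chord of $K_t$ over $x$ equals $\Phi_t((x+L) \cap K)$. Conditions (i) and (ii) of the definition jointly imply that $\beta$ is affine on every chord of $K$ in direction $u$ (constancy being a special case of affineness), so $\Phi_t$ restricted to such a chord is affine and sends the segment $[y_{\min}, y_{\max}]$ to $[\Phi_t(y_{\min}), \Phi_t(y_{\max})]$. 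Reading off the $u$-coordinates of these endpoint images and comparing with the definitions of $\beta^\pm$ yields \eqref{eq_lemma_generalized_RS_movement_basic_properties}.

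For part (ii), I would rewrite (i) as $\beta^-(x) = t^{-1}(f_{K_t, u}(x) - f_{K_0, u}(x))$ for any fixed $t \in (0, t_2]$ and analogously for $\beta^+$. Since $f_{K_t, u}$ and $f_{K_0, u}$ are convex on $\pi_{u^\bot}(K)$, they are continuous on its relative interior, and continuity of $\beta^\pm$ follows.

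For part (iii), I would argue by contraposition: suppose $\beta^+ \equiv \beta^- \equiv 0$ on $\relint \pi_{u^\bot}(K)$, and show $\beta \equiv 0$ on $K$. Part (i) gives that $K_t$ and $K_0$ share all chords over $\relint \pi_{u^\bot}(K)$; since the union of these chords is dense in each of the compact sets $K_t$ and $K_0$, we deduce $K_t = K$ for all $t \in [t_1, t_2]$, and in particular $\Phi_t(K) \subseteq K$. Now fix any chord $(x + L) \cap K = [y_{\min}, y_{\max}]$. The point $\Phi_t(y_{\min}) = y_{\min} + t\beta(y_{\min})u$ has the same $\pi_{u^\bot}$-projection as $y_{\min}$, so it lies on the same chord; since $y_{\min}$ is the infimum of the chord in direction $u$, we need $t\beta(y_{\min}) \geq 0$ for all $t \in [t_1, t_2]$, and the assumption $t_1 < 0 < t_2$ forces $\beta(y_{\min}) = 0$. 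Symmetrically $\beta(y_{\max}) = 0$, and affineness of $\beta$ on the chord then extends the vanishing to the whole chord.

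The main obstacle is the final argument in part (iii), where pointwise vanishing of $\beta$ at the chord endpoints must be extracted from the global identity $\Phi_t(K) = K$. This step relies essentially on the two-sided nature of the parameter interval ($t_1 < 0 < t_2$), which lets us push $y_{\min}$ and $y_{\max}$ in both directions along $u$.
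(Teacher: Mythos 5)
Your proposal for part (ii) matches the paper, and your proof of part (iii) is correct but takes a different route, which I discuss below. There is, however, a genuine gap in part (i).

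In part (i), you observe correctly that $\Phi_t$ restricted to a chord $(x+L)\cap K$ is an affine self-map of the line $x+L$, and that it sends the segment $[y_{\min},y_{\max}]$ onto the segment with endpoints $\Phi_t(y_{\min})$ and $\Phi_t(y_{\max})$. But the formula \eqref{eq_lemma_generalized_RS_movement_basic_properties} identifies $\Phi_t(y_{\min})$ with the \emph{new minimum} and $\Phi_t(y_{\max})$ with the \emph{new maximum}; an affine map of a segment can reverse orientation, so this identification is not automatic. You must rule out that the two endpoints ``pass each other'' as $t$ ranges over $[t_1,t_2]$, and this is exactly the nontrivial step in the paper's argument. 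The paper does it as follows: if at some $t^*$ the endpoints coincided, the chord over $x$ would have zero $1$-dimensional volume; since the chord-width function $y\mapsto\volt{1}[(y+L)\cap K_{t^*}]$ is concave and non-negative on $\pi_{u^\bot}(K_{t^*})$ and vanishes at the relative interior point $x$, it would vanish identically, contradicting that $K_{t^*}$ is full-dimensional. Your write-up omits this entirely, and without it the displayed identity could fail (with $\beta^+$ and $\beta^-$ swapped) for large $|t|$.

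Your part (iii) is correct and genuinely different from the paper's. The paper argues directly: if $\beta\not\equiv 0$, then (using affineness on chords) some chord endpoint has nonzero speed, so $K_t\neq K_0$ for some $t$; since each $K_t$ equals the closure of its part over $\relint\pi_{u^\bot}(K)$, the chords over the relative interior must differ, and (i) forces some $\beta^\pm(x)\neq 0$ there. You instead argue by contraposition, deduce $K_t=K$ for all $t$, and then exploit the two-sidedness of the interval $[t_1,t_2]$ to force $\beta$ to vanish at both endpoints of every chord, whence by affineness on the whole chord. Both are valid; your version is perhaps more geometrically transparent and isolates the role of $t_1<0<t_2$ more explicitly, whereas the paper's version makes the structural observation $K_t=\cl[K_t\cap\pi_{u^\bot}^{-1}(\relint\pi_{u^\bot}(K))]$ do the work. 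Either way, the missing endpoint-ordering argument in (i) is the piece you need to supply.
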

	
	\begin{figure}[t]
		\begin{subfigure}[c]{.4\linewidth}
			\centering
			\begin{tikzpicture}%
	[scale=1.000000,
	axis/.style={dotted, thin},
	axis2/.style={loosely dotted, thin},
	edge/.style={color=black},
	edge2/.style={color=red,dashed},
	red/.style={color=red},
	orange/.style={fill=orange,fill opacity=0.200000},
	cyan/.style={fill=cyan,fill opacity=0.200000},
	vertex/.style={},
	facet3/.style={color=blue, opacity=0.50000},
	facet4/.style={fill=red,fill opacity=0.20000}]

	\draw[-stealth] (0,0) -- (0,1);

	\draw[edge] (-2,0.5) -- (-1,2);
	
	\draw[edge] (-1,2) -- (1,1.5);
	\draw[edge,axis] (-1,2) -- (-1,4);
	
	\draw[edge] (1,1.5) -- (2,0);
	\draw[edge,axis] (1,1.5) -- (1,4);
	
	\draw[edge] (2,0) -- (1.25,-1.5);
	\draw[edge,axis] (2,0) -- (2,4);
	\draw[edge,axis] (2,0) -- (2,-3.5);
	
	\draw[edge] (1.25,-1.5) -- (-1,-1.5);
	\draw[edge,axis] (1.25,-1.5) -- (1.25,-3.5);
	
	\draw[edge] (-1,-1.5) -- (-2,0.5);
	\draw[edge,axis] (-1,-1.5) -- (-1,-3.5);
	
	\node at (0.75,-0.75) {$K$};
	
	\node at (0.25,0.5) {\footnotesize $u$};
	
	\draw[facet3] (-1.75,0) -- (2,0);
	\draw[red] (-1.75,0) -- (-2,0);
	
	\draw[edge2] (-1.75,-3.5) -- (-1.75,4);
	\draw[edge2] (-2,-3.5) -- (-2,4);
	\fill[facet4] (-2,-3.5) -- (-2,4) -- (-1.75,4) --(-1.75,-3.5)  -- (-2,-3.5);

	\node at (-2.3,3.5) {$\beta^+$};
	\draw[edge,axis] (-2,4) -- (2,4);
	\node at (2.3,4) {\footnotesize $+1$};
	\draw[edge] (-2,3.5) -- (2,3.5);
	\node at (2.3,3.5) {\footnotesize $0$};
	\draw[edge,axis] (-2,3) -- (2,3);
	\node at (2.3,3) {\footnotesize $-1$};
	
	\node at (-2.3,-3) {$\beta^-$};
	\draw[edge,axis] (-2,-3.5) -- (2,-3.5);
	\node at (2.3,-3.5) {\footnotesize $-1$};
	\draw[edge] (-2,-3) -- (2,-3);
	\node at (2.3,-3) {\footnotesize $0$};
	\draw[edge,axis] (-2,-2.5) -- (2,-2.5);
	\node at (2.3,-2.5) {\footnotesize $+1$};
	
	\fill[cyan] (-2, 3.5+0.25) -- (-1, 3.5+0.5) -- (-1, 3.5) -- (-2, 3.5) -- cycle {};
	\draw[edge] (-2, 3.5+0.25) -- (-1, 3.5+0.5);
	
	\fill[orange] (1, 3.5-0.3) -- (-1+2*5/8, 3.5) -- (1, 3.5) -- cycle {};
	\fill[cyan] (-1, 3.5+0.5) -- (-1+2*5/8, 3.5) -- (-1, 3.5) -- cycle {};
	\draw[edge] (-1, 3.5+0.5) -- (1, 3.5-0.3);
	
	\fill[cyan] (2, 3.5+0.2) -- (1+5/8, 3.5+0) -- (2, 3.5+0) -- cycle {};
	\fill[orange] (1, 3.5-0.3) -- (1+5/8, 3.5+0) -- (1, 3.5+0) -- cycle {};
	\draw[edge] (1, 3.5-0.3) -- (2, 3.5+0.2);
	
	\fill[orange] (-2, -3+0.25) -- (-1, -3+0.5) -- (-1, -3) -- (-2, -3) -- cycle {};
	\draw[edge] (-2, -3+0.25) -- (-1, -3+0.5);

	\fill[orange] (-1, -3+0.5) -- (-1, -3) -- (-1+2.25/2,-3) -- cycle {};
	\fill[cyan] (-1+2.25/2,-3) -- (1.25, -3-0.5) -- (1.25, -3) -- cycle {};
	\draw[edge] (-1, -3+0.5) -- (1.25, -3-0.5);
	
	\fill[orange] (2, -3+0.2) --  (1.25+5/7*3/4, -3+0) -- (2, -3+0) -- cycle {};
	\fill[cyan] (1.25, -3-0.5) -- (1.25+5/7*3/4, -3+0) -- (1.25, -3+0) -- cycle {};
	\draw[edge] (1.25, -3-0.5) -- (2, -3+0.2);
	
\end{tikzpicture}
			
			\caption{$K$ and $\beta$ \label{subfig_P_and_f}}
		\end{subfigure}
		\begin{subfigure}[c]{0.4\linewidth}
			\centering
			\begin{tikzpicture}%
	[scale=1.000000,
	back/.style={loosely dotted, thin},
	edge/.style={color=black},
	facet2/.style={fill=cyan,fill opacity=0.200000},
	facet1/.style={fill=orange,fill opacity=0.200000},
	facet/.style={fill=white,fill opacity=1.00000},
	vertex/.style={}]
	
	\coordinate (2.00000, 0.00000) at (2.00000, 0.00000);
	\coordinate (1.00000, -1.50000) at (1.00000, -1.50000);
	\coordinate (-0.50000, -1.50000) at (-0.50000, -1.50000);
	\coordinate (1.00000, 1.50000) at (1.00000, 1.50000);
	\coordinate (-2.00000, 0.00000) at (-2.00000, 0.00000);
	\coordinate (-1.00000, 2.00000) at (-1.00000, 2.00000);
	\fill[facet1]  (-1.00000, -1.50000) -- (-2.00000, 0.50000) -- (-1.00000, 2.00000) -- (1.00000, 1.50000) -- (2.00000, 0.00000) -- (1.25000, -1.50000) -- cycle {};
	\fill[facet2](-1.00000, -1.40000) -- (-2.00000, 0.55000) -- (-1.00000, 2.10000) -- (1.00000, 1.44000) -- (2.00000, 0.04000) -- (1.25000, -1.60000) -- cycle {};
	\fill[facet] (0.12500, -1.50000) -- (1.25000, -1.50000) -- (1.78571, -0.42857) -- (1.98915, 0.01627) -- (1.60000, 0.60000) -- (1.00000, 1.44000) -- (0.25000, 1.68750) -- (-1.00000, 2.00000) -- (-1.98551, 0.52174) -- (-1.00000, -1.40000) -- cycle {};
	
	\draw[edge] (-2.00000, 0.55000) -- (-1.00000, 2.10000);
	\draw[edge] (-2.00000, 0.55000) -- (-1.00000, -1.40000);
	\draw[edge] (-1.00000, 2.10000) -- (1.00000, 1.44000);
	\draw[edge] (1.00000, 1.44000) -- (2.00000, 0.04000);
	\draw[edge] (2.00000, 0.04000) -- (1.25000, -1.60000);
	\draw[edge] (1.25000, -1.60000) -- (-1.00000, -1.40000);
	
	\draw[edge] (-2.00000, 0.50000) -- (-1.00000, 2.00000);
	\draw[edge] (-2.00000, 0.50000) -- (-1.00000, -1.50000);
	\draw[edge] (-1.00000, 2.00000) -- (1.00000, 1.50000);
	\draw[edge] (1.00000, 1.50000) -- (2.00000, 0.00000);
	\draw[edge] (2.00000, 0.00000) -- (1.25000, -1.50000);
	\draw[edge] (1.25000, -1.50000) -- (-1.00000, -1.50000);
	
	\node at (0,0) {$K_{0.1} \triangle K$};
	
\end{tikzpicture}
			\vspace*{0.5cm}
			
			\begin{tikzpicture}%
	[scale=1.000000,
	back/.style={loosely dotted, thin},
	edge/.style={color=black},
	facet2/.style={fill=cyan,fill opacity=0.200000},
	facet1/.style={fill=orange,fill opacity=0.200000},
	facet/.style={fill=white,fill opacity=1.00000},
	vertex/.style={}]
	
	\coordinate (2.00000, 0.00000) at (2.00000, 0.00000);
	\coordinate (1.00000, -1.50000) at (1.00000, -1.50000);
	\coordinate (-0.50000, -1.50000) at (-0.50000, -1.50000);
	\coordinate (1.00000, 1.50000) at (1.00000, 1.50000);
	\coordinate (-2.00000, 0.00000) at (-2.00000, 0.00000);
	\coordinate (-1.00000, 2.00000) at (-1.00000, 2.00000);
	\fill[facet1] (-1.00000, -1.50000) -- (-2.00000, 0.50000) -- (-1.00000, 2.00000) -- (1.00000, 1.50000) -- (2.00000, 0.00000) -- (1.25000, -1.50000) -- cycle {};
	\fill[facet2] (-1.00000, -1.30000) -- (-2.00000, 0.60000) -- (-1.00000, 2.20000) -- (1.00000, 1.38000) -- (2.00000, 0.08000) -- (1.25000, -1.70000) -- cycle {};
	
	\fill[facet] (0.12500, -1.50000) -- (1.25000, -1.50000) -- (1.78571, -0.42857) -- (1.97935, 0.03098) -- (1.60000, 0.60000) -- (1.00000, 1.38000) -- (0.25000, 1.68750) -- (-1.00000, 2.00000) -- (-1.97059, 0.54412) -- (-1.00000, -1.30000) -- cycle {};
	\draw[edge] (-2.00000, 0.60000) -- (-1.00000, 2.20000);
	\draw[edge] (-2.00000, 0.60000) -- (-1.00000, -1.30000);
	\draw[edge] (-1.00000, 2.20000) -- (1.00000, 1.38000);
	\draw[edge] (1.00000, 1.38000) -- (2.00000, 0.08000);
	\draw[edge] (2.00000, 0.08000) -- (1.25000, -1.70000);
	\draw[edge] (1.25000, -1.70000) -- (-1.00000, -1.30000);
	
	\draw[edge] (-2.00000, 0.50000) -- (-1.00000, 2.00000);
	\draw[edge] (-2.00000, 0.50000) -- (-1.00000, -1.50000);
	\draw[edge] (-1.00000, 2.00000) -- (1.00000, 1.50000);
	\draw[edge] (1.00000, 1.50000) -- (2.00000, 0.00000);
	\draw[edge] (2.00000, 0.00000) -- (1.25000, -1.50000);
	\draw[edge] (1.25000, -1.50000) -- (-1.00000, -1.50000);

	\node at (0,0) {$K_{0.2} \triangle K$};
\end{tikzpicture}
			
			\caption{two elements of $(K_t)_{t \in [0,1]}$}%
	\end{subfigure}
	\caption{\label{fig_generalized_RS_movement} An illustration of Definition \ref{def_generalized_RS_movement} in the case where $K$ is a polygon. In Subfigure (A), the red area corresponds to the set where $\beta^+$ and $\beta^-$ have to be equal by condition \ref{def_generalized_RS_movement_2}. In Subfigure (B), the sets $K_{0.1} \setminus K$ and $K_{0.2} \setminus K$ are shaded in cyan, whereas the sets $K \setminus K_{0.1}$ and $K \setminus K_{0.2}$ are shaded in orange.}
	\end{figure}
	
	\begin{proof}
		We begin with \ref{lemma_generalized_RS_movement_basic_properties_1}. Let $x \in \relint \pi_{u^\bot}(K)$ and let
		\begin{equation}
			v \coloneqq \argmax_{y \in (x+L) \cap K} \scpr{y,u} \quad \text{and} \quad w \coloneqq \argmin_{y \in (x+L) \cap K} \scpr{y,u}.
		\end{equation}
		By construction, the points $v$ and $w$ move with constant velocities $\beta^+(x)$ and $\beta^-(x)$, respectively, along the line $x+L$, as $t$ varies from $t_1$ to $t_2$. We claim that the chord $(x+L)\cap K_t$ is the convex hull of $v+t \cdot \beta(v)$ and $w+t \cdot \beta(w)$ throughout the generalized RS-movement, i.e., that we have
		\begin{equation}
			\{y+t\cdot \beta(y) u \mid y \in (x+L)\cap K\} = [v+t \cdot \beta(v), w+t \cdot \beta(w)]= [v+t \cdot \beta^+(x), w+t \cdot \beta^-(x)]
		\end{equation}
		for all $t \in [t_1,t_2]$. To see this, we observe that, since $\beta$ is affine on each chord,
		\begin{align}
			\lambda v + (1-\lambda) w + t\cdot\beta(\lambda v + (1-\lambda) w) &= \lambda v + (1-\lambda) w + \lambda \cdot t\cdot\beta(v) + (1-\lambda)\cdot t\cdot \beta(w)\\
			&= \lambda (v+t\cdot\beta(v)) + (1-\lambda) (w +t\cdot \beta(w))
		\end{align}
		for all $\lambda \in [0,1]$ and $t \in [t_1,t_2]$. To prove \eqref{eq_lemma_generalized_RS_movement_basic_properties}, we have to show that the moving points $v$ and $w$ do not pass each other, i.e., that we have
		\begin{equation}
			\scpr{w+t\cdot \beta^-(x),u} < \scpr{v+t\cdot \beta^+(x),u} 
		\end{equation}
		for all $t \in [t_1,t_2]$. We assume towards a contradiction that there exists a $t^* \in [t_1,t_2]$ with $w+t^*\cdot \beta^-(x)=v+t^*\cdot \beta^+(x)$. Then $\volt{1}[(x+L)\cap K_{t^*}] =0$, and since
		\begin{equation}
			h\colon \pi_{u^\bot}(K) \rightarrow \R, \quad y \mapsto \volt{1}[(y+L)\cap K_{t^*}]
		\end{equation}
		is concave and non-negative, it follows that $h$ is identically zero on $\pi_{u^\bot}(K)$. This is a contradiction to the assumption that $K_{t^*}$ is a full-dimensional convex body.
		
		To prove \ref{lemma_generalized_RS_movement_basic_properties_2}, we note that by \ref{lemma_generalized_RS_movement_basic_properties_1} there exists a $t \neq 0$ with
		\begin{equation}
			\beta^-|_{\relint \pi_{u^\bot}(K)} = \frac{1}{t} \left(f_{K_t,u} - f_{K_0,u}\right)|_{\relint \pi_{u^\bot}(K)}
		\end{equation}
		and the right-hand side is continuous because the convex functions $f_{K_t,u}$ and $f_{K_0,u}$ are continuous on $\relint \pi_{u^\bot}(K)$. The continuity of $\beta^+|_{\relint \pi_{u^\bot}(K)}$ follows similarly.
		
		For \ref{lemma_generalized_RS_movement_basic_properties_3}, we first observe that
		\begin{equation}
			K_t = \cl [K_t \cap \pi_{u^\bot}^{-1}(\relint \pi_{u^\bot}(K))],
		\end{equation}
		which follows from the fact that $K_t$ is closed and convex for all $t\in[t_1,t_2]$. If there exists a point $x \in K$ with $\beta(x) \neq 0$, then there exists a $t\in [t_1,t_2]$ with $K_t \neq K_0$ and hence
		\begin{equation}
			K_t \cap \pi_{u^\bot}^{-1}(\relint \pi_{u^\bot}(K)) \neq K_0 \cap \pi_{u^\bot}^{-1}(\relint \pi_{u^\bot}(K)).
		\end{equation}
		Now the claim follows from \ref{lemma_generalized_RS_movement_basic_properties_1}.
	\end{proof}

	We have already mentioned in Section \ref{sect_second_derivative} that RS-movements are an example of admissible variations. The following lemma shows that the same can be said about generalized RS-movements.
	\begin{lemma} \label{lemma_generalized_RS_movement_admissible_variation}
		Let $[t_1,t_2]\rightarrow \Knn, t \mapsto K_t$ be a generalized RS-movement of $K\in \Kno$ in direction $u$ with speed function $\beta$. Then $[t_1,t_2]\rightarrow \Knn, t \mapsto K_t$ is an admissible variation of $K$ with
		\begin{equation}
			\ddt \int_{K_t} h(x) \dint x = \int_{\pi_{u^\bot}(K)} \left(\beta^+(x) \cdot h(x-g_{K_t,u}(x)\cdot u)- \beta^-(x) \cdot h(x+f_{K_t,u}(x)\cdot u)\right) \dint x
		\end{equation}
		for all $h \in C(\R^n)$, $t \in [t_1,t_2]$ and
		\begin{align}
			\left.\ddtn{2}\int_{K_t} [\norm{x}_2^2-(n+2)] \dint x\right|_{t=0} =-2\int_{\pi_{u^\bot}(K)} \left(\beta^+(x)^2 \cdot g_{K,u}(x) +\beta^-(x)^2 \cdot f_{K,u}(x)\right) \dint x.
		\end{align}
	\end{lemma}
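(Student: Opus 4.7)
The plan is to reduce everything to Fubini together with Lemma~\ref{lemma_generalized_RS_movement_basic_properties}\ref{lemma_generalized_RS_movement_basic_properties_1}. Starting from
\begin{equation}
\int_{K_t} h(x) \dint x = \int_{\pi_{u^\bot}(K)} \int_{f_{K_t,u}(x)}^{-g_{K_t,u}(x)} h(x+su) \dint s \dint x,
\end{equation}
and noting that the endpoints of the inner integral are affine in $t$ with derivatives $\ddt f_{K_t,u}(x) = \beta^-(x)$ and $\ddt(-g_{K_t,u}(x)) = \beta^+(x)$ on $\relint \pi_{u^\bot}(K)$, the Leibniz rule yields the first displayed identity. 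The right-hand side is linear in $h$, and since $\beta^+,\beta^-$ are bounded on $\pi_{u^\bot}(K)$ (as differences of bounded convex functions) and $\pi_{u^\bot}(K)$ is compact, it is continuous on $(C(\R^n),\norm{\cdot}_\infty)$. Thus $t\mapsto K_t$ is weakly differentiable at every $t\in[t_1,t_2]$, which takes care of part (i) of the admissibility condition.

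For the second-derivative formula, differentiate the first-derivative formula once more in $t$. By Lemma~\ref{lemma_generalized_RS_movement_basic_properties}\ref{lemma_generalized_RS_movement_basic_properties_1}, the evaluation points $x - g_{K_t,u}(x)u$ and $x + f_{K_t,u}(x)u$ move along the line $x+\linh\{u\}$ with constant velocities $\beta^+(x)u$ and $\beta^-(x)u$ respectively. The integrand $\norm{\cdot}^2$ has gradient $2z$ at a point $z$, so the chain rule combined with $\SCPR{x,u}=0$ for $x\in u^\bot$ gives, at $t=0$,
\begin{equation}
\left.\ddt \norm{x - g_{K_t,u}(x)u}^2\right|_{t=0} = -2\beta^+(x)g_{K,u}(x) \eqand \left.\ddt \norm{x + f_{K_t,u}(x)u}^2\right|_{t=0} = 2\beta^-(x)f_{K,u}(x).
\end{equation}
Substituting into the first-derivative formula produces the contribution $-2\int_{\pi_{u^\bot}(K)}[\beta^+(x)^2 g_{K,u}(x) + \beta^-(x)^2 f_{K,u}(x)]\dint x$.

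The $-(n+2)$ summand is handled by applying the first-derivative formula with $h\equiv 1$, which gives $\ddt \vol(K_t) = \int_{\pi_{u^\bot}(K)}(\beta^+(x) - \beta^-(x))\dint x$; this is $t$-independent, so its second derivative vanishes and does not contribute. Combining both pieces yields the announced identity, and the twice differentiability of $t\mapsto \int_{K_t}\norm{x}^2\dint x$ and $t\mapsto \vol(K_t)$, which is part (ii) of the admissibility definition, comes out as a byproduct. The only genuine technical obstacle is justifying the interchange of differentiation and integration, because $\beta^\pm$ and the boundary functions $f_{K_t,u},g_{K_t,u}$ are only guaranteed to be continuous on $\relint \pi_{u^\bot}(K)$. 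Since the relative boundary of $\pi_{u^\bot}(K)$ has Lebesgue measure zero and the integrands are uniformly bounded for $t$ in a neighbourhood of $0$ (all $K_t$ sit in a common compact set), dominated convergence supplies the justification.
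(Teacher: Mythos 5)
Your proof is correct and follows essentially the same route as the paper: Fubini to reduce to an integral over $\pi_{u^\bot}(K)$, then the Leibniz integral rule applied twice using the affine‑in‑$t$ form of $f_{K_t,u}$ and $g_{K_t,u}$ from Lemma~\ref{lemma_generalized_RS_movement_basic_properties}\ref{lemma_generalized_RS_movement_basic_properties_1}, and the measure‑zero of $\relbd \pi_{u^\bot}(K)$ plus dominated convergence to justify differentiating under the integral sign. The paper phrases the second differentiation as ``apply the Leibniz integral rule a second time'' whereas you differentiate the first‑derivative identity explicitly using $\scpr{x,u}=0$ so that $\norm{x\pm cu}^2=\norm x^2+c^2$, but this is the same computation.
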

	
	\begin{proof}
		Let $h \in C(\R^n)$. By Fubini's theorem, we have
		\begin{equation}
			\int_{K_t} h(x) \dint x = \int_{\pi_{u^\bot}(K_t)} \int_{f_{K_t,u}(x)}^{-g_{K_t,u}(x)} h(x+ s \cdot u) \dint s \dint x.
		\end{equation}
		Using that $\volt{n-1}[\relbd \pi_{u^\bot}(K)]=0$ and applying the Leibniz integral rule \cite[Thm.~2.27]{fol99} in combination with Lemma \ref{lemma_generalized_RS_movement_basic_properties}\ref{lemma_generalized_RS_movement_basic_properties_1}, we get
		\begin{align}
			\ddt\int_{K_t} h(x) \dint x &= \int_{\relint \pi_{u^\bot}(K)} \ddt\int_{f_{K_t,u}(x)}^{-g_{K_t,u}(x)} h(x+s \cdot u) \dint s \dint x\\
			=&\int_{\pi_{u^\bot}(K)} \left(\beta^+(x) \cdot h(x-g_{K_t,u}(x)\cdot u)- \beta^-(x) \cdot h(x+f_{K_t,u}(x)\cdot u)\right) \dint x.
		\end{align}
		Applying the Leibniz integral rule a second time, we obtain
		\begin{align}
			\left.\ddtn{2}\int_{K_t} \norm{x}_2^2 \dint x\right|_{t=0} =-2\int_{\pi_{u^\bot}(K)} \left(\beta^+(x)^2 \cdot g_{K,u}(x) +\beta^-(x)^2 \cdot f_{K,u}(x)\right) \dint x
		\end{align}
		and
		\begin{equation}
			\left.\ddtn{2} \vol(K_t) \right|_{t=0} =0. \qedhere
		\end{equation}
	\end{proof}
	
	In Definition \ref{def_generalized_RS_movement}, we required that the speed function $\beta$ is constant on all chords that do not intersect the hyperplane $u^\bot$. The purpose of this somewhat technical condition is to ensure the validity of the following lemma.
	
	\begin{lemma} \label{lemma_generalized_RS_movement_second_derivative}
		Let $K \in \Kno$ and let $[t_1,t_2]\rightarrow \Knn, t \mapsto K_t$ be a generalized RS-movement of $K$ with non-vanishing speed function $\beta$. Then we have
		\begin{equation} \label{eq_lemma_generalized_RS_movement_second_derivative}
			\left.\ddtn{2}\int_{K_t} [\norm{x}_2^2-(n+2)] \dint x \right|_{t=0}>0.
		\end{equation}
	\end{lemma}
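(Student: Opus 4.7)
The plan is to exploit the closed-form expression for the second derivative furnished by Lemma~\ref{lemma_generalized_RS_movement_admissible_variation},
\begin{equation*}
\left.\ddtn{2}\int_{K_t} \norm{x}^2-(n+2) \dint x\right|_{t=0} = -2\int_{\pi_{u^\bot}(K)} \bigl[\beta^+(x)^2 g_{K,u}(x) + \beta^-(x)^2 f_{K,u}(x)\bigr] \dint x,
\end{equation*}
and to show that the bracketed integrand is pointwise non-positive on $\pi_{u^\bot}(K)$ and strictly negative at some point (hence, by continuity, on an open set of positive $(n-1)$-dimensional Lebesgue measure). The overall factor $-2$ will then deliver the claimed strict positivity.

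For pointwise non-positivity, I would partition $\pi_{u^\bot}(K) = A \cup B$ with $A := K \cap u^\bot$ (the chord $(x+L)\cap K$ meets $u^\bot$) and $B := \pi_{u^\bot}(K)\setminus K$ (it does not). On $A$, the defining inequality $f_{K,u}(x) \le 0 \le -g_{K,u}(x)$ yields both $f_{K,u}(x) \le 0$ and $g_{K,u}(x) \le 0$, so each summand of the bracket is a product of a non-negative square with a non-positive factor. On $B$, condition~\ref{def_generalized_RS_movement_2} of Definition~\ref{def_generalized_RS_movement} forces $\beta^+(x) = \beta^-(x) =: \gamma(x)$, collapsing the bracket to $\gamma(x)^2\bigl(f_{K,u}(x) + g_{K,u}(x)\bigr) \le 0$, where the non-positivity comes again from $f_{K,u}(x) \le -g_{K,u}(x)$. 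This is precisely the purpose of condition~\ref{def_generalized_RS_movement_2} in the definition of a generalized RS-movement.

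For strict negativity, Lemma~\ref{lemma_generalized_RS_movement_basic_properties}\ref{lemma_generalized_RS_movement_basic_properties_3} supplies some $x_0 \in \relint \pi_{u^\bot}(K)$ with $\beta^+(x_0) \ne 0$ or $\beta^-(x_0) \ne 0$, and Lemma~\ref{lemma_generalized_RS_movement_basic_properties}\ref{lemma_generalized_RS_movement_basic_properties_2} ensures the chosen function stays nonzero on an open neighborhood $U \subset \relint \pi_{u^\bot}(K)$. Since the relative boundary of $A$ in $u^\bot$ is at most $(n-2)$-dimensional while $U$ is $(n-1)$-dimensionally open, $U$ must intersect either the $u^\bot$-interior of $A$, where $0 \in \interior K$ forces $f_{K,u}(x) < 0$ and $g_{K,u}(x) < 0$ strictly, or $B$, where the chord $(x+L)\cap K$ still has strictly positive length (by concavity of the chord-length function combined with $x \in \relint \pi_{u^\bot}(K)$), giving $f_{K,u}(x) + g_{K,u}(x) < 0$. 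In either case, at a suitable point in $U$ the integrand is strictly negative, and the continuity of $\beta^\pm, f_{K,u}, g_{K,u}$ on $\relint \pi_{u^\bot}(K)$ propagates this to an open subset.

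The main technical obstacle is the last step: promoting a single point of non-vanishing of $\beta^\pm$ to a set of positive measure where the integrand's sign is strict. The dimension argument via $\dim \bd A \le n-2$, together with the case distinction on whether the neighborhood $U$ reaches $\interior_{u^\bot} A$ or $B$, is what makes this work; everything else is a direct consequence of the formulas already established in Lemmas~\ref{lemma_generalized_RS_movement_basic_properties} and~\ref{lemma_generalized_RS_movement_admissible_variation}.
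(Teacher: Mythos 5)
Your proof is correct, and the first half (the pointwise non-positivity of $\beta^+(x)^2 g_{K,u}(x) + \beta^-(x)^2 f_{K,u}(x)$, split into the cases $x \in K$ and $x \notin K$) coincides exactly with the paper's argument. Where you diverge is in promoting non-positivity to strict negativity of the integral. The paper argues by contradiction: it assumes the integrand vanishes identically on a neighborhood $B$ where $\beta^+ \neq 0$, observes that $N := \{g_{K,u} = 0\}$ has measure zero (because $0 \in \interior K$), and then shows that on $B \setminus N$ the assumed identity forces $f_{K,u}$ and $g_{K,u}$ to be nonzero with opposite signs, hence $x \notin K$, hence $\beta^+ = \beta^-$ by condition \ref{def_generalized_RS_movement_2}, hence $f_{K,u} = -g_{K,u}$, i.e.\ a chord of length zero at a point of $\relint\pi_{u^\bot}(K)$ — a contradiction to full-dimensionality. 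You instead argue directly: a dimension count ($\bd_{u^\bot}(K\cap u^\bot)$ is $(n-2)$-dimensional while $U$ is open and $(n-1)$-dimensional) places a point of $U$ either in $\interior_{u^\bot}(K\cap u^\bot)$ or in $\pi_{u^\bot}(K)\setminus K$, and you verify strict negativity of the integrand at such a point separately in each case. This requires the additional observation (correct, though stated tersely) that $\interior_{u^\bot}(K\cap u^\bot) \subset \interior K$ when $0 \in \interior K$ — a line-segment argument through the origin — whereas the paper never needs to touch $\interior_{u^\bot}(K\cap u^\bot)$ because its contradiction hypothesis algebraically excludes $x \in K$ before any geometry is needed. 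Conversely, the paper needs the measure-zero observation about $N$, which your argument avoids. Both routes lean on the same two facts — condition \ref{def_generalized_RS_movement_2} forcing $\beta^+=\beta^-$ off $K$, and positivity of the chord-length function on $\relint\pi_{u^\bot}(K)$ — so the approaches are closely allied; yours is a clean direct version that may be slightly easier to follow, at the cost of the extra interior-inclusion lemma.
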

	\begin{proof}
		We already know from Lemma \ref{lemma_generalized_RS_movement_admissible_variation} that
		\begin{align}
			\left.\ddtn{2}\int_{K_t} [\norm{x}_2^2-(n+2)] \dint x\right|_{t=0} =-2\int_{\pi_{u^\bot}(K)} \left(\beta^+(x)^2 \cdot g_{K,u}(x) +\beta^-(x)^2 \cdot f_{K,u}(x)\right) \dint x.
		\end{align}
		Let $x \in \pi_{u^\bot}(K)$. We claim that
		\begin{equation}\label{eq_lemma_generalized_RS_movement_claim}
			\beta^+(x)^2 \cdot g_{K,u}(x) +\beta^-(x)^2 \cdot f_{K,u}(x) \leq 0. 
		\end{equation}
		To see this, we distinguish two cases:
		\begin{enumerate}[label=(\alph*)]
			\item If $x \in K$, then $f_{K,u}(x) \leq 0 \leq -g_{K,u}(x)$ and \eqref{eq_lemma_generalized_RS_movement_claim} follows.
			\item If $x \notin K$, then condition \ref{def_generalized_RS_movement_2} in Definition \ref{def_generalized_RS_movement} implies that $\beta^+(x)=\beta^-(x)$. In combination with $f_{K,u}(x) \leq -g_{K,u}(x)$,  \eqref{eq_lemma_generalized_RS_movement_claim} follows.
		\end{enumerate}
		By Lemma \ref{lemma_generalized_RS_movement_basic_properties}\ref{lemma_generalized_RS_movement_basic_properties_2}, the left-hand side of \eqref{eq_lemma_generalized_RS_movement_claim} is a continuous function of $x$ on $\relint \pi_{u^\bot}(K)$. Therefore, it remains to show that there exists an $x\in \relint \pi_{u^\bot}(K)$ where $\eqref{eq_lemma_generalized_RS_movement_claim}$ holds with strict inequality.
		By Lemma \ref{lemma_generalized_RS_movement_basic_properties}\ref{lemma_generalized_RS_movement_basic_properties_3}, there exists a point $x \in \relint\pi_{u^\bot}(K)$ with $\beta^+(x)\neq 0$ or $\beta^-(x)\neq 0$. Without loss generality, let $\beta^+(x) \neq 0$. By the continuity of $\beta^+$ on $\relint \pi_{u^\bot}(K)$, there exists a neighborhood $B \subset \pi_{u^\bot}(K)$ of $x$ with $\beta^+(y) \neq 0$ for all $y \in B$. We assume towards a contradiction that
		\begin{equation} \label{eq_lemma_generalized_RS_movement_negation_of_claim}
			\beta^-(x)^2 \cdot f_{K,u}(x) =-\beta^+(x)^2 \cdot g_{K,u}(x) \quad \text{for all } x \in B. 
		\end{equation}
		Because $0 \in \interior K$, the set $N \coloneqq \{x \in \pi_{u^\bot}(K) \mid g_{K,u}(x)=0\}$ has volume zero. On $B\setminus N$, \eqref{eq_lemma_generalized_RS_movement_negation_of_claim} implies that $f_{K,u}(x)$ and $g_{K,u}(x)$ are non-zero and have opposing signs. But now property \ref{def_generalized_RS_movement_2} from Definition \ref{def_generalized_RS_movement} implies that $\beta^+(y)=\beta^-(y)$ for all $y \in B \setminus N$. From \eqref{eq_lemma_generalized_RS_movement_negation_of_claim}, we get that $f_{K,u}(x) = - g_{K,u}(x)$ for all $y \in B \setminus N$, a contradiction to the fact that $K_0$ is full-dimensional. 
	\end{proof}
	
	We now have all prerequisites at hand that are needed for our proof of Theorem \ref{thm_ccg}.
	
	\begin{proof}[Proof of Theorem \ref{thm_ccg}]
		Let $s \in (-1,1)$. Our goal is to show that $\left.\ddtn{2}L_{K_t}^{2n}\right|_{t=s}>0$. Since the isotropic constant is an affine invariant, we can assume without loss of generality that $K_s$ is isotropic and that $u$ is the $n$-th standard basis vector $e_n$. Moreover, by a simple reparameterization of the RS-movement $(K_t)_{t \in [-1,1]}$, we can assume that $s=0$. Let $V$ be the $n$-dimensional vector space of affine functions $\pi_{u^\bot}(K) \rightarrow \R$. Since $\beta$ is the speed function of an RS-movement, there exists a function $\tilde{\beta} \colon \pi_{u^\bot}(K) \rightarrow \R$ with $\beta = \tilde{\beta} \circ \pi_{u^\bot}$. By Definition \ref{def_RS_decomposable}, $\tilde{\beta}$ is not affine, and hence the vector space
		\begin{equation}
			W \coloneqq \linh \{\tilde{\beta}\} \oplus V \subset C(\pi_{u^\bot}(K))
		\end{equation}
		is $(n+1)$-dimensional. Moreover, every function $\alpha \in W$ induces an RS-movement $t \mapsto K_t^\alpha$ with speed function $\alpha \circ \pi_{u^\bot}$. That $K_t^\alpha$ is indeed convex for all $t \in [-1,1]$ follows from the fact that $K_t^\alpha$ is simply an affine image of $K_t$ for all $t \in [-1,1]$. By Lemma \ref{lemma_generalized_RS_movement_admissible_variation}, the map
		\begin{equation}
			A \colon W \mapsto \R^{n-1} \times \R, \quad A(\alpha) \coloneqq \left(\left[\left.\ddt \int_{K^\alpha_t} x_i x_n \dint x\right|_{t=0}\right]_{i=1}^{n-1},\left.\ddt \int_{K^\alpha_t} x_n \dint x\right|_{t=0}\right)
		\end{equation}
		is linear, and since
		\begin{equation}
			\dim (\R^{n-1} \times \R)  < n+1 =\dim W,
		\end{equation}
		the map $A$ has a non-trivial kernel. Let $\alpha \in \ker A \setminus\{0\}$. Because $K_t^\alpha$ is an affine image of $K_t$, we have $L_{K_t^\alpha}=L_{K_t}$ for all $t \in [-1,1]$. To prove the theorem, it remains to show that $\left.\ddtn{2}L_{K_t^\alpha}^{2n}\right|_{t=0}>0$. For this, let $i,j \in [n-1]$. %
		Using Fubini's theorem and the fact that $(K^\alpha_t)_{t\in[-1,1]}$ is an RS-movement, we have
		\begin{align}
			\int_{K^\alpha_t} x_i x_j \dint x &= \int_{\pi_{u^\bot}(K)} [-g_{K_t^\alpha,u}(x)-f_{K_t^\alpha,u}(x)]\cdot x_i x_j \dint x \\&= \int_{\pi_{u^\bot}(K)} [-g_{K,u}(x)-f_{K,u}(x)]\cdot x_i x_j \dint x = \int_{K} x_i x_j \dint x
		\end{align}
		for all $t \in [-1,1]$ and hence $\left.\ddt \int_{K^\alpha_t} x_i x_j \dint x\right|_{t=0} = 0$. By a similar argument, we conclude that $\left.\ddt \int_{K^\alpha_t} x_i \dint x\right|_{t=0} = 0$ for all $i\in [n-1]$. Recalling that $\alpha \in \ker A$, we observe that the map $t \mapsto K_t^\alpha$ satisfies the assumptions of Proposition \ref{prop_L_K_second_derivative}. Because $(K^\alpha_t)_{t\in[-1,1]}$ is an RS-movement, we have $\left.\ddt \vol(K^\alpha_t)\right|_{t=0}=0$, and Proposition \ref{prop_L_K_second_derivative} yields
		\begin{align}
			\left.\ddtn{2}L_{K^\alpha_t}^{2n}\right|_{t=0}
			&= \frac{1}{\vol(K_0)^4}\left.\left[\left(\ddt \int_{K^\alpha_t} \norm{x}_2^2 \dint x\right)^2-\sum_{i,j=1}^n\left(\ddt\int_{K^\alpha_t}x_ix_j\dint x\right)^2\right]\right|_{t=0}\\
			&\quad\quad\quad\left.+\frac{1}{\vol(K_0)^3} \ddtn{2}\int_{K^\alpha_t}[\norm{x}_2^2-(n+2)]\dint x\right|_{t=0}.
		\end{align}
		Since the matrix $\left.\ddt\int_{K^\alpha_t}xx^\transp\dint x\right|_{t=0}$ has only one non-zero entry, which is located in the $n$-th row and the $n$-th column, we have
		\begin{equation}
			\left(\ddt \int_{K^\alpha_t} \norm{x}_2^2 \dint x\right)^2= \left(\ddt \int_{K^\alpha_t} x_n^2 \dint x\right)^2= \sum_{i,j=1}^n\left(\ddt\int_{K^\alpha_t}x_ix_j\dint x\right)^2
		\end{equation}
		and, using Lemma \ref{lemma_generalized_RS_movement_second_derivative}, we conclude that
		\begin{equation}
			\left.\ddtn{2}L_{K^\alpha_t}^{2n}\right|_{t=0}
			=\left.\frac{1}{\vol(K_0)^3} \ddtn{2}\int_{K^\alpha_t}[\norm{x}_2^2-(n+2)]\dint x\right|_{t=0}>0. \qedhere
		\end{equation}
	\end{proof}
	
	As a byproduct of the concepts developed in this section, we obtain the following necessary condition for local maximizers of the isotropic constant.
	
	\begin{proposition} \label{prop_generalized_RS_movements}
		Let $K \in \Knn$ be isotropic and let $t \mapsto K_t$ be a generalized RS-movement of $K$ with non-vanishing speed function. If $t \mapsto K_t$ satisfies
		\begin{equation} \label{eq_prop_generalized_RS_movements}
			\left.\ddt\int_{K_t}x_ix_j\dint x\right|_{t=0} = 0 \quad \text{and} \quad \left.\ddt\int_{K_t}x_i\dint x\right|_{t=0} = 0 \quad \text{for all } i, j \in \{1,\dots,n\},
		\end{equation}
		then $K$ is not a local maximizer of the isotropic constant.
	\end{proposition}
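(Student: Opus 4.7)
The plan is to argue by contradiction, showing that the hypotheses force $t=0$ to be a strict local minimum of $t \mapsto L_{K_t}^{2n}$. The structure mirrors the end of the proof of Theorem \ref{thm_ccg}, but here the vanishing conditions in \eqref{eq_prop_generalized_RS_movements} are given by hypothesis rather than produced via a kernel argument.

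First, I would verify that the machinery of Section \ref{sect_second_derivative} applies. By Lemma \ref{lemma_generalized_RS_movement_admissible_variation}, $t \mapsto K_t$ is an admissible variation of $K$, and the assumption $\ddt \int_{K_t} x_i \dint x|_{t=0} = 0$ together with the isotropy of $K$ (so $c_K = 0$) ensures that its weak derivative at $t=0$ is centered. Hence Propositions \ref{prop_rademacher} and \ref{prop_L_K_second_derivative} and Corollary \ref{cor_first_order_condition} are all available.

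Assuming for contradiction that $K$ is a local maximizer, I would next exploit the hypotheses to eliminate the first-derivative terms in Proposition \ref{prop_L_K_second_derivative}. Summing $\ddt \int_{K_t} x_i x_j \dint x|_{t=0} = 0$ over $i = j$ gives $\ddt \int_{K_t} \norm{x}^2 \dint x|_{t=0} = 0$. Combining this with the first-order necessary condition of Corollary \ref{cor_first_order_condition} (applied to the two-sided generalized RS-movement at the interior point $t^* = 0$) yields $(n+2)\,\ddt \vol(K_t)|_{t=0} = 0$, hence $\ddt \vol(K_t)|_{t=0} = 0$ as well.

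Plugging these vanishings into the formula from Proposition \ref{prop_L_K_second_derivative}, every squared first-derivative term on the right-hand side collapses, leaving
$$\left.\ddtn{2} L_{K_t}^{2n}\right|_{t=0} \;=\; \frac{1}{\vol(K)^3}\left.\ddtn{2}\int_{K_t}[\norm{x}^2-(n+2)]\dint x\right|_{t=0}.$$
Since $\beta$ is non-vanishing, Lemma \ref{lemma_generalized_RS_movement_second_derivative} shows the right-hand side is strictly positive. Combined with $\ddt L_{K_t}^{2n}|_{t=0} = 0$, this makes $t = 0$ a strict local minimum of $t \mapsto L_{K_t}^{2n}$, contradicting the assumption that $K$ is a local maximizer. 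The proposition is essentially a clean assembly of the tools developed earlier in the section; the main substantive work has already been done in Lemma \ref{lemma_generalized_RS_movement_second_derivative} and Proposition \ref{prop_L_K_second_derivative}, so I do not anticipate a real obstacle beyond careful bookkeeping.
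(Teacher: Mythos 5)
Your proposal is correct and follows essentially the same route as the paper's proof: use the hypothesis (which in particular forces $\ddt\int_{K_t}\norm{x}^2\dint x|_{t=0}=0$) together with Corollary \ref{cor_first_order_condition} to conclude $\ddt\vol(K_t)|_{t=0}=0$, collapse the formula of Proposition \ref{prop_L_K_second_derivative} accordingly, and invoke Lemma \ref{lemma_generalized_RS_movement_second_derivative} for strict positivity of the remaining term. The extra observation that the weak derivative is centered is a small bit of bookkeeping the paper leaves implicit, but it is needed to apply Proposition \ref{prop_L_K_second_derivative} and you are right to check it.
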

	
	\begin{proof}
		We assume towards a contradiction that $K$ is a local maximizer. By Corollary \ref{cor_first_order_condition}, we have
		\begin{equation}
			(n+2) \left.\ddt \vol(K_t) \right|_{t=0} = \left. \ddt\int_{K_t} \norm{x}_2^2 \dint x \right|_{t=0} = 0.
		\end{equation}
		Now Proposition \ref{prop_L_K_second_derivative} and Lemma \ref{lemma_generalized_RS_movement_second_derivative} imply that
		\begin{equation}
			\left.\ddtn{2}L_{K_t}^{2n}\right|_{t=0}
			=\frac{1}{\vol(K)^3} \left. \ddtn{2}\int_{K_t}[\norm{x}_2^2-(n+2)]\dint x\right|_{t=0}>0,
		\end{equation}
		which is a contradiction to the assumption that $K$ is a local maximizer of $K \mapsto L_K$.
	\end{proof}
	
	Although Proposition \ref{prop_generalized_RS_movements} sheds some light on the geometric mechanism underlying Theorem \ref{thm_ccg}, the condition \eqref{eq_prop_generalized_RS_movements} may seem somewhat contrived. What qualitative features of the convex body $K$ can ensure the existence of a generalized RS-movement of $K$ that fulfills the quantitative condition \eqref{eq_prop_generalized_RS_movements}? One possible answer is suggested by reconsidering the dimension argument that we invoked in the proof of Theorem \ref{thm_ccg}. %
	\begin{definition} \label{def_GRS}
		For $K \in \Kno$ and $u \in \sphere$, let $\GRS(K,u)$ be the set of functions $\beta \colon K \rightarrow \R$ such that $\beta$ is the speed function of a generalized RS-movement $[a^\beta,b^\beta] \rightarrow \Knn$, $t \mapsto K_t^\beta$ of $K$ in direction $u$.
	\end{definition}
	We note that the domain $[a^\beta,b^\beta]$ in Definition \ref{def_GRS} is allowed to depend on $\beta$. This entails that $\GRS(K,u)$ is a vector space. Because $\GRS(K,u)$ contains all affine functions on $\pi_{u^\bot}(K)$, we always have $\dim \GRS(K,u) \geq n$.
	By Lemma \ref{lemma_generalized_RS_movement_admissible_variation}, the map
	\begin{equation}
		a_h \colon \GRS(K,u) \rightarrow \R, \quad a_h(\beta) \coloneqq \left. \ddt\int_{K^\beta_t} h(x) \dint x \right|_{t=0}
	\end{equation}
	is linear, and if $\dim \GRS(K,u)$ is larger than the number of linear equations in \eqref{eq_prop_generalized_RS_movements}, then the existence of a non-trivial generalized RS-movement of $K$ in direction $u$ that satisfies \eqref{eq_prop_generalized_RS_movements} is a matter of elementary linear algebra (for details, see the proof of Theorem \ref{thm_minkowski_summands} below). We record this fact as a corollary.
	
	\begin{corollary} \label{cor_generalized_RS_movements_dim}
		Let $K \subset \R^n$ be an isotropic convex body and let $\GRS(K,u)$ be the vector space of speed functions of generalized RS-movements of $K$ in direction $u$. If
		\begin{equation} \label{eq_cor_generalized_RS_movements_dim}
			\dim \GRS(K,u) > \frac{n(n+1)}{2}+n= \frac{n^2+3n}{2},
		\end{equation}
		then $K$ is not a local maximizer of the isotropic constant.
	\end{corollary}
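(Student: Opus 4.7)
The plan is to derive the corollary directly from Proposition \ref{prop_generalized_RS_movements} by a dimension-counting argument in the spirit of the proof of Theorem \ref{thm_ccg}. As a first step, I will check that the constraint functionals $a_h\colon \GRS(K,u) \to \R$ introduced just before the corollary statement are genuinely linear, using the explicit formula
\[
a_h(\beta) = \int_{\pi_{u^\bot}(K)} \beta^+(x)\,h(x - g_{K,u}(x)\cdot u) - \beta^-(x)\,h(x + f_{K,u}(x)\cdot u) \dint x
\]
from Lemma \ref{lemma_generalized_RS_movement_admissible_variation} together with the fact that $\beta \mapsto \beta^+$ and $\beta \mapsto \beta^-$ are linear by \eqref{eq_def_beta_plus_minus}, while $f_{K,u}$ and $g_{K,u}$ are fixed data of $K$.

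Next, I will bundle the constraints from \eqref{eq_prop_generalized_RS_movements} into a single linear map
\[
A\colon \GRS(K,u) \to \R^{n(n+1)/2} \oplus \R^n, \quad \beta \mapsto ((a_{x_ix_j}(\beta))_{i\leq j}, (a_{x_i}(\beta))_i),
\]
whose codomain has dimension $n(n+1)/2 + n = (n^2+3n)/2$. By hypothesis \eqref{eq_cor_generalized_RS_movements_dim} and the rank-nullity theorem, $\ker A$ is nontrivial. Picking any nonzero $\beta \in \ker A$ yields a generalized RS-movement $t \mapsto K_t^\beta$ of $K$ whose speed function is not identically zero (hence non-vanishing in the sense required by Proposition \ref{prop_generalized_RS_movements}) and which satisfies the constraints \eqref{eq_prop_generalized_RS_movements} by construction. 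Applying Proposition \ref{prop_generalized_RS_movements} to $t \mapsto K_t^\beta$ then concludes that $K$ is not a local maximizer of the isotropic constant.

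Since the entire argument reduces to an elementary linear-algebraic count once the preparatory machinery of the section is in hand, I do not foresee a substantial obstacle. The only subtlety worth spelling out is that $\GRS(K,u)$ is genuinely a vector space on which the functionals $a_h$ are linear: closure under addition and scalar multiplication uses that the admissibility interval $[t_1^\beta,t_2^\beta]$ in Definition \ref{def_generalized_RS_movement} is allowed to depend on $\beta$, so after possibly shrinking the interval around $0$, sums and rescalings of admissible speed functions remain admissible.
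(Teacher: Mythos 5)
Your proposal is correct and follows essentially the same argument the paper sketches in the paragraph preceding the corollary: linearity of the functionals $a_h$ on the vector space $\GRS(K,u)$, a dimension count producing a nonzero speed function in the kernel of the bundled constraint map $A$, and then an appeal to Proposition \ref{prop_generalized_RS_movements}. The paper does not write a separate formal proof of Corollary \ref{cor_generalized_RS_movements_dim} --- it ``records the fact as a corollary'' after presenting precisely this dimension argument --- so your write-up simply makes those implicit steps explicit, including the minor point that a nonzero element of $\GRS(K,u)$ is a non-vanishing speed function as required by the proposition.
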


	Again, one might ask about the geometric ``meaning'' of Corollary \ref{cor_generalized_RS_movements_dim}. In Section \ref{sect_the_polytopal_case}, we will analyze the structure of $\GRS(K,u)$ in the case where $K$ is a polytope. As one might suspect, condition \ref{def_generalized_RS_movement_2} in Definition \ref{def_generalized_RS_movement} is somewhat of a nuisance, not least because it involves an orthogonal projection, where the notion of orthogonality depends on the covariance matrix $A_K$. It is therefore natural to ask whether the condition can be dispensed with. In the following section, we prove Theorem \ref{thm_minkowski_summands}, which is analogous to Corollary \ref{cor_generalized_RS_movements_dim}, except that the generalized RS-movements are replaced by linear interpolants of gauge functions, which allows us to get rid of condition \ref{def_generalized_RS_movement_2} in Definition \ref{def_generalized_RS_movement}. A detailed comparison of Theorem \ref{thm_minkowski_summands} and Corollary \ref{cor_generalized_RS_movements_dim} will be given in Section \ref{sect_the_polytopal_case}.
	
	\begin{remark}
		Building on the notion of degrees of freedom of log-concave functions due to Fradelizi-Guédon \cite{fg06}, a decomposability condition similar to \eqref{eq_cor_generalized_RS_movements_dim} is used by Melbourne-Nayar-Roberto \cite{mnr25} to identify the minimizers of entropy in the class of (one-dimensional) log-concave random variables with a given variance (see also \cite{mnt20} for the symmetric case). As shown by Fradelizi and Marín Sola \cite[Prop.~5.2]{fms24}, the strong isotropic constant conjecture (for all dimensions $n \in \N$) is equivalent to a certain conjecture about the minimizers of entropy among isotropic log-concave random vectors.
	\end{remark}

	\section{Minkowski decomposability of the polar body} \label{sect_minkowski_decomposability}
	
	In this section, we prove our main result, Theorem \ref{thm_minkowski_summands}.
	
	The notion of a gauge function of a convex body $K \in \Kno$ was already introduced in Section \ref{sect_introduction}. Closely related to the gauge function is the \emph{radial function} $\rho(K,\cdot)\colon \sphere \rightarrow \R$ of a convex body $K\in \Kno$, which is given by
	\begin{equation}
		\rho(K,u) \coloneqq \max \{\lambda \geq 0 \mid \lambda u \in K\}=\frac{1}{\norm{u}_K}.
	\end{equation}
	For $f \in \Subl(\R^n)$, we define 
	\begin{equation}
		\Lpm(f) \coloneqq \{g \colon \R^n \rightarrow \R \mid \exists \varepsilon >0 \colon f+ \varepsilon g, f- \varepsilon g \in \Subl(\R^n)\}.
	\end{equation}
	We note that every element of $\Lpm(f)$ is positively homogeneous and, as a scaled difference of two sublinear functions, continuous. Because $\Subl(\R^n)$ is convex, the condition $f+ \varepsilon g, f- \varepsilon g \in \Subl(\R^n)$ ensures that the whole line segment $[f+ \varepsilon g, f- \varepsilon g]$ is contained in $\Subl(\R^n)$. Using this fact, it is easy to show that $\Lpm(f)$ is a vector space. In fact, $\Lpm(f)$ is the lineality space of the support cone of $\Subl(\R^n)$ at $f$. As described in the introduction, the dimension of the vector space $\Lpm(\dotnorm_K)$ is the pivotal quantity for our argument; if it is too large, then $K$ cannot be a local maximizer of the isotropic constant. We note that Theorem \ref{thm_minkowski_summands} does not explicitly mention the quantity $\dim\Lpm(\dotnorm_K)$, but rather $\dim \mathcal{S}(K^\circ)$. The following elementary lemma shows that these quantities coincide.
	
	\begin{lemma} \label{lemma_L_is_span_of_S}
		Let $K \in \Kno$. Let $\iota \colon \Kn \rightarrow \Subl(\R^n)$ be the  bijection that maps each element of $\Kn$ to its support function. Then we have $\linh\iota(\mathcal{S}(K^\circ)) = \Lpm(\dotnorm_K)$
		and hence $\dim \mathcal{S}(K^\circ) = \dim \Lpm(\dotnorm_K)$.
	\end{lemma}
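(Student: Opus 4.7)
The plan is to prove the stronger identity $\linh \iota(\mathcal{S}(M)) = \Lpm(h_M)$ for every $M \in \Kn$, and then apply it to $M = K^\circ$; since $\norm{\cdot}_K = h_{K^\circ}$, the lemma follows, and the equality of dimensions is immediate from the equality of vector subspaces.

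For the inclusion $\Lpm(h_M) \subseteq \linh \iota(\mathcal{S}(M))$, I would proceed as follows. Given $g \in \Lpm(h_M)$, pick $\varepsilon > 0$ such that both $h_M + \varepsilon g$ and $h_M - \varepsilon g$ are sublinear; by the standard one-to-one correspondence between convex bodies and their support functions they arise as $h_L$ and $h_N$ for uniquely determined $L, N \in \Kn$. Adding these identities yields $h_L + h_N = 2 h_M$, which by the same correspondence forces $L + N = 2 M$. Hence $\tfrac{1}{2} L$ is a Minkowski summand of $M$, so both $L$ and $N$ lie in $\mathcal{S}(M)$, and writing $g = (2\varepsilon)^{-1}(h_L - h_N)$ exhibits $g$ as an element of $\linh \iota(\mathcal{S}(M))$.

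For the reverse inclusion, since $\Lpm(h_M)$ is already a vector space, it suffices to show $\iota(\mathcal{S}(M)) \subseteq \Lpm(h_M)$. Given $L \in \mathcal{S}(M)$, unfold the definition: there exist $\lambda > 0$, $x \in \R^n$, and $N \in \Kn$ such that $\lambda L + x + N = M$, which on the level of support functions reads $h_M = \lambda h_L + \scpr{x,\cdot} + h_N$. For any $\varepsilon \in (0, \lambda)$, the functions $h_M \pm \varepsilon h_L = (\lambda \pm \varepsilon) h_L + \scpr{x,\cdot} + h_N$ are nonnegative combinations of sublinear functions and hence sublinear, so $h_L \in \Lpm(h_M)$ as required.

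No step presents a serious obstacle; the only minor subtlety is the translation in the definition of \emph{homothetic}, which contributes an additive linear term $\scpr{x,\cdot}$ that is simultaneously sublinear and the negative of a sublinear function, and hence passes harmlessly through the sublinearity checks in both directions.
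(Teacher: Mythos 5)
Your proof is correct and follows essentially the same route as the paper: the first inclusion (reading off two summands $K_\pm$ from $h_{K^\circ} \pm \varepsilon g$) is identical. For the reverse inclusion the paper works directly with an arbitrary linear combination $\sum_i \lambda_i h_{K_i}$ and hand-picks a uniform $\varepsilon = \tfrac{1}{m}\min_i r_i/|\lambda_i|$, whereas you show only that each single $h_L$ with $L \in \mathcal{S}(K^\circ)$ lies in $\Lpm(\norm{\cdot}_K)$ and then invoke the vector-space property of $\Lpm$ (which the paper asserts earlier in Section~\ref{sect_minkowski_decomposability}) to pass to the span. This is a valid and slightly cleaner reorganization; the residual content -- showing $\Lpm$ is closed under sums by halving and averaging $\varepsilon$-witnesses -- is equivalent to the paper's explicit $\varepsilon$-bookkeeping, so no new idea is gained or lost.
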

	
	\begin{proof}
		Let $f \in \Lpm(\dotnorm_K)$. By definition, there exists an $\varepsilon>0$ with $\dotnorm_K+ \varepsilon f, \dotnorm_K- \varepsilon f \in \Subl(\R^n)$. Hence, there exist convex bodies $K_+$, $K_-$ with
		\begin{equation}
			h_{K_+} = \dotnorm_K +\varepsilon f \quad \text{and} \quad h_{K_-} = \dotnorm_K -\varepsilon f.
		\end{equation}
		Since $2 \dotnorm_K=2h_{K^\circ}$, we have $K_+ + K_- = 2K^\circ$ and hence $K_+, K_- \in \mathcal{S}(K^\circ)$. Moreover, we have
		\begin{equation}
			f= \frac{1}{2\varepsilon} \left(h_{K_+}-h_{K_-}\right)
		\end{equation}
		and hence $f \in \linh\iota(\mathcal{S}(K^\circ))$, as claimed.
		
		For the other inclusion, let $K_1, \dots, K_m \in \mathcal{S}(K^\circ)$, $\lambda_1,\dots,\lambda_m \in \R \setminus \{0\}$ and $f \coloneqq \sum_{i=1}^m \lambda_i h_{K_i}$. We have to show that $f \in \Lpm(\dotnorm_K)$. Let $i \in [m]$. Because $K_i \in \mathcal{S}(K^\circ)$, there exist a number $r_i>0$ and a convex body $L_i$ with $K^\circ = r_i K_i + L_i$. It follows that
		\begin{equation}
			\dotnorm_K - r_i h_{K_i} = h_{L_i} \in \Subl(\R^n).
		\end{equation}
		Without loss of generality, we assume that $\lambda_1,\dots,\lambda_{\ell}>0$ and $\lambda_{\ell+1},\dots,\lambda_m<0$ for some index $\ell \in \{0,\dots,m\}$. Let $\varepsilon \coloneqq \tfrac1m \min_{i \in [m]} \frac{r_i}{|\lambda_i|}$. Then we have
		\begin{equation}\label{eq_lemma_span_of_summands_1}
			\dotnorm_K + \varepsilon f = \dotnorm_K + \varepsilon \sum_{i=1}^m \lambda_i h_{K_i} = \frac\ell m \dotnorm_K +\varepsilon\sum_{i=1}^\ell  \lambda_i h_{K_i}+\sum_{i=\ell+1}^m \left(\frac1m \dotnorm_K-\varepsilon |\lambda_i| h_{K_i}\right).
		\end{equation}
		By our choice of $\varepsilon$, we have $\varepsilon |\lambda_i|\leq \frac{r_i}{m}$ and hence
		\begin{equation}
			\frac1m \dotnorm_K-\varepsilon |\lambda_i| h_{K_i} =\frac1m (\dotnorm_K-r_ih_{K_i})+\left(\frac{r_i}m-\varepsilon |\lambda_i|\right)h_{K_i}\in \Subl(\R^n)
		\end{equation}
		for all $i\in \{\ell+1,\dots,m\}$. Since the other summands on the right-hand side of \eqref{eq_lemma_span_of_summands_1} are sublinear as well, it follows that $\dotnorm_K + \varepsilon f \in \Subl(\R^n)$. The claim that $\dotnorm_K - \varepsilon f \in \Subl(\R^n)$ is shown analogously.
		
		The claim about the dimensions of the sets follows from the fact that the cone $\mathcal{S}(K^\circ)$ contains $\{0\}$.
	\end{proof}
	
	To make use of Proposition \ref{prop_L_K_second_derivative}, we need to construct admissible variations that witness the fact that certain convex bodies $K \in \Kno$ cannot be local maximizers of the isotropic constant. This is done in the following construction (see Figure \ref{fig_construction} for an illustration).
	
	\begin{construction} \label{constr_K_t}
		Let $K \in \Kno$ and let $f \in \Lpm(\dotnorm_K)$. Let $\varepsilon >0$ be small enough that
		\begin{enumerate}[label=(\roman*)]
			\item $\dotnorm_K+\varepsilon f$ and $\dotnorm_K-\varepsilon f$ are both sublinear, and
			\item \label{item_constr_K_t_1} we have $\dotnorm_K+\varepsilon f \geq \frac12 \dotnorm_K$ and $\dotnorm_K-\varepsilon f \geq \frac12 \dotnorm_K$.
		\end{enumerate}
		For $t \in [-\varepsilon,\varepsilon]$, let $K_t^f$ be the unit ball of the asymmetric norm given by
		\begin{equation}
			\norm{x}_{K_t^f} \coloneqq \norm{x}_{K} + t \cdot f(x) \quad \text{for } x\in \R^n.
		\end{equation}
		Since condition \ref{item_constr_K_t_1} ensures that $K_t^f$ is compact, $K_t^f$ is indeed a full-dimensional convex body for all $t \in [-\varepsilon,\varepsilon]$.
	\end{construction}
	
	\begin{figure}[ht]
		\begin{subfigure}[b]{.32\linewidth} 
			\centering
			\begin{tikzpicture}%
	[scale=0.3500000,
	back/.style={loosely dotted, thin},
	edge/.style={color=black},
	facet2/.style={fill=cyan,fill opacity=0.200000},
	facet1/.style={fill=orange,fill opacity=0.200000},
	facet/.style={fill=white,fill opacity=1.00000},
	vertex/.style={},
	vertex2/.style={inner sep=1pt,circle,draw=darkgray!25!black,fill=darkgray!75!black,thick},
	vertex3/.style={inner sep=1pt,circle,draw=darkgray!25!black},
	vertex4/.style={inner sep=1pt,circle,draw=blue,fill=blue},
	axis/.style={dotted, thin}]
	
	\coordinate (2.00000, 0.00000) at (2.00000, 0.00000);
	\coordinate (1.00000, -1.50000) at (1.00000, -1.50000);
	\coordinate (-0.50000, -1.50000) at (-0.50000, -1.50000);
	\coordinate (1.00000, 1.50000) at (1.00000, 1.50000);
	\coordinate (-2.00000, 0.00000) at (-2.00000, 0.00000);
	\coordinate (-1.00000, 2.00000) at (-1.00000, 2.00000);
	\fill[facet1]  (-5.45455, -3.27273) -- (0.00000, -5.68421) -- (4.50000, -3.00000) -- (4.88889, 3.55556) -- (-0.88889, 5.33333) -- (-4.94118, 0.82353) -- cycle {};
	\fill[facet2] (4.40000, 3.20000) -- (-1.00000, 6.00000) -- (-6.00000, 1.00000) -- (-5.00000, -3.00000) -- (0.00000, -6.00000) -- (4.50000, -3.00000) -- cycle {};
	\fill[facet] (-5.28000, -1.88000) -- (-5.00000, -3.00000) -- (-2.00000, -4.80000) -- (0.00000, -5.68421) -- (4.50000, -3.00000) -- (4.40000, 3.20000) -- (2.00000, 4.44444) -- (-0.88889, 5.33333) -- (-4.94118, 0.82353) -- cycle {};
	
	\draw[edge] (-6.00000, 1.00000) -- (-5.00000, -3.00000);
	\draw[edge] (-6.00000, 1.00000) -- (-1.00000, 6.00000);
	\draw[edge] (-5.00000, -3.00000) -- (0.00000, -6.00000);
	\draw[edge] (-1.00000, 6.00000) -- (4.40000, 3.20000);
	\draw[edge] (0.00000, -6.00000) -- (4.50000, -3.00000);
	\draw[edge] (4.50000, -3.00000) -- (4.40000, 3.20000);
	
	\draw[edge] (4.88889, 3.55556) -- (4.50000, -3.00000);
	\draw[edge] (4.88889, 3.55556) -- (-0.88889, 5.33333);
	\draw[edge] (4.50000, -3.00000) -- (0.00000, -5.68421);
	\draw[edge] (0.00000, -5.68421) -- (-5.45455, -3.27273);
	\draw[edge] (-0.88889, 5.33333) -- (-4.94118, 0.82353);
	\draw[edge] (-4.94118, 0.82353) -- (-5.45455, -3.27273);
	\draw[axis] (0, 0) -- (-6.00000, 1.00000);
	\draw[axis] (0, 0) -- (-6.00000, -3.60000);
	\draw[axis] (0, 0) -- (-1.00000, 6.00000);
	\draw[axis] (0, 0) -- (0.00000, -5.40000);
	\draw[axis] (0, 0) -- (4.50000, -3.00000);
	\draw[axis] (0, 0) -- (5.50000, 4.00000);
	
	\node[vertex3] at (-4.20000, 0.70000)     {};
	\node[vertex3] at (-6.00000, -3.60000)     {};
	\node[vertex3] at (-0.80000, 4.80000)     {};
	\node[vertex3] at (0.00000, -5.40000)     {};
	\node[vertex3] at (4.50000, -3.00000)     {};
	\node[vertex2] at (4.40000, 3.20000)     {};
	
	\node[vertex2] at (-6.00000, 1.00000)     {};
	\node[vertex2] at (-5.00000, -3.00000)     {};
	\node[vertex2] at (-1.00000, 6.00000)     {};
	\node[vertex2] at (0.00000, -6.00000)     {};
	\node[vertex2] at (4.50000, -3.00000)     {};
	\node[vertex3] at (5.50000, 4.00000)     {};

	\node[vertex4] at (0, 0)     {};

\end{tikzpicture}
			
			\caption{start of movement $K_{-\varepsilon}$}
		\end{subfigure}
		\begin{subfigure}[b]{.33\linewidth}
			\centering
			\begin{tikzpicture}%
	[scale=0.3500000,
	back/.style={loosely dotted, thin},
	edge/.style={color=black},
	facet2/.style={fill=cyan,fill opacity=0.200000},
	facet1/.style={fill=orange,fill opacity=0.200000},
	facet/.style={fill=white,fill opacity=1.00000},
	vertex/.style={},
	vertex2/.style={inner sep=1pt,circle,draw=darkgray!25!black,fill=darkgray!75!black,thick},
	vertex3/.style={inner sep=1pt,circle,draw=darkgray!25!black},
	vertex4/.style={inner sep=1pt,circle,draw=blue,fill=blue},
	axis/.style={dotted, thin}]
	
	\coordinate (2.00000, 0.00000) at (2.00000, 0.00000);
	\coordinate (1.00000, -1.50000) at (1.00000, -1.50000);
	\coordinate (-0.50000, -1.50000) at (-0.50000, -1.50000);
	\coordinate (1.00000, 1.50000) at (1.00000, 1.50000);
	\coordinate (-2.00000, 0.00000) at (-2.00000, 0.00000);
	\coordinate (-1.00000, 2.00000) at (-1.00000, 2.00000);
	
	\draw[edge] (4.88889, 3.55556) -- (4.50000, -3.00000);
	\draw[edge] (4.88889, 3.55556) -- (-0.88889, 5.33333);
	\draw[edge] (4.50000, -3.00000) -- (0.00000, -5.68421);
	\draw[edge] (0.00000, -5.68421) -- (-5.45455, -3.27273);
	\draw[edge] (-0.88889, 5.33333) -- (-4.94118, 0.82353);
	\draw[edge] (-4.94118, 0.82353) -- (-5.45455, -3.27273);
	\node[vertex3] at (-4.20000, 0.70000)     {};
	\node[vertex3] at (-6.00000, -3.60000)     {};
	\node[vertex3] at (-0.80000, 4.80000)     {};
	\node[vertex3] at (0.00000, -5.40000)     {};
	\node[vertex3] at (4.50000, -3.00000)     {};
	\node[vertex3] at (4.40000, 3.20000)     {};
	
	\node[vertex3] at (-6.00000, 1.00000)     {};
	\node[vertex3] at (-5.00000, -3.00000)     {};
	\node[vertex3] at (-1.00000, 6.00000)     {};
	\node[vertex3] at (0.00000, -6.00000)     {};
	\node[vertex3] at (4.50000, -3.00000)     {};
	\node[vertex3] at (5.50000, 4.00000)     {};
	
	\node[vertex2] at (4.88889, 3.55556)     {};
	\node[vertex2] at (4.50000, -3.00000)     {};
	\node[vertex2] at (0.00000, -5.68421)     {};
	\node[vertex2] at (-0.88889, 5.33333)     {};
	\node[vertex2] at (-4.94118, 0.82353)     {};
	\node[vertex2] at (-5.45455, -3.27273)     {};
	
	\node[vertex4] at (0, 0)     {};
	
	\draw[axis] (0, 0) -- (-6.00000, 1.00000);
	\draw[axis] (0, 0) -- (-6.00000, -3.60000);
	\draw[axis] (0, 0) -- (-1.00000, 6.00000);
	\draw[axis] (0, 0) -- (0.00000, -5.40000);
	\draw[axis] (0, 0) -- (4.50000, -3.00000);
	\draw[axis] (0, 0) -- (5.50000, 4.00000);

\end{tikzpicture}
			
			\caption{polytope $K=K_0$}
		\end{subfigure}
		\begin{subfigure}[b]{.32\linewidth}
			\centering
			\begin{tikzpicture}%
	[scale=0.350000,
	back/.style={loosely dotted, thin},
	edge/.style={color=black},
	facet2/.style={fill=cyan,fill opacity=0.200000},
	facet1/.style={fill=orange,fill opacity=0.200000},
	facet/.style={fill=white,fill opacity=1.00000},
	vertex/.style={},
	vertex2/.style={inner sep=1pt,circle,draw=darkgray!25!black,fill=darkgray!75!black,thick},
	vertex3/.style={inner sep=1pt,circle,draw=darkgray!25!black},
	vertex4/.style={inner sep=1pt,circle,draw=blue,fill=blue},
	axis/.style={dotted, thin}]
	
	\coordinate (2.00000, 0.00000) at (2.00000, 0.00000);
	\coordinate (1.00000, -1.50000) at (1.00000, -1.50000);
	\coordinate (-0.50000, -1.50000) at (-0.50000, -1.50000);
	\coordinate (1.00000, 1.50000) at (1.00000, 1.50000);
	\coordinate (-2.00000, 0.00000) at (-2.00000, 0.00000);
	\coordinate (-1.00000, 2.00000) at (-1.00000, 2.00000);
	\fill[facet1] (-5.45455, -3.27273) -- (0.00000, -5.68421) -- (4.50000, -3.00000) -- (4.88889, 3.55556) -- (-0.88889, 5.33333) -- (-4.94118, 0.82353) -- cycle {};
	\fill[facet2] (5.50000, 4.00000) -- (-0.80000, 4.80000) -- (-4.20000, 0.70000) -- (-6.00000, -3.60000) -- (0.00000, -5.40000) -- (4.50000, -3.00000) -- cycle {};
	\fill[facet] (-5.45455, -3.27273) -- (-2.00000, -4.80000) -- (0.00000, -5.40000) -- (4.50000, -3.00000) -- (4.88889, 3.55556) -- (2.00000, 4.44444) -- (-0.80000, 4.80000) -- (-4.20000, 0.70000) -- (-5.28000, -1.88000) -- cycle {};
	
	\draw[edge] (-4.20000, 0.70000) -- (-6.00000, -3.60000);
	\draw[edge] (-4.20000, 0.70000) -- (-0.80000, 4.80000);
	\draw[edge] (-6.00000, -3.60000) -- (0.00000, -5.40000);
	\draw[edge] (-0.80000, 4.80000) -- (5.50000, 4.00000);
	\draw[edge] (0.00000, -5.40000) -- (4.50000, -3.00000);
	\draw[edge] (4.50000, -3.00000) -- (5.50000, 4.00000);
	
	\draw[edge] (4.88889, 3.55556) -- (4.50000, -3.00000);
	\draw[edge] (4.88889, 3.55556) -- (-0.88889, 5.33333);
	\draw[edge] (4.50000, -3.00000) -- (0.00000, -5.68421);
	\draw[edge] (0.00000, -5.68421) -- (-5.45455, -3.27273);
	\draw[edge] (-0.88889, 5.33333) -- (-4.94118, 0.82353);
	\draw[edge] (-4.94118, 0.82353) -- (-5.45455, -3.27273);
	\draw[axis] (0, 0) -- (-6.00000, 1.00000);
	\draw[axis] (0, 0) -- (-6.00000, -3.60000);
	\draw[axis] (0, 0) -- (-1.00000, 6.00000);
	\draw[axis] (0, 0) -- (0.00000, -5.40000);
	\draw[axis] (0, 0) -- (4.50000, -3.00000);
	\draw[axis] (0, 0) -- (5.50000, 4.00000);
	
	\node[vertex2] at (-4.20000, 0.70000)     {};
	\node[vertex2] at (-6.00000, -3.60000)     {};
	\node[vertex2] at (-0.80000, 4.80000)     {};
	\node[vertex2] at (0.00000, -5.40000)     {};
	\node[vertex2] at (4.50000, -3.00000)     {};
	\node[vertex3] at (4.40000, 3.20000)     {};
	
	\node[vertex3] at (-6.00000, 1.00000)     {};
	\node[vertex3] at (-5.00000, -3.00000)     {};
	\node[vertex3] at (-1.00000, 6.00000)     {};
	\node[vertex3] at (0.00000, -6.00000)     {};
	\node[vertex3] at (4.50000, -3.00000)     {};
	\node[vertex2] at (5.50000, 4.00000)     {};

	\node[vertex4] at (0, 0)     {};
	
\end{tikzpicture}
			
			\caption{end of movement $K_\varepsilon$}
		\end{subfigure}
		\caption{\label{fig_construction}An illustration of Construction \ref{constr_K_t} in the case where $K$ is a polygon. In Subfigure (A) and Subfigure (C), the sets $K_{-\varepsilon} \setminus K$ and $K_{\varepsilon} \setminus K$ are shaded in cyan, whereas the sets $K \setminus K_{-\varepsilon}$ and $K \setminus K_{\varepsilon}$ are shaded in orange. In this example, as in the polytopal case more generally, the vertices move on rays emanating from the origin (marked in blue); their distance to the origin at time $t$ is the weighted harmonic mean of the initial and the final distance.}
	\end{figure}
	
	We first verify that we have indeed constructed an admissible variation and compute the corresponding derivatives.
	
	\begin{lemma} \label{lemma_K_t_f_is_an_admissible_variation}
		Let $K \in \Kno$ and $f \in \Lpm(\dotnorm_K)$. Let $\varepsilon>0$ and $K_t \coloneqq K_t^f$ be as in Construction \ref{constr_K_t}. 
		Then $[-\varepsilon, \varepsilon] \rightarrow \Kno, t \mapsto K_t$ is an admissible variation of $K$ with 
		\begin{equation}
			\left.\ddt \int_{K_t} g(x) \dint x\right|_{t=0} = \int_{\sphere} -g(\rho(K,u)u) \cdot f(\rho(K,u)u) \cdot \rho(K,u)^{n} \dint u \quad \text{for } g \in C(\R^n).
		\end{equation}
		Moreover, if $f \neq 0$, we have
		\begin{equation} %
			\left.\ddtn{2}\int_{K_t} [\norm{x}_2^2-(n+2)] \dint x \right|_{t=0}>(n+3)\int_{\sphere}   f(\rho(K,u)u)^2 \left[ \rho(K,u)^{2} -(n+2)\right] \rho(K,u)^{n} \dint u.
		\end{equation}
		Here and in the following, integrals over $\sphere$ are with respect to the $(n-1)$-dimensional Hausdorff measure.
		
	\end{lemma}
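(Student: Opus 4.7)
The plan is to write all the relevant integrals in polar coordinates based on the radial function $\rho(K_t,u) = 1/\norm{u}_{K_t} = 1/(\norm{u}_K + t f(u))$ for $u \in \sphere$. By condition (ii) of Construction \ref{constr_K_t}, $\norm{u}_K + t f(u) \geq \tfrac12 \norm{u}_K > 0$ holds uniformly in $(t,u) \in [-\varepsilon,\varepsilon] \times \sphere$, so $t \mapsto \rho(K_t,u)$ is smooth with $\ddt \rho(K_t,u) = -f(u)\rho(K_t,u)^2$ and all derivatives are uniformly bounded in $u$. For $g\in C(\R^n)$, writing $\int_{K_t} g(x) \dint x = \int_\sphere \int_0^{\rho(K_t,u)} g(ru) r^{n-1} \dint r \dint u$ and differentiating under the integral sign by the Leibniz rule gives the first-order formula. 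Matching the explicit factor $-f(u)\rho(K,u)^{n+1}$ with the claimed expression is just the identity $f(\rho(K,u) u) = \rho(K,u) f(u)$, which holds since $f$ is positively homogeneous as a difference of two sublinear functions.

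The admissibility of $t \mapsto K_t$ then follows from two observations. First, the linear functional $g \mapsto \int_\sphere - g(\rho(K,u)u) f(\rho(K,u) u)\rho(K,u)^n \dint u$ is bounded by $\norm{g}_\infty \cdot \int_\sphere |f(u)| \rho(K,u)^{n+1} \dint u$, a finite constant since $f$ and $\rho(K,\cdot)$ are bounded on $\sphere$. Hence it extends to a continuous linear functional on $C(\R^n)$, which is the weak-derivative condition of Definition \ref{def_weak_derivative}. Second, in polar coordinates one has $\vol(K_t) = \int_\sphere \rho(K_t,u)^n/n \dint u$ and $\int_{K_t}\norm{x}^2 \dint x = \int_\sphere \rho(K_t,u)^{n+2}/(n+2) \dint u$; both are twice differentiable in $t$ by a dominated convergence argument applied to the smooth integrands $1/(\norm{u}_K + tf(u))^k$.

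For the second-derivative identity and the strict inequality, the key computation is the iteration $\ddt \rho(K_t,u)^k = -k f(u) \rho(K_t,u)^{k+1}$, which upon differentiating once more yields $\ddtn{2}\rho(K_t,u)^k = k(k+1)f(u)^2 \rho(K_t,u)^{k+2}$. Applying this to the polar-coordinate expressions for $\int_{K_t} \norm{x}^2 \dint x$ and $\vol(K_t)$ with $k = n+2$ and $k=n$ respectively gives, at $t=0$,
\begin{equation*}
\left.\ddtn{2}\int_{K_t}[\norm{x}^2 -(n+2)] \dint x\right|_{t=0} = \int_\sphere f(u)^2 \rho(K,u)^{n+2}\bigl[(n+3)\rho(K,u)^2 - (n+1)(n+2)\bigr] \dint u.
\end{equation*}
Using homogeneity to substitute $f(u)^2 \rho(K,u)^{n+2} = f(\rho(K,u)u)^2 \rho(K,u)^n$, this differs from the proposed lower bound $(n+3)\int_\sphere f(\rho(K,u)u)^2[\rho(K,u)^2 - (n+2)]\rho(K,u)^n \dint u$ by exactly the positive quantity
\begin{equation*}
2(n+2) \int_\sphere f(\rho(K,u)u)^2 \rho(K,u)^n \dint u.
\end{equation*}
This quantity is strictly positive whenever $f \not\equiv 0$: since $f$ is continuous and positively homogeneous, nonvanishing at a single point forces $f$ to be nonzero on an open subset of $\sphere$. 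The main obstacle is purely bookkeeping — keeping the various powers of $\rho(K,\cdot)$ consistent with the homogeneity of $f$ so that the claimed formulas emerge in precisely the stated form; the analytic content is the Leibniz/dominated-convergence step, which is routine thanks to the uniform positivity of $\norm{\cdot}_K + tf$ on the support.
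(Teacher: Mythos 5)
Your proposal is correct and follows essentially the same route as the paper: write the volume integrals in polar coordinates, differentiate the powers of the radial function $\rho(K_t,u)=1/(\norm{u}_K+tf(u))$, and derive the strict inequality from the excess of $(n+3)$ over the coefficient appearing in $\ddtn{2}\vol(K_t)$. The only cosmetic difference is that you track $f(u)$ rather than $f(\rho(K,u)u)$ through the intermediate steps (the two are equivalent by degree-one homogeneity of $f$), and you make the positive defect term $2(n+2)\int_{\sphere} f(\rho(K,u)u)^2\rho(K,u)^n\dint u$ explicit, whereas the paper simply replaces the coefficient $(n+1)$ in $\ddtn{2}\vol(K_t)$ by the strictly larger $(n+3)$; these are the same inequality stated two different ways.
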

	
	\begin{proof}
		For the sake of simplicity, we write
		\begin{equation}
			\rho_u(t) \coloneqq \rho(K_t,u) \quad \text{for } t \in [-\varepsilon,\varepsilon].
		\end{equation} 
		By construction, we have
		\begin{equation} \label{eq_rho_first_derivative}
			\rho'_u(0)= \left.\ddt \frac{1}{\norm{u}_{K_t}}\right|_{t=0}=\frac{-1}{\norm{u}_{K_0}^2} \cdot f(u)=-\rho_u(0)\cdot f(\rho_u(0)u)
		\end{equation}
		and
		\begin{equation} \label{eq_rho_second_derivative}
			\rho''_u(0)= \left.\ddt \frac{-f(u)}{\norm{u}_{K_t}^2}\right|_{t=0}=\frac{2}{\norm{u}_{K_0}^3} \cdot f(u)^2=2\rho_u(0)\cdot f(\rho_u(0)u)^2
		\end{equation}
		for all $u \in \sphere$.
		
		Let $g \in C(\R^n)$. By the Leibniz integral rule \cite[Thm.~2.27]{fol99}, we have
		\begin{align*}
			\left.\ddt \int_{K_t} g(x) \dint x\right|_{t=0} &= \left.\ddt \int_{\sphere} \int_0^{\rho_u(t)} g(ru) \cdot r^{n-1} \dint r \dint u \right|_{t=0} \\&=  \int_{\sphere}\left.\ddt \int_0^{\rho_u(t)} g(ru) \cdot r^{n-1} \dint r \right|_{t=0} \dint u \\
			 &=  \int_{\sphere}\rho_u'(0) \cdot g(\rho_u(0)u) \cdot \rho_u(0)^{n-1} \dint u\\&=  \int_{\sphere} -g(\rho_u(0)u) \cdot f(\rho_u(0)u) \cdot \rho_u(0)^{n} \dint u.
		\end{align*}
		Turning to the second derivatives, we apply the Leibniz integral rule twice and use \eqref{eq_rho_first_derivative} and \eqref{eq_rho_second_derivative} to obtain
		\begin{align}
			\left.\ddtn{2}\int_{K_t} \norm{x}_2^2\dint x \right|_{t=0} &= \left.\ddtn{2} \int_{\sphere}\int_0^{\rho_u(t)} r^2 \cdot r^{n-1} \dint r \dint u \right|_{t=0} \\
			&= \int_{\sphere} \left.\ddtn{2} \frac{\rho_u(t)^{n+2}}{n+2}\right|_{t=0} \dint u  \\&=  \int_{\sphere} \left((n+1)\cdot \rho_u(0)^{n} \cdot\rho'_u(0)^2 +\rho_u(0)^{n+1} \cdot\rho''_u(0)\right) \dint u\\
			&=(n+3)\int_{\sphere} f(\rho_u(0)u)^2 \cdot \rho_u(0)^{n+2}\dint u
		\end{align}
		and
		\begin{align}
			\left.\ddtn{2} \vol(K_t) \right|_{t=0} &= \left.\ddtn{2} \int_{\sphere}\int_0^{\rho_u(t)} r^{n-1} \dint r \dint u \right|_{t=0} \\
			&= \int_{\sphere} \left.\ddtn{2} \frac{\rho_u(t)^{n}}{n}\right|_{t=0} \dint u  \\&=  \int_{\sphere} \left((n-1)\cdot \rho_u(t)^{n-2} \cdot\rho'_u(t)^2 +\rho_u(t)^{n-1}\cdot \rho''_u(t)\right) \dint u\\
			&=(n+1)\int_{\sphere} f(\rho_u(0)u)^2 \cdot\rho_u(t)^{n} \dint u.
		\end{align}
		This shows that $t \mapsto K_t$ is indeed an admissible variation.
		
		With regard to the second claim, we have $f(\rho_u(0)u)^2 \cdot \rho_u(0)^{n}\geq 0$ for all $u \in \sphere$. Now let $f \neq 0$. Because $f$ is continuous, we even have $f(\rho_u(0)u)^2 \cdot \rho_u(0)^{n}> 0$ for all $u$ in some open subset of $\sphere$. This implies
		\begin{equation}
			\left.\ddtn{2} \vol(K_t) \right|_{t=0} <(n+3)\int_{\sphere} f(\rho_u(0)u)^2 \cdot\rho_u(t)^{n} \dint u.
		\end{equation}
		It follows that
		\begin{align} \label{eq_lemma_K_t_f_is_an_admissible_variation}
			&\left.\ddtn{2}\int_{K_t} [\norm{x}_2^2-(n+2)] \dint x \right|_{t=0}= 	\left.\ddtn{2}\int_{K_t} \norm{x}_2^2\dint x \right|_{t=0}-(n+2) \left.\ddtn{2} \vol(K_t) \right|_{t=0}\\&\quad\quad\quad>(n+3)\int_{\sphere}   f(\rho_u(0)u)^2 \left[ \rho_u(0)^{2} -(n+2)\right] \rho_u(0)^{n} \dint u. \qedhere
		\end{align}
	\end{proof}

	In order to prove Theorem \ref{thm_minkowski_summands}, we would like to show that the integral on the right-hand side of \eqref{eq_lemma_K_t_f_is_an_admissible_variation} is positive. For this, we assume towards a contradiction that $K$ is a local maximizer of the isotropic constant. If the integral on the right-hand side of \eqref{eq_lemma_K_t_f_is_an_admissible_variation} can be ``realized'' as the weak derivative of a suitable one-parameter family of convex bodies, then the sign of the expression under consideration can be determined by the first-order condition \eqref{eq_cor_first_order_condition_one_sided}. The following lemma shows that it is indeed possible to realize the integral on the right-hand side of \eqref{eq_lemma_K_t_f_is_an_admissible_variation} as a weak derivative.
	
	\begin{lemma} \label{lemma_f_squared_construction}
		Let $K \in \Kno$ and let $f \colon \R^n \rightarrow \R$ be a positively homogeneous function such that $\dotnorm_K+f$ and $\dotnorm_K-f$ are both convex. Then there exists an $\varepsilon>0$ such that
		\begin{equation}
			g_t \colon K \rightarrow \R, \quad g_t(x) \coloneqq \dotnorm_K+t f^2
		\end{equation}
		is convex for all $t \in [0,\varepsilon]$. For $t \in [0,\varepsilon]$, let $\tilde{K}_t$ be given by
		\begin{equation}
			\tilde{K}_t \coloneqq \{x \in K\mid g_t(x) \leq 1\}.
		\end{equation}
		Then $(\tilde{K}_t)_{t \in [0,\varepsilon]}$ is weakly differentiable at $t=0$ with
		\begin{equation} %
			\left.\ddt \int_{\tilde{K}_t} h(x) \dint x \right|_{t=0} =- \int_{\sphere}   f(\rho(K,u)u)^2 \cdot h(\rho(K,u) u) \cdot \rho(K,u)^{n} \dint u
		\end{equation}
		for all $h \in C(\R^n)$.
	\end{lemma}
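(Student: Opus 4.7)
My plan is to proceed in three stages: first verify that $g_t$ is convex on $K + B_n$ for sufficiently small $t \geq 0$, so that $\tilde K_t$ is a convex body; then parameterize the boundary of $\tilde K_t$ radially; and finally differentiate the resulting integral via the Leibniz rule, much as in the proof of Lemma \ref{lemma_K_t_f_is_an_admissible_variation}.

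For the convexity step, I would exploit that $f = (\norm{\cdot}_K + f) - \norm{\cdot}_K$ is a difference of two sublinear functions, hence a DC function on $\R^n$ that is locally Lipschitz and, by Alexandrov's theorem, twice differentiable almost everywhere. Where $f$ is twice differentiable, the Hessian of $g_t$ decomposes as
\[
\nabla^2 g_t = \nabla^2\norm{\cdot}_K + 2t\bigl(\nabla f\otimes \nabla f + f\,\nabla^2 f\bigr),
\]
in which the first two summands are positive semidefinite. On the compact set $K + B_n$ the remaining term $2tf\,\nabla^2 f$ is bounded in operator norm in terms of the DC decomposition of $f$, so that for $\varepsilon > 0$ small enough $\nabla^2 g_t \succeq 0$ throughout $[0,\varepsilon]$; convexity of $g_t$ then follows (with an approximation argument to accommodate the non-smoothness of $\norm{\cdot}_K$ and $f$), and the fact that $g_t(0) = 0 < 1$ shows that the sublevel set $\tilde K_t$ contains the origin in its interior.

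Steps two and three rely on the simplification afforded by $1$-homogeneity of $\norm{\cdot}_K$ and $f$: for $u \in \sphere$ and $r > 0$,
\[
g_t(ru) = r\,\norm{u}_K + t r^2 f(u)^2.
\]
This is strictly increasing in $r$ on $(0,\infty)$, so the equation $g_t(ru) = 1$ has a unique positive solution $\tilde\rho_u(t)$, satisfying $\tilde\rho_u(0) = \rho(K,u)$. Implicit differentiation at $t = 0$ yields
\[
\tilde\rho_u'(0) \;=\; -\frac{\rho(K,u)^2 f(u)^2}{\norm{u}_K} \;=\; -\rho(K,u)\cdot f(\rho(K,u)u)^2,
\]
using $f(\rho u)^2 = \rho^2 f(u)^2$ and $\norm{u}_K = 1/\rho(K,u)$. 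Writing the integral in spherical coordinates as $\int_{\tilde K_t}h(x)\dint x = \int_{\sphere}\int_0^{\tilde\rho_u(t)} h(ru)\,r^{n-1}\dint r\dint u$ and invoking the Leibniz rule at $t=0$ with $h(x) = \norm{x}^2 - (n+2)$ gives
\[
\int_{\sphere}[\rho(K,u)^2 - (n+2)]\,\rho(K,u)^{n-1}\,\tilde\rho_u'(0)\dint u,
\]
which after substituting the expression for $\tilde\rho_u'(0)$ is exactly the claimed identity.

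I expect the main obstacle to be the convexity verification: neither $\norm{\cdot}_K$ nor $f$ is assumed smooth, and $\norm{\cdot}_K$ has no strict convexity in the radial direction, so the perturbation $tf^2$ can point in "bad" directions that need to be compensated by careful analysis of the second-order behavior of $\norm{\cdot}_K$ in tangential directions on $K + B_n$. The radial computation, by contrast, is a straightforward adaptation of the one performed in Lemma \ref{lemma_K_t_f_is_an_admissible_variation}.
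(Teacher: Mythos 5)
Your derivative computation --- radial parameterization of $\bd\tilde K_t$, implicit differentiation of $g_t(ru)=1$ at $t=0$, and the Leibniz rule in polar coordinates --- coincides exactly with the paper's, and the algebra checks out. The issue is the convexity step, which you correctly singled out as the main obstacle; unfortunately, the Hessian route you outline does not close the gap.

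There are two concrete problems with the Alexandrov approach. First, the a.e.\ Alexandrov Hessian of a nonsmooth DC function being positive semidefinite is \emph{not} sufficient for convexity: the singular part of the distributional second derivative, which Alexandrov's theorem does not see, is precisely where the curvature of a gauge function $\norm{\cdot}_K$ and of $f$ actually resides. For a polytope $K$, the a.e.\ Hessian of $\norm{\cdot}_K$ vanishes identically, yet $\norm{\cdot}_K$ is convex only because of a singular part supported on lower-dimensional cones; whether $\norm{\cdot}_K+tf^2$ remains convex hinges on a comparison of singular parts that no pointwise Hessian bound can detect. Second, your claim that $f\,\nabla^2 f$ has bounded operator norm on $K+B_n$ fails: $K\in\Kno$ forces $0\in\interior(K+B_n)$, and a positively homogeneous function of degree one is not differentiable at the origin, let alone with a locally bounded Hessian there. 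The ``approximation argument'' you invoke would have to address both of these issues and is not specified.

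The paper avoids all of this with an elementary pointwise estimate that requires no regularity whatsoever. It sets $\varepsilon=\bigl(1+2\max_{x\in K+B_n}|f(x)|\bigr)^{-1}$ and, for $z=\lambda x+(1-\lambda)y$ with $x,y\in K+B_n$, starts from the trivial inequality $\lambda(f(x)-f(z))^2+(1-\lambda)(f(y)-f(z))^2\geq 0$, which expands to
\[
f(z)^2-\lambda f(x)^2-(1-\lambda)f(y)^2 \;\leq\; 2f(z)\bigl[f(z)-\lambda f(x)-(1-\lambda)f(y)\bigr].
\]
Multiplying by $t\in[0,\varepsilon]$ and using the convexity of $\norm{\cdot}_K+f$, the right-hand side is controlled by the convexity defect $\lambda\norm{x}_K+(1-\lambda)\norm{y}_K-\norm{z}_K$ of the gauge, and rearranging yields $g_t(z)\leq\lambda g_t(x)+(1-\lambda)g_t(y)$ directly. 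No smoothness, no Hessians, no approximation.
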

	\begin{proof}
		We claim that $\varepsilon\coloneqq \left(1+2\max_{x \in K}|f(x)|\right)^{-1}$ has the stated property. Let $x,y \in K$ and let $z$ be given by $z=\lambda x+ (1-\lambda)y$ for $\lambda \in [0,1]$. If 
		\begin{equation}
			f(z)^2\leq \lambda f(x)^2 +(1-\lambda)f(y)^2,
		\end{equation}
		then the convexity of $\dotnorm_K$ implies
		\begin{equation}
			\norm{z}_K+t f(z)^2 \leq \lambda [\norm{x}_K+t f(x)^2]+ (1-\lambda) [\norm{y}_K+t f(y)^2] \quad \text{for } t \geq 0. %
		\end{equation}
		It remains to consider the case where 
		\begin{equation}\label{eq_f_squared_construction_interesting_case}
			f(z)^2> \lambda f(x)^2 +(1-\lambda)f(y)^2.
		\end{equation}
		Clearly, we have
		\begin{equation}
			(f(z)-f(z))^2-\lambda(f(x)-f(z))^2-(1-\lambda)(f(y)-f(z))^2\leq 0
		\end{equation}
		and hence, using \eqref{eq_f_squared_construction_interesting_case},
		\begin{align*}
			0<f(z)^2-\lambda f(x)^2 -(1-\lambda)f(y)^2 &\leq 2f(z) \cdot \left(f(z)-\lambda f(x)-(1-\lambda) f(y)\right)\\
			&= 2|f(z)| \cdot \sgn(f(z)) \cdot \left(f(z)-\lambda f(x)-(1-\lambda) f(y)\right).
		\end{align*}
		Therefore, for every $t \in [0,\varepsilon]$, we have
		\begin{align}
			t \cdot \left[f(z)^2-\lambda f(x)^2 -(1-\lambda)f(y)^2\right] &\leq \sgn(f(z)) \cdot (f(z)-\lambda f(x)-(1-\lambda) f(y)) \\&\leq  \lambda \norm{x}_K+(1-\lambda) \norm{y}_K-\norm{z}_K,
		\end{align}
		where the second step follows from the convexity of $\dotnorm_K+f$ and $\dotnorm_K-f$. Rearranging terms, we obtain the desired estimate
		\begin{equation}
			\norm{z}_K+t f(z)^2 \leq \lambda [\norm{x}_K+t f(x)^2]+ (1-\lambda) [\norm{y}_K+t f(y)^2] \quad \text{for } t \in [0,\varepsilon]. %
		\end{equation}
		
		With regard to the second claim, we observe that $\rho_u(t)\coloneqq \rho(\tilde{K}_t,u)$ satisfies the equation
		\begin{equation}
			1=\norm{\rho_u(t)u}_{\tilde{K_t}}= g_t(\rho_ut)=\norm{\rho_u(t) u}_K+t f(\rho_u(t) u)^2 = \rho_u(t) \cdot \norm{u}_K+t \cdot \rho_u(t)^2 \cdot f(u)^2 
		\end{equation}
		for all $u \in \sphere$ and $t \in [0,\varepsilon]$. Differentiating both sides yields
		\begin{equation}
			\rho'_u(t)\cdot\norm{ u}_K+t\cdot  \rho'_u(t) \cdot 2 \rho_u(t) \cdot f( u)^2 +f(\rho_u(t) u)^2 =0
		\end{equation}
		and we get
		\begin{equation}
			\rho'_u(0)= -\frac{f(\rho_u(0) u)^2}{\norm{ u}_K} = - \rho_u(0)\cdot f(\rho_u(0) u)^2.
		\end{equation}
		Let $h \in C(\R^n)$. By the Leibniz integral rule, we have
		\begin{align}
			\left.\ddt \int_{\tilde{K}_t}h(x) \dint x \right|_{t=0}  &= \int_{\sphere}  \rho'_u(0) \cdot h(\rho_u(0)u)\cdot \rho_u(0)^{n-1} 
			\!\dint u \\
			&= -\int_{\sphere}   f(\rho_u(0)u)^2 \cdot h(\rho_u(0)u) \cdot \rho_u(0)^{n} \dint u. \qedhere
		\end{align}
	\end{proof}
	For the proof of Theorem \ref{thm_minkowski_summands}, we now merely have to combine the previous results.
	\begin{proof}[Proof of Theorem \ref{thm_minkowski_summands}]
		Towards a contradiction, we assume that there exists a local maximizer $K \in \Kno$ of the isotropic constant with $\dim \mathcal{S}(K^\circ)> \frac{n^2+3n}{2}$. Because $\dim\mathcal{S}(K^\circ)$ is an affine invariant, we can assume without loss of generality that $K$ is isotropic. For each $f \in \Lpm(\dotnorm_K)$, let $\varepsilon_f>0$ and $K^f_t \colon [-\varepsilon_f,\varepsilon_f] \rightarrow \Knn$ be as in Construction \ref{constr_K_t}. %
		By Lemma \ref{lemma_K_t_f_is_an_admissible_variation}, the map
		\begin{equation}
			A \colon \Lpm(\dotnorm_K) \mapsto \Symn \times \R^n, \quad A(f) \coloneqq \left(\left[\left.\ddt \int_{K^f_t} x_i x_j \dint x\right|_{t=0}\right]_{i,j=1}^n,\left[\left.\ddt \int_{K^f_t} x_i \dint x\right|_{t=0}\right]_{i=1}^n\right)
		\end{equation}
		is linear, and from the fact that
		\begin{equation}
			\dim(\Symn \times \R^n) = \frac{n^2+n}{2}+n= \frac{n^2+3n}{2}
		\end{equation}
		it follows that $\ker A \neq \{0\}$. In other words, there exists a function $f \in \Lpm(\dotnorm_K) \setminus \{0\}$ such that $t \mapsto K_t \coloneqq K_t^f$ satisfies the assumptions of Proposition \ref{prop_L_K_second_derivative}. Possibly rescaling $f$, we can assume that $\varepsilon_f=1$. Using Lemma \ref{lemma_K_t_f_is_an_admissible_variation}, we get
		\begin{align} \label{eq_proof_of_thm_minkowski_summands}
			\left.\ddtn{2}L_{K_t}^{2n}\right|_{t=0}
			&=\frac{1}{\vol(K)^3}	\left.\ddtn{2} \int_{K_{t}}[\norm{x}_2^2-(n+2)] \dint x \right|_{t=0}\\&> \frac{n+3}{\vol(K)^3} \int_{\sphere}   f(\rho_u(0)u)^2 \left[ \rho_u(0)^{2} -(n+2)\right] \rho_u(0)^{n} \dint u.
		\end{align}
		Since $\varepsilon_f=1$, the function $f$ also satisfies the assumptions of Lemma \ref{lemma_f_squared_construction}. For $t \in [0,1]$, let $\tilde{K}_t$ be as in Lemma \ref{lemma_f_squared_construction}. Since $K$ is assumed to be a local maximizer of the isotropic constant, we have $\left.\ddt L_{\tilde{K}_t}^{2n}\right|_{t=0} \leq 0$ and hence, by Proposition \ref{prop_rademacher},
		\begin{align}
			\int_{\sphere}   f(\rho(K,u)u)^2 \left[ \rho(K,u)^{2} -(n+2)\right] \rho(K,u)^{n}  \dint u &= -\left.\ddt \int_{\tilde{K}_t}[\norm{x}_2^2-(n+2)] \dint x \right|_{t=0} \\&=- \vol(K)^3 \left.\ddt L_{\tilde{K}_t}^{2n}\right|_{t=0} \geq 0.
		\end{align}
		Combining this with \eqref{eq_proof_of_thm_minkowski_summands}, it follows that $\left.\ddtn{2}L_{K_t}^{2n}\right|_{t=0}>0$. But this is a contradiction to the assumption that $K$ is a local maximizer.
	\end{proof}
	
	\section{The polytopal case} \label{sect_the_polytopal_case}
	
	In this section, we study the polytopal case of Theorem \ref{thm_minkowski_summands}. Reconsidering the proof of Theorem \ref{thm_minkowski_summands}, we encounter the notion of facewise affine maps and discuss their relation to Minkowski decomposability. At the end of the section, we compare Theorem \ref{thm_minkowski_summands} with Corollary \ref{cor_generalized_RS_movements_dim}.
	
	Let $P \in \Kno$ be a polytope and $F \subset P$ a facet. What can be said about the structure of an arbitrary function $g \in \Lpm(\dotnorm_P)$? By definition, there exists an $\varepsilon>0$ such that
	$\dotnorm_P+ \varepsilon g$ and $\dotnorm_P- \varepsilon g$ are both sublinear. Since $\dotnorm_P$ is affine on $\pos F$, it follows that $\varepsilon g|_{\pos F}$ and $-\varepsilon g|_{\pos F}$ are both convex. In other words, $g|_{\pos F}$ has to be affine. This leads us to the following definition. Here and in the following, we write $\Phi_{n-1}(P)$ for the set of facets of $P$.
	
	\begin{definition}
		Let $P \subset \R^n$ be a polytope. A map $f \colon \bd P \rightarrow \R$ is called \emph{facewise affine} if $f|_F$ is affine for each $F \in \Phi_{n-1}(P)$. We denote the vector space of facewise affine maps on $\bd P$ by $\fwa(P)$.
	\end{definition}
	The considerations above show that the restriction $g|_{\bd P}$ of a function $g \in \Lpm(\dotnorm_P)$ is facewise affine. The converse statement is also true, i.e., if $P\in \Kno$, $\tilde{g} \in \fwa(P)$ and $g\colon \R^n \rightarrow \R$ is the unique positively homogeneous extension of $\tilde{g}$, then $g$ is an element of $\Lpm(\dotnorm_P)$. To see this, we observe that a sufficiently small radial perturbation leaves the vertices of $P$ in convex position. It follows that $\Lpm(\dotnorm_P)$ is isomorphic to $\fwa(P)$, and hence, a solid grasp of Theorem \ref{thm_minkowski_summands} is tantamount to a good understanding of the vector space $\fwa(P)$.
	
	In light of the considerations above, it is no surprise that the space $\fwa(P)$ appears in the study of Minkowski decomposability, notably in a paper by Smilansky \cite{smi87}. Let $V \coloneqq (v_1,v_2,\dots,v_m)$ be an $m$-tuple of vectors in $\R^n$. Following \cite{smi87}, we write
	\begin{equation}
		D(V) \coloneqq \left\{x \in \R^n \MID \textstyle\sum_{i=1}^mx_i v_i =0,\, \sum_{i=1}^mx_i=0\right\}
	\end{equation}
	and $D(P) \coloneqq D(\vertices P)$ for a polytope $P$. If $F \subset P$ is a face, then $D(F)$ is construed as a subspace of $D(P)$. Clearly, a facewise affine map $f \in \fwa(P)$ can be identified with its restriction $f|_{\vertices P}$. The defining constraints of being facewise affine are that the values  $f(v_1),\dots,f(v_m)$ of vertices $v_1,\dots,v_m$ on a single facet are compatible with the affine dependences of $v_1,\dots,v_m$. Combining this insight with the fact that $\fwa(P)$ and $\linh\mathcal{S}(P^\circ)$ are isomorphic, we recover a result by Smilansky \cite[Thm.~4.3]{smi87} (see also \cite[Thm.~11]{mcm73}), which asserts that
	\begin{equation} \label{eq_smilansky}
		\dim \mathcal{S}(P^\circ) = \# \vertices P - \dim \left[\textstyle\sum_{F\in\Phi_{n-1}(P)} D(F)\right].
	\end{equation}
	As remarked in \cite{smi87}, \eqref{eq_smilansky} shows that $\dim\mathcal{S}(P^\circ)$ can be determined by computing the kernel of a matrix $A$ whose rows encode the facetwise affine dependences of vertices of $P$, with linearly dependent constraints omitted. Such matrices have also been studied in the context of \emph{affine rigidity} \cite{zz09,gglt13}, i.e., the study of transformations of a point set $V \subset\R^n$ that preserve the affine dependences of certain subsets of $V$.
	
	Again, let $P \in \Kno$ be a polytope. By the considerations above, we have
	\begin{equation}
		n+1 \leq \dim \mathcal{S}(P^\circ) = \dim \fwa(P) \leq \# \vertices P,
	\end{equation}
	with equality in the second inequality if and only if $P$ is simplicial. The problem of characterizing equality in the lower bound is equivalent to the characterization of decomposable polytopes, which has received substantial attention. For an introduction to the topic, we refer to the survey by Shephard in Grünbaum's book \cite[Ch.~15]{grue03}. A more recent overview is given by Schneider \cite[Sect.~3.2]{sch13}. Some necessary conditions for equality in the lower bound can be stated in combinatorial terms (for example, it is known that $n+1 = \dim \mathcal{S}(P^\circ)$ holds if $P$ is simple), but in general, decomposability is not a combinatorial invariant. This was shown by Kallay \cite{kal82}, who also showed that $\dim\mathcal{S}(P)$ is a projective invariant \cite{kal84}. Therefore, $\dim\mathcal{S}(P^\circ)$ is a projective invariant as well.
	
	With regard to Theorem \ref{thm_minkowski_summands}, the decisive question is of course whether $\dim\mathcal{S}(P^\circ)$ exceeds the threshold value $\frac{n^2+3n}{2}$. If we want to use Theorem \ref{thm_minkowski_summands} to exclude certain combinatorial types of polytopes as potential local maximizers of $K \mapsto L_K$, we need a lower bound on $\dim\mathcal{S}(P^\circ)$ in terms of the combinatorial structure of $P$ or, equivalently, of $P^\circ$. This leads us to consider the following research problem.
	
	\begin{problem} \label{problem_combinatorial_lower_bound}
		Find a sharp lower bound on $\dim\mathcal{S}(Q)$ in terms of the combinatorial structure of a polytope $Q$. Is this lower bound attained by generic realizations of the combinatorial type of $Q$ (for a suitable meaning of the term ``generic'')?
	\end{problem}
	
	A thorough study of Problem \ref{problem_combinatorial_lower_bound} is beyond the scope of this paper. However, we already mentioned in Section \ref{sect_introduction} that the condition $\dim \fwa(P) \leq \frac{n^2+3n}{2}$ puts an upper bound on the number of connected components that we obtain by removing all facets that are simplices from the boundary complex of $P$. In the following, we will shortly pursue this line of thought. To see how simplices in the boundary of a polytope $P \in \Kno$ affect $\dim \fwa(P)$, we interpret the non-simplex facets of $P$ as hyperedges of a hypergraph. More precisely, we consider the hypergraph $H$ with vertex set $V(H) \coloneqq \vertices P$ and hyperedge set
	\begin{equation}
		E(H) \coloneqq \{\vertices F \mid F  \text{ is a facet of $P$, $F$ is not a simplex}\}.
	\end{equation}
	Let $\mathcal{C}(H) \subset 2^{V(H)}$ be the set of connected components of $H$. We define the dimension of a connected component $C \in \mathcal{C}(H)$ to be $0$ if $C$ consists of an isolated vertex, to be $n-1$ if $C$ consists of the vertices of a single facet, and $n$ otherwise. This definition agrees with $\dim \aff(C)$ for every realization of the combinatorial type of $P$. Since every collection of affine maps $f_C \colon \aff(C) \rightarrow \R$, $C \in \mathcal{C}(H)$, gives rise to a facewise affine map $f \in \fwa(P)$ with $f|_C=f_C|_C$ for $C \in \mathcal{C}(H)$, we have
	\begin{equation}
		\dim \fwa (P) \geq \sum_{C \in \mathcal{C}(H)} (\dim C+1).
	\end{equation}
	We recall that Theorem \ref{thm_simplicial_vertex} asserts that the hypergraph $H$ of a local maximizer $P$ of $K \mapsto L_K$ cannot have any isolated vertices unless $P$ is a simplex. Therefore, every connected component of $H$ has dimension $n$. Combining this with the condition $\dim \fwa(P) \leq \frac{n^2+3n}{2}$ from Theorem \ref{thm_minkowski_summands}, we obtain that the number of connected components of $H$ is of order $O(n)$. Arguably, our notion of a connected component is somewhat crude. One shortcoming is that the edge set $E(H)$ includes facets that are pyramids but not simplices, even though such a facet has no affine dependences that involve the apex of the pyramid. A more sophisticated definition of the hypergraph $E(H)$ might exclude as hyperedges all pyramidal faces of $P$, leading to a finer partition of $V$ into connected components. The problem with this approach is that the simple definition given above of the dimension of a connected component no longer works.
	
	\begin{remark}\label{rem_polytopal_case}\leavevmode
		\begin{enumerate}[label=(\roman*)]
			\item Interpreting the facets of a polytope $P \in \Kno$ as hyperedges of a hypergraph, we find ourselves precisely in the setting of the affine rigidity problem as considered in \cite{gglt13}. In \cite{gglt13}, it was shown that affine rigidity is a generic property of a hypergraph $H$, i.e., it is independent of the embedding of $H$ into $\R^n$ as long as the coordinates of the embedded vertices have algebraically independent coordinates. That $\dim \fwa(P)$ is not a combinatorial property of the hypergraph induced by $P$ is due to the fact that the vertices of $P$ cannot have algebraically independent coordinates, except in the trivial case where $P$ is simplicial.
			\item There are further well-studied vector spaces that are isomorphic to $\linh \mathcal{S}(P^\circ)$ and $\fwa (P)$, including the following:
			\begin{enumerate}
				\item the space of parallel redrawings of the (geometric) edge graph of $P^\circ$,
				\item the space of parallel redrawings of the hyperplane arrangement induced by the facets of $P^\circ$.
			\end{enumerate}
			The fact that these vector spaces are isomorphic creates a connection between the topics of Minkowski decomposability, (affine) rigidity and scene analysis; for more information, we refer to \cite{whi89,sw17}.
		\end{enumerate}
	\end{remark}
	
	To conclude this section, we analyze how Theorem \ref{thm_minkowski_summands} relates to Corollary \ref{cor_generalized_RS_movements_dim}. For this, let $P \in \Kno$ be a polytope and let $\beta$ be the speed function of a generalized RS-movement of $P$ in direction $u \in \sphere$. Let $F\subset P$ be a facet. We say that $F$ is \emph{vertical} with respect to $u$ if the outer unit normal vector $v$ of $F$ is orthogonal to $u$. Let $F$ be non-vertical with respect to $u$ with, say, $\scpr{v,u} > 0$. 
	Because $\beta$ is the speed function of a generalized RS-movement, there exists an $\varepsilon>0$ such that $g_{P,u}+\varepsilon \beta^+$ and $g_{P,u}-\varepsilon \beta^+$ are both convex on $\pi_{u^\bot}(F)$. Since $g_{P,u}|_{\pi_{u^\bot}(F)}$ is affine, it follows that $\beta^+|_{\pi_{u^\bot}(F)}$ is affine as well. 
	This shows that the vector space $\GRS(P,u)$ from Definition \ref{def_GRS} is closely related to $\fwa(P)$. 
	To compare the logical strengths of Theorem \ref{thm_minkowski_summands} and Corollary \ref{cor_generalized_RS_movements_dim}, we have to compare the dimensions of $\GRS(P,u)$ and $\fwa(P)$. Here we have to distinguish four cases.
	\begin{enumerate}[label=(\roman*)]
		\item $P$ does not have vertical facets with respect to $u$.
		\begin{enumerate}
			\item We have $\pi_{u^\bot}(P)=P\cap u^\bot$. In this case, the map $\GRS(P,u) \rightarrow \fwa(P)$, $\beta \mapsto \beta|_{\bd P}$ is an isomorphism, and we have $\dim \GRS(P,u) = \dim \mathcal{S}(P^\circ)$.
			\item We have $\pi_{u^\bot}(P)\neq P\cap u^\bot$. In this case, the map $\GRS(P,u) \rightarrow \fwa(P)$, $\beta \mapsto \beta|_{\bd P}$ is well-defined, but not surjective. Therefore, we have $\dim \GRS(P,u) < \dim \mathcal{S}(P^\circ)$.
		\end{enumerate}
		\item $P$ has at least one vertical facet with respect to $u$.
		\begin{enumerate}
			\item We have $\pi_{u^\bot}(P)=P\cap u^\bot$. Here, the map $\fwa(P) \rightarrow \GRS(P,u)$ that interpolates affinely on each chord $P \cap (x+\linh \{u\})$, is well-defined and linear, but possibly not surjective. It follows that $\dim \GRS(P,u) \geq \dim \mathcal{S}(P^\circ)$.
			\item We have $\pi_{u^\bot}(P)\neq P\cap u^\bot$. In this case, there exist facewise affine maps that do not correspond to valid speed functions, and possibly speed functions that are not facewise affine. The situation is indeterminate.
		\end{enumerate}
	\end{enumerate}
	
	\section{Symmetric convex bodies} \label{sect_symmetric_convex_bodies}
	
	In this section, we discuss a variant of Theorem \ref{thm_minkowski_summands} for classes of convex bodies whose symmetry group contains a given subgroup of $\On$. In particular, we discuss the special cases of centrally symmetric, unconditional and 1-symmetric convex bodies.
	
	Let $K \subset \R^n$ be a convex body. An isometry $U \colon \R^n \rightarrow \R^n$ is called a \emph{symmetry} of $K$ if it satisfies $U(K) = K$. The symmetries of $K$ form a group, we call it the \emph{symmetry group} of $K$ and denote it by $G(K)$. We note in passing that some authors use the term ``symmetry group'' for the group of orientation-preserving symmetries of $K$ and call the group $G(K)$ the ``full symmetry group'' of $K$. Every $U \in G(K)$ has $c_K$ as a fixed point. In particular, if $K$ is centered, then all symmetries of $K$ are linear and $G(K)$ is a subgroup of $\On$. For a function $f \colon \R^n \rightarrow \R$, we define a \emph{symmetry} of $f$ to be an isometry $U \colon \R^n \rightarrow \R^n$ with $f \circ U = f$. As in the case of a convex body, the symmetries of a function form a group, which we call the \emph{symmetry group} of $f$ and denote by $G(f)$. Clearly, the symmetry group of a convex body $K$ coincides with the symmetry groups of its characteristic function $\One_K$, its support function $h_K$ and (if $K \in \Kno$) its gauge function $\dotnorm_K$.
	
	\begin{definition}
		Let $G$ be a subgroup of $\On$. We say that a convex body $K \subset \R^n$ or a function $f \colon \R^n \rightarrow \R$ is \emph{$G$-symmetric} if its symmetry group contains $G$. We denote by $\mathcal{K}^n_G \subset \Knn$ the space of full-dimensional $G$-symmetric convex bodies. For $K \in \mathcal{K}_G^n$, we write $\mathcal{S}_G(K)\coloneqq \mathcal{S}(K) \cap \mathcal{K}_G^n$.
	\end{definition}
	
	The results in this section are based on the simple fact that the covariance matrix of a $G$-symmetric convex body $K$ is invariant under conjugation by elements of $G \subset \On$, i.e., that we have
	\begin{equation}
		A_K = A_{U(K)}= U A_K U^\transp \quad \text{for all } U \in G.
	\end{equation}
	
	\begin{remark}
		It is natural to ask whether a more natural concept of symmetry in our setting is given by \emph{affine symmetries} of $K$, i.e., affine isomorphisms $f \colon \R^n \rightarrow \R^n$ with $f(K)=K$, which also form a group under composition. However, if $K$ is isotropic, then every affine symmetry $f$ of $K$ has to be linear, and the fact that the covariance matrix of $K$ is invariant under conjugation by $f$ implies that $f$ is a linear isometry. In other words, for isotropic convex bodies, our notion of symmetry group coincides with the group of affine symmetries of $K$.
	\end{remark}
	
	We recall that the \emph{centralizer} of a subgroup $G$ of the general linear group $\GLnR$ is given by
	\begin{equation}
		{\textstyle\C_{\GLnR}}(G) = \{A \in \GLnR \mid UA = AU \text{ for all } U \in G\}.
	\end{equation}
	Since we are dealing with covariance matrices, which are symmetric, we are interested in the vector space
	\begin{equation}
		V_G \coloneqq {\textstyle\C_{\GLnR}(G)} \cap \Symn.
	\end{equation}
	We also have to keep track of the centroid of $K$. For this, we consider the vector space of fixed points of $G$
	\begin{equation}
		W_G \coloneqq \{x \in \R^n \mid Ux=x \text{ for all } U \in G\}.
	\end{equation}
	The main result of this section asserts that if we restrict our attention to $G$-symmetric convex bodies, then the upper bound for the dimension of decomposability depends on the dimensions of $V_G$ and $W_G$.
	
	\begin{theorem} \label{thm_symmetries}
		Let $G$ be a subgroup of $O(n)$ and let $K \subset \mathcal{K}^n_G$ be a local maximizer of the isotropic constant in the class $\mathcal{K}^n_G$. Then we have
		\begin{equation}
			\dim \mathcal{S}_G(K^\circ) \leq \dim V_G + \dim W_G.
		\end{equation}
	\end{theorem}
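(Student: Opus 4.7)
The plan is to adapt the proof of Theorem \ref{thm_minkowski_summands} by restricting every construction to the $G$-invariant setting and observing that the codomain of the linearized constraint map shrinks from $\Symn \times \R^n$ to $V_G \times W_G$. First, I would establish a $G$-symmetric variant of Lemma \ref{lemma_L_is_span_of_S}: writing $\Lpm_G(\norm{\cdot}_K)$ for the subspace of $G$-invariant functions in $\Lpm(\norm{\cdot}_K)$, the claim is that $\dim \mathcal{S}_G(K^\circ) = \dim \Lpm_G(\norm{\cdot}_K)$. The proof of Lemma \ref{lemma_L_is_span_of_S} transfers line-by-line, using only that if $K^\circ = r_i K_i + L_i$ with $K_i$ and $K^\circ$ both $G$-invariant, then $L_i$ is $G$-invariant by Minkowski cancellation applied to $r_i\,U(K_i) + U(L_i) = U(K^\circ)$ for $U \in G$.

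Next I would reduce to the isotropic case while preserving $G$-symmetry. Since $G \subset \Orthogonal(n)$, we have $U c_K = c_{U(K)} = c_K$ for all $U \in G$, so $c_K \in W_G$; and $U A_K U^\transp = A_{U(K)} = A_K$, so $A_K \in V_G$. Consequently $A_K^{-1/2} \in V_G \subset \C_{\GLnR}(G)$, and the isotropization $x \mapsto A_K^{-1/2}(x-c_K)$ commutes with every $U \in G$, producing an isotropic element of $\mathcal{K}^n_G$ with the same isotropic constant and the same value of $\dim \mathcal{S}_G(K^\circ)$. We may therefore assume $K$ is isotropic.

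For $f \in \Lpm_G(\norm{\cdot}_K)$, Construction \ref{constr_K_t} produces a variation $t \mapsto K_t^f$ that stays in $\mathcal{K}^n_G$ because $\norm{\cdot}_{K_t^f} = \norm{\cdot}_K + t\cdot f$ is $G$-invariant. A change of variables shows that for every $L \in \mathcal{K}^n_G$ the matrix $\int_L xx^\transp \dint x$ lies in $V_G$ and $\int_L x \dint x \in W_G$, so the linear map
\begin{equation*}
	A \colon \Lpm_G(\norm{\cdot}_K) \to V_G \times W_G,\quad f \mapsto \left(\left.\ddt \int_{K_t^f} xx^\transp \dint x\right|_{t=0},\ \left.\ddt \int_{K_t^f} x \dint x\right|_{t=0}\right)
\end{equation*}
is well-defined. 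Under the hypothesis $\dim \mathcal{S}_G(K^\circ) > \dim V_G + \dim W_G$, the kernel of $A$ is non-trivial, yielding a non-zero $f$ whose associated variation satisfies the hypotheses of Proposition \ref{prop_L_K_second_derivative}. From here the argument of Theorem \ref{thm_minkowski_summands} transfers verbatim: Proposition \ref{prop_L_K_second_derivative} and Lemma \ref{lemma_K_t_f_is_an_admissible_variation} give a strict lower bound on $\ddtn{2} L_{K_t^f}^{2n}|_{t=0}$ as an integral over $\sphere$, which Lemma \ref{lemma_f_squared_construction} realizes as $-\ddt L_{\tilde K_t}^{2n}|_{t=0}$ for an auxiliary family $\tilde K_t$. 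Since $\norm{\cdot}_K + t\cdot f^2$ is $G$-invariant, $\tilde K_t$ also lies in $\mathcal{K}^n_G$, so local maximality of $K$ within the restricted class forces $\ddt L_{\tilde K_t}^{2n}|_{t=0} \leq 0$, and we conclude $\ddtn{2} L_{K_t^f}^{2n}|_{t=0} > 0$, contradicting the local maximum.

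The main obstacle is purely the bookkeeping sketched above: one has to verify that Construction \ref{constr_K_t}, the isotropization, and the auxiliary family in Lemma \ref{lemma_f_squared_construction} all remain inside $\mathcal{K}^n_G$, so that local maximality of $K$ within this restricted class is genuinely applicable. Since $G$-invariance is preserved under Minkowski combinations of $G$-invariant sublinear functions, radial perturbations by $G$-invariant positively homogeneous functions, and conjugation by elements of $\C_{\GLnR}(G)$, none of these verifications is hard; the essential geometric and analytic content is already contained in Section \ref{sect_minkowski_decomposability}.
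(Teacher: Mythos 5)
Your proof is correct and follows essentially the same route as the paper: restrict to $G$-invariant elements of $\Lpm(\norm{\cdot}_K)$, reuse Construction \ref{constr_K_t} and Lemma \ref{lemma_f_squared_construction}, and shrink the codomain of the linearized constraint map to $V_G\times W_G$. One small difference worth noting: the paper passes from $K$ to an isotropic image by replacing $G$ with the conjugate $MGM^{-1}$, asserting this leaves the relevant dimensions unchanged; your observation that $A_K\in V_G$ forces $A_K^{-1/2}$ to commute with every $U\in G$, so that the isotropization $x\mapsto A_K^{-1/2}(x-c_K)$ stays inside $\mathcal{K}_G^n$ with the same $G$, is a cleaner (and arguably tighter) way to justify that step.
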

	
	\begin{proof}
		The proof is essentially a verbatim repetition of the arguments from Section \ref{sect_minkowski_decomposability}. Defining $\Lpm_G(f)$ as the set that contains the $G$-symmetric elements of $\Lpm(f)$, we have $\dim \mathcal{S}_G(K^\circ)=\Lpm_G(\dotnorm_K)$. For $f \in \Lpm_G(\dotnorm_K)$, the one-parameter families $K_t^f$ and $\tilde{K}_t^f$ from Construction \ref{constr_K_t} and Lemma \ref{lemma_f_squared_construction} are both contained in $\mathcal{K}^n_G$. Finally, repeating the proof of Theorem \ref{thm_minkowski_summands}, we first note that we can still assume without loss of generality that $K$ is isotropic, even though $\mathcal{K}_G^n$ might not contain an isotropic image of $K$. Indeed, if $M\in \GLnR$ has the property that $\tilde{K}\coloneqq M(K-c_{K})$ is isotropic, then for every $U \in G$ we have
		\begin{equation}
			MUM^{-1} \tilde{K}=MUM^{-1}M(K-c_K)=M(K-c_K)=\tilde{K}.
		\end{equation}
		In other words, we have $MGM^{-1}\subset G(\tilde{K})$, and we can replace $K$ by $\tilde{K}$ and $G$ by $MGM^{-1}$ in the proof without affecting the dimensions of the vector spaces under consideration. Next, we observe that the image of the map
		\begin{equation}
			A \colon \Lpm_G(\dotnorm_K) \mapsto \Symn \times \R^n, \quad A(f) \coloneqq \left(\left[\left.\ddt \int_{K^f_t} x_i x_j \dint x\right|_{t=0}\right]_{i,j=1}^n,\left[\left.\ddt \int_{K^f_t} x_i \dint x\right|_{t=0}\right]_{i=1}^n\right)
		\end{equation}
		is contained in $V_G \times W_G$. Therefore, the assumption $\dim \Lpm_G(\dotnorm_K) > \dim V_G + \dim W_G$ is sufficient to derive that $A$ has a non-trivial kernel, which yields a contradiction.
	\end{proof}
	
	In the following, we spell out the implications of Theorem \ref{thm_symmetries} for three nested classes of convex bodies. The first example we consider is given by $o$-symmetric convex bodies. In this case, the group $G$ is generated by $-\id$, and we have $V_G= \Symn$ and $W_G=\{0\}$.
	
	\begin{corollary}
		Let $K \subset \R^n$ be an $o$-symmetric full-dimensional convex body that is a local maximizer of $K \mapsto L_K$ in the class of centrally symmetric convex bodies. Then
		\begin{equation}
			\dim \mathcal{S}_{\langle-\id\rangle}(K^\circ) \leq \frac{n^2+n}{2}.
		\end{equation}
	\end{corollary}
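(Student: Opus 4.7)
The plan is to apply Theorem \ref{thm_symmetries} directly with $G = \langle -\id \rangle$, the subgroup of $\On$ consisting of $\id$ and $-\id$. Since any centrally symmetric convex body $K$ has $-\id \in G(K)$, the class of $o$-symmetric full-dimensional convex bodies is precisely $\mathcal{K}^n_G$ in this case, and a local maximizer of $K \mapsto L_K$ among centrally symmetric bodies is a local maximizer in $\mathcal{K}^n_G$. Thus the statement reduces to computing $\dim V_G$ and $\dim W_G$ for this particular $G$.

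First I would compute $W_G = \{x \in \R^n \mid -x = x\} = \{0\}$, so $\dim W_G = 0$. Next, for $V_G$, I observe that $-\id$ commutes with every matrix in $\GLnR$, so the centralizer condition $UA = AU$ for $U \in G$ is trivially satisfied by every $A \in \Symn$. Hence $V_G = \Symn$ and $\dim V_G = \tfrac{n^2+n}{2}$. Plugging these values into the inequality from Theorem \ref{thm_symmetries} yields
\begin{equation*}
    \dim \mathcal{S}_{\langle -\id \rangle}(K^\circ) \leq \dim V_G + \dim W_G = \frac{n^2+n}{2},
\end{equation*}
which is exactly the claim.

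There is essentially no obstacle here: the corollary is an immediate specialization of Theorem \ref{thm_symmetries}, and the only content is the trivial computation that the centralizer of $\{\pm \id\}$ within $\Symn$ is all of $\Symn$ while the fixed-point space of $-\id$ is trivial. The geometric interpretation worth remarking on is that the symmetry condition eliminates exactly the $n$ degrees of freedom coming from the centroid constraint (since centrally symmetric bodies are automatically centered), which accounts for the improvement by $n$ compared to the bound $\tfrac{n^2+3n}{2}$ of Theorem \ref{thm_minkowski_summands}.
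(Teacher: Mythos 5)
Your proposal is correct and matches the paper's reasoning exactly: the paper derives the corollary by applying Theorem \ref{thm_symmetries} with $G=\langle -\id\rangle$ and noting that $V_G=\Symn$ (since $-\id$ is central in $\GLnR$) and $W_G=\{0\}$, giving $\dim V_G+\dim W_G=\frac{n^2+n}{2}$. Your closing remark that the improvement of $n$ over Theorem \ref{thm_minkowski_summands} reflects the automatic centering of $o$-symmetric bodies is a nice observation, though not part of the paper's argument.
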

	
	For $i \in \{1,\dots,n\}$, let $R_i$ denote the reflection across the $i$-th coordinate hyperplane. A convex body that is invariant under $R_i$ for all $i \in \{1,\dots,n\}$ is called \emph{unconditional}. If $G={\langle R_1,\dots,R_n\rangle}$ is generated by the reflections across the coordinate hyperplanes, then again we have $W_G=\{0\}$, but this time the vector space $V_G$ only consists of diagonal matrices. We obtain the following necessary condition for local maximizers of $K \mapsto L_K$ in the class of unconditional bodies.
	
	\begin{corollary}
		Let $K \subset \R^n$ be an unconditional full-dimensional convex body that is a local maximizer of $K \mapsto L_K$ in the class of unconditional convex bodies. Then
		\begin{equation}
			\dim \mathcal{S}_{\langle R_1,\dots,R_n\rangle}(K^\circ) \leq n.
		\end{equation}
	\end{corollary}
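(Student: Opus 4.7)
My proof plan is to apply Theorem~\ref{thm_symmetries} directly with $G = \langle R_1,\dots,R_n\rangle$, the group generated by the reflections across the coordinate hyperplanes. The work therefore reduces to computing $\dim V_G$ and $\dim W_G$ for this specific $G$ and observing that their sum equals $n$.

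For $W_G$, I would observe that a vector $x \in \R^n$ is fixed by $R_i$ precisely when $x_i = 0$, so requiring invariance under every $R_1,\dots,R_n$ forces $x = 0$. Hence $W_G = \{0\}$ and $\dim W_G = 0$.

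For $V_G$, I would show that the commutant condition forces diagonality. Writing out $R_i A R_i = A$ entry-wise, the off-diagonal entries in row and column $i$ flip sign, so $A_{ij} = -A_{ij}$ for $j \neq i$, forcing $A_{ij} = 0$. Requiring this for every $i \in [n]$ eliminates all off-diagonal entries, and conversely every diagonal matrix commutes with each $R_i$ and is symmetric. Thus $V_G$ equals the space of diagonal matrices and $\dim V_G = n$.

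Plugging $\dim V_G + \dim W_G = n$ into Theorem~\ref{thm_symmetries} yields the claimed bound. There is no real obstacle here beyond correctly identifying $V_G$ and $W_G$; the heavy lifting (Construction~\ref{constr_K_t}, Proposition~\ref{prop_L_K_second_derivative}, and the variation argument that $K^f_t$ and $\tilde{K}^f_t$ remain in $\mathcal{K}^n_G$ when $f$ is $G$-symmetric) has already been absorbed into Theorem~\ref{thm_symmetries} in the preceding subsection, and the corollary is simply a direct specialization.
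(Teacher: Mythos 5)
Your proposal is correct and matches the paper's approach exactly: the corollary is proved by specializing Theorem~\ref{thm_symmetries} to $G=\langle R_1,\dots,R_n\rangle$ and computing $W_G=\{0\}$ and $V_G = \{\text{diagonal matrices}\}$, giving $\dim V_G + \dim W_G = n+0 = n$. Your entry-wise verification of both facts is the standard computation the paper leaves implicit.
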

	
	Finally, let $G_1 \subset \On$ be the group that is generated by $R_1,\dots,R_n$ and by all permutations of coordinates of $\R^n$. A $G_1$-symmetric convex body is called \emph{1-symmetric}. Since $V_{G_1}= \linh \{I_n\}$ and $W_G=\{0\}$, Theorem \ref{thm_symmetries} implies that a local maximizer of $K \mapsto L_K$ in the class of 1-symmetric convex bodies cannot allow any non-trivial decompositions of its gauge function. Again, we record this fact as a corollary.
	
	\begin{corollary}
		Let $K \subset \R^n$ be a 1-symmetric full-dimensional convex body that is a local maximizer of $K \mapsto L_K$ in the class of 1-symmetric convex bodies. Then $K$ is indecomposable in the class of 1-symmetric convex bodies, i.e., we have
		\begin{equation}
			\dim \mathcal{S}_{G_1}(K^\circ) =1.
		\end{equation}
	\end{corollary}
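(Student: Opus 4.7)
The plan is to derive the corollary as a direct application of Theorem \ref{thm_symmetries} by computing the dimensions of $V_{G_1}$ and $W_{G_1}$ explicitly and then noting that the lower bound $\dim \mathcal{S}_{G_1}(K^\circ)\geq 1$ is automatic.

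First I would compute $W_{G_1}$. Since $R_i$ acts on $\R^n$ by negating the $i$-th coordinate, any vector $x\in W_{G_1}$ must satisfy $x_i=-x_i$ for every $i\in[n]$, so $W_{G_1}=\{0\}$ and $\dim W_{G_1}=0$.

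Next I would determine $V_{G_1}$. If $A\in \Symn$ commutes with each reflection $R_i$, a quick entrywise comparison of $A R_i$ and $R_i A$ shows that every off-diagonal entry of $A$ in the $i$-th row or column must vanish; varying $i$ shows that $A$ is diagonal. A diagonal matrix that additionally commutes with the permutation matrices $P_\sigma$ for all $\sigma\in S_n$ must have all diagonal entries equal, so $A\in \linh\{I_n\}$. Conversely, $I_n$ obviously commutes with every element of $G_1$, so $V_{G_1}=\linh\{I_n\}$ and $\dim V_{G_1}=1$.

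Applying Theorem \ref{thm_symmetries} to $G=G_1$ then yields
\begin{equation}
    \dim \mathcal{S}_{G_1}(K^\circ)\leq \dim V_{G_1}+\dim W_{G_1}=1+0=1.
\end{equation}
For the matching lower bound I would simply note that $K^\circ$ itself is a $G_1$-symmetric Minkowski summand of $K^\circ$ (it equals $K^\circ + \{0\}$), so the cone $\mathcal{S}_{G_1}(K^\circ)$ contains a nonzero ray and hence has dimension at least $1$. Combining the two bounds gives the claimed equality. No step in the argument looks like a serious obstacle; the only thing that requires verification beyond invoking Theorem \ref{thm_symmetries} is the centralizer computation, and that reduces to the standard fact that the commutant of the signed-permutation representation on $\R^n$ consists of the scalar matrices.
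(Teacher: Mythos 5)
Your proposal is correct and follows the same route as the paper: invoke Theorem \ref{thm_symmetries} with $G=G_1$, note $W_{G_1}=\{0\}$ and $V_{G_1}=\linh\{I_n\}$ (a standard commutant computation for the signed-permutation representation), and observe that $\dim\mathcal{S}_{G_1}(K^\circ)\geq 1$ trivially. The paper states the centralizer facts without proof; you have merely filled in that routine verification.
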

	
	\section{Concluding remarks} \label{sect_concluding_remarks}
	
	Considering our main result Theorem \ref{thm_minkowski_summands}, it is natural to ask whether the bound on $\dim \mathcal{S}(K^\circ)$ can be improved. As discussed in the introduction, not much is known about local maximizers of the isotropic constant. The only well-understood setting seems to be the planar case. Here it is known that triangles are the only local maximizers, as shown by Saroglou \cite{sar10}. In other words, every local maximizer $K \in \mathcal{K}_{o}^2$ of $K\mapsto L_K$ satisfies
	\begin{equation}
		\dim\mathcal{S}(K^\circ) = 3 < 5 =\frac{2^2+3\cdot 2}{2}.
	\end{equation}
	In this sense, Theorem \ref{thm_minkowski_summands} already fails to be optimal in the plane. In dimensions $n \geq 3$, no local maximizers seem to be known (in particular, it seems to be unknown whether simplices are actually local maximizers). Therefore, it is conceivable that every local maximizer $K \in \Kno$ of $K\mapsto L_K$ has an indecomposable polar body $K^\circ$, i.e., it satisfies
	\begin{equation}\label{eq_improved_bound}
		\dim\mathcal{S}(K^\circ) = n+1 < \frac{n^2+3n}{2}.
	\end{equation}
	We note in passing that \eqref{eq_improved_bound} would imply Theorem \ref{thm_simplicial_vertex} as well as the analogous result about cubical zonotopes. Clearly, the indecomposability of $K^\circ$ would follow if the map $K \mapsto L_{K^\circ}$ were quasi-convex with respect to Minkowski addition. But this is false; a counter-example is given by two non-homothetic ellipsoids $E_1$, $E_2$, for which we have
	\begin{equation}
		L_{(\frac12(E_1+E_2))^\circ}>L_{E_1}=L_{E_2}.
	\end{equation}%
	We note, however, that the failure of $K \mapsto L_{K^\circ}$ to be quasi-convex does not imply the existence of decomposable local maximizers -- indeed, such local maximizers do not exist in the planar case even though $K \mapsto L_{K^\circ}$ fails to be quasi-convex in this setting. %
	In higher dimensions, it is of course conceivable that there exist decomposable local maximizers -- for all we know, such local maximizers might even have dimensions of decomposability of order $\Omega(n^2)$.
	
	Aside from quantitative improvements to Theorem \ref{thm_minkowski_summands}, a natural line of further thought is to look for analogues in terms of other notions of decomposability, the most obvious candidate being the Minkowski decomposability of $K$. It is by no means clear how the proof strategy from Section \ref{sect_minkowski_decomposability} might be adapted to this problem, and the representation in terms of support functions seems to be less suited for the task of computing the integrals at hand. As in the polar case, it is natural to ask whether $K \mapsto L_{K}$ might be quasi-convex with respect to Minkowski addition, and again, the answer is negative, as witnessed by the example of two non-homothetic ellipsoids $E_1$, $E_2$. %
	Nevertheless, it is an interesting question whether local maximizers of $K\mapsto L_K$ can be Minkowski decomposable, and if so, what the maximal dimension of decomposability might be.
	
	\subsection*{Acknowledgments}
	
	The research was done at TU Berlin. Funded by the Deutsche Forschungsgemeinschaft (DFG, German Research Foundation) under Germany's Excellence Strategy – The Berlin Mathematics Research Center MATH+ (EXC-2046/1, project ID: 390685689). I am indebted to Ansgar Freyer, Emanuel Milman and the anonymous reviewers for their careful reading of this manuscript and many helpful comments.%
	
	\subsection*{Data availability}
	
	No data was used for the research described in the article.

	\printbibliography
\end{document}